\newtheorem{theorem}{Theorem}
\newtheorem{corollary}[theorem]{Corollary}
\newtheorem{lemma}[theorem]{Lemma}
\theoremstyle{definition}
\newtheorem{remark}[theorem]{Remark}
\newtheorem{definition}[theorem]{Definition}
\newcommand{\CC}{{\mathbb C}}
\newcommand{\rank}{{\mbox{\rm rank\,}}}
\newcommand{\diag}{{\mbox{\rm diag}}}
\newcommand{\odd}{\text{\tiny $\mathcal{O}$}}
\newenvironment{smat}{\left[\begin{smallmatrix}}{\end{smallmatrix}\right]}
\title{The equation $X^\top AX=B$ with $B$ skew-symmetric: How much of a bilinear form is skew-symmetric?
}\author{Alberto Borobia\thanks{Departamento de Matem\'{a}ticas, Universidad Nacional de Educaci\'on a Distancia (UNED). C/ Senda del Rey, 9, Madrid 28040, Spain. email: {\tt aborobia@mat.uned.es} ORCiD number: 0000-0001-7034-2426}, Roberto Canogar\thanks{Departamento de Matem\'{a}ticas, Universidad Nacional de Educaci\'on a Distancia (UNED). C/ Senda del Rey, 9, Madrid 28040, Spain. email: {\tt rcanogar@mat.uned.es}. ORCiD number: 0000-0002-6952-9311}, and Fernando De Ter\'an\thanks{Departamento de Matem\'{a}ticas, Universidad Carlos III de Madrid. Avda. Universidad 30, 28911, Legan\'es, Spain. email: {\tt fteran@math.uc3m.es}. ORCiD number: 0000-0003-3888-3702. {\bf Corresponding author}.}}
\date{}
\begin{document}

\maketitle

\begin{abstract}
Given a bilinear form on $\CC^n$, represented by a matrix $A\in\CC^{n\times n}$, the problem of finding the largest dimension of a subspace of $\CC^n$ such that the restriction of $A$ to this subspace is a non-degenerate skew-symmetric bilinear form is equivalent to finding the size of the largest invertible skew-symmetric matrix $B$ such that the equation $X^\top AX=B$ is consistent (here $X^\top$ denotes the transpose of the matrix $X$). In this paper, we provide a characterization, by means of a necessary and sufficient condition, for the matrix equation $X^\top AX=B$ to be consistent when $B$ is a skew-symmetric matrix. This condition is valid for most matrices $A\in\mathbb C^{n\times n}$. To be precise, the condition depends on the canonical form for congruence (CFC) of the matrix $A$, which is a direct sum of blocks of three types. The condition is valid for all matrices $A$ except those whose CFC contains blocks, of one of the types, with size smaller than $3$. However, we show that the condition is necessary for all matrices $A$. 
   \end{abstract}
   
\noindent{\bf Keywords.} Matrix equation, consistency, transpose, congruence, Canonical Form for Congruence, skew-symmetric matrix, bilinear form.

\noindent{\bf AMS Subject Classification.} 15A21, 15A24, 15A63.

\section{Introduction}
Let $A\in\CC^{n\times n}$ represent a bilinear form over $\CC^n$, denoted by ${\mathbb A}:\CC^n\times \CC^n\rightarrow\CC$, and let ${\cal V}$ be an $m$-dimensional subspace of $\CC^n$ (with $m\leq n$). Let $X\in\CC^{n\times m}$ be a matrix whose columns are a basis of ${\cal V}$. Then, the restriction of the bilinear form ${\mathbb A}$ to ${\cal V}$ is non-degenerate and skew-symmetric if and only if the matrix $B:=X^\top AX$ is skew-symmetric and invertible. To see this, consider any pair of vectors of ${\cal V}$, namely $w_1=Xv_1$ and $w_2=Xv_2$, and note that $$
\begin{array}{ccccc}
{\mathbb A}(w_1,w_2)&=&w_1^\top Aw_2&=&v_1^\top Bv_2,\\
{\mathbb A}(w_2,w_1)&=&w_2^\top Aw_1&=&v_2^\top Bv_1,
\end{array}
$$
so if $B^\top=-B$, then ${\mathbb A}(w_1,w_2)=-{\mathbb A}(w_2,w_1)$.

Therefore, the question on determining whether there is an $m$-dimensional subspace of $\CC^n$ such that the restriction of the bilinear form ${\mathbb A}$ to this subspace is non-degenerate and skew-symmetric is equivalent to determining whether there is some $X\in\CC^{n\times m}$ such that $X^\top AX$ is invertible and skew-symmetric. This motivates the interest in analyzing the consistency of the matrix equation
\begin{equation}\label{maineq}
X^\top AX=B,
\end{equation}
with $B$ being invertible and skew-symmetric (and $X$ being the unknown). The main result we obtain in this work is Theorem \ref{sufficiency.th}, that can be roughly stated in the following way:
\begin{quote}
    For most matrices $A\in\CC^{m\times m}$, the equation $X^\top A X=B$, with $B$ skew-symmetric, is consistent if and only if $\rank B\leq 2\rho(A),$
\end{quote}
where $\rho(A)$ is introduced in Definition \ref{components_CFC_def} and depends on the {\em canonical form for congruence} (CFC) of $A$. The statement of Theorem \ref{sufficiency.th} includes a precise description of the meaning of the sentence `for most matrices' (in particular, it refers to those matrices whose 
CFC does not contain some specific blocks). Our second main result is Theorem \ref{necessary_coro}, where we show that the condition $\rank B\leq2\rho(A)$ is a necessary condition for $X^\top AX=B$ to be consistent when $A$ is an arbitrary matrix (without any exceptions), and $B$ is skew-symmetric.

The matrix equation \eqref{maineq} has been of interest since, at least, the 1950's (see, for instance \cite{carlitz1954} and the references in \cite{bcd}). Most of the references on Eq. \eqref{maineq} deal with particular instances of this equation, like the coefficient matrices $A$ and $B$ having some particular structure (symmetric, skew-symmetric, alternating, or bounded rank), and many of them (in particular, the oldest ones) deal with matrices over finite fields. When $A=B$ are complex and invertible, the solutions of Eq. \eqref{maineq} were provided in \cite{wedderburn1921} using the exponential of a matrix. However, up to our knowledge, the question on the consistency of Eq. \eqref{maineq} in the general setting when $A$ is an arbitrary complex matrix had not been addressed until the recent reference \cite{bcd}.  

It is important to emphasize that, if $A$ is skew-symmetric and Eq. \eqref{maineq} is consistent, then $B$ must be skew-symmetric as well. However, the converse is not necessarily true. An elementary counterexample is $A=I_2$ (where $I_k$ stands for the $k\times k$ identity matrix), $B=[0]$ and $X=\left[\begin{smallmatrix}1\\{\mathfrak i}\end{smallmatrix}\right]$ (where $\mathfrak i$ stands for the imaginary unit). 

The present work is a follow-up of \cite{bcd}, where we addressed the solvability of Eq. \eqref{maineq} for the matrix $B$ in the right-hand side being symmetric. The main techniques used in this paper are essentially the ones in \cite{bcd}. In particular, the starting technique is to reduce Eq. \eqref{maineq} to the case where $A$ and $B$ are in CFC, which is a direct sum of blocks (matrices) of three different types (see Theorem \ref{cfc_th}), and then analyzing the equation for $A$ being a direct sum of blocks of just one type, for each of the three types. However, and even though the main results very much resemble the ones in \cite{bcd}, in order to obtain them we have had to spend a considerable effort. This effort has been mainly devoted to solve the corresponding equations that arise in the CFC block-wise decomposition, that have a different right-hand side when $B$ is skew-symmetric compared to the symmetric case. More precisely, when $B$ is symmetric, after reducing to the CFC, the right-hand side is of the form $\left[\begin{smallmatrix}I_k&\\&0_\ell\end{smallmatrix}\right]$ (where $0_\ell$ stands for the null $\ell\times\ell$ matrix). However, when $B$ is skew-symmetric, the CFC of $B$ is of the form $\diag(\left[\begin{smallmatrix}0&1\\-1&0\end{smallmatrix}\right]^{\oplus k},0_\ell)$, where $M^{\oplus k}$ stands for a direct sum of $k$ blocks equal to $M$. A first relevant difference arises, namely: in the case of $B$ being skew-symmetric, the nonzero canonical blocks have size $2\times 2$, whereas, in the symmetric case, they have size $1\times 1$ (they are just blocks of the form $\Gamma_1=1$, see Theorem \ref{cfc_th}). This establishes an additional difficulty in the skew-symmetric case, that forces us to carefully analyze the case where the matrix $A$ in the left-hand side is a direct sum of blocks of each type, and to carry out a relevant effort when plugging in blocks of different types in order to properly accommodate all pieces of the puzzle to get a result valid for any matrix $A$. However, in the case of $B$ being symmetric, it suffices to analyze independently the case where $A$ is just a single block of each type, and from these cases it is possible to plug in any direct sum of blocks (of different types) in a simpler way. Summarizing, obtaining the results in the present paper is far from being a straightforward development of the work carried out in \cite{bcd}.

Among the references mentioned at the beginning, we want to emphasize the early work \cite{carlitz1954-skew}, which addresses a closely related problem to the one in the present paper. In particular, it is obtained in that reference the number of solutions of Eq. \eqref{maineq} where both $A$ and $B$ are skew-symmetric matrices with entries in a finite field. The approach followed in that reference also starts by assuming that both $A$ and $B$ are nonsingular and reducing them to their canonical form for congruence, which is, in both cases, a direct sum of blocks $\left[\begin{smallmatrix}0&1\\-1&0\end{smallmatrix}\right]$, as mentioned above. 

Another related reference to the present work is \cite{ikramov14}, where the solvability of $X^\top DX+AX+X^\top B+C=0$ is considered, with $A,B,C,D$ being complex square matrices of the same size. The interest in equation \eqref{maineq}, or some related equations, has recently increased, mostly related to applications. In particular, the equation $X^*AX=B$, with $A,B$ being skew-Hermitian with zero trace, and $X$ invertible, has been considered quite recently in \cite{benzi-viviani}, whereas the equation $DX+X^\top A-X^\top BX+C=0$, with $A,B,C,D\in\mathbb R^{n\times n}$, has been addressed in \cite{bimp,benner-palitta}.

We want to emphasize that we are not imposing any restriction on the solution $X$. In particular, $X$ is not necessarily square and invertible and, actually, the interesting case is, precisely, the one where it is not invertible, since there is an invertible solution to Eq. \eqref{maineq} if and only if $A$ and $B$ have the same CFC (that is, they are {\em congruent}). 

The paper is organized as follows. In Section \ref{basic.sec} we introduce the basic notation and tools that are used throughout the manuscript, and in Section \ref{rho_sec} we introduce the quantity $\rho(A)$, for an arbitrary matrix $A$, which is key in the developments and the characterization for consistency of Eq. \eqref{maineq}. In Section \ref{necessary_sec} we present a necessary condition for Eq. \eqref{maineq} to be consistent when $B$ is skew-symmetric, valid for an arbitrary matrix $A$. Sections \ref{JntoB_sec} and \ref{typeII_sec} are devoted to prove that this condition is also sufficient when the CFC of $A$ contains only blocks of either  Type-0 or Type-II, respectively. As for the Type-I blocks, we prove in Section \ref{typeI_sec} that, if the CFC of $A$ contains only Type-I blocks with size at least $3$, then the condition is sufficient as well. Actually, we also show that, when Type-I blocks with size either $1\times 1$ or $2\times2$ appear, the condition is not sufficient anymore. By putting all pieces from Sections \ref{JntoB_sec}, \ref{typeII_sec}, and \ref{typeI_sec} together, we prove, in Section \ref{sufficiency_sec}, that the condition obtained in Section \ref{necessary_sec} is sufficient when $A$ is a matrix whose CFC does not contain Type-I blocks with either size $1\times1$ or $2\times2$ (but it can be any direct sum, with size $n\times n$, of any other blocks). In Section \ref{generic.sec} we particularize the condition for the solution of Eq. \eqref{maineq} to the case of a generic matrix $A$ (namely, the one having a generic CFC), and we obtain a characterization for Eq. \eqref{maineq} to be consistent when $B$ is skew-symmetric for most matrices. Finally, Section \ref{conclusion.sec} presents the main conclusions of this work, and some lines of further research.

\section{Basic setting}\label{basic.sec}

As we have mentioned in the Introduction, the Canonical Form for Congruence (CFC) of matrices plays a central role in this work. In order to recall the CFC we first need to introduce the following matrices:
$$
J_k(\lambda):=\left[\begin{array}{c@{\mskip8mu}c@{\mskip8mu}c@{\mskip8mu}c}
\lambda&1\\[-4pt]&\ddots&\ddots\\[-4pt]&&\lambda&1\\[-2pt]&&&\lambda\end{array}\right]
$$
is a $k\times k$ {\em Jordan block associated with
$\lambda\in\CC$}; for each $k\geq1$, let $\Gamma_k$ be the $k\times k$ matrix
$$
\Gamma_k:=\left[\begin{array}{c@{\mskip8mu}c@{\mskip8mu}c@{\mskip8mu}c@{\mskip8mu}c@{\mskip8mu}c}0&&&&&(-1)^{k+1}\\[-4pt]
&&&&\iddots&(-1)^k\\[-4pt]&&&-1&\iddots&\\&&1&1&&\\&-1&-1&&&\\1&1&&&&0\end{array}\right]\
\qquad (\Gamma_1=[1]);
$$
and, for each $\lambda\in\CC$ and each $k\geq1$, $H_{2k}(\lambda)$ is the $2k\times2k$ matrix
$$
    H_{2k}(\lambda):=\begin{bmatrix}
    0&I_k\\
     J_k(\lambda)&0
    \end{bmatrix}.
$$
Note, in particular, that $H_2(-1)=\left[\begin{smallmatrix}0&1\\-1&0\end{smallmatrix}\right]$. This block will be key in our developments, since the CFC of any invertible skew-symmetric matrix is a direct sum of blocks of this form.

Now, we state the basic tool in our developments, namely the CFC form introduced in \cite{hs2006} (see also \cite{semaj} for some history on this canonical form).

\begin{theorem}\label{cfc_th}{\rm (Canonical form for congruence, CFC) \cite[Th. 1.1]{hs2006}.}
Each square complex matrix is congruent to a direct sum, uniquely determined up to permutation of addends, of canonical matrices of the following three types
\begin{center}
\begin{tabular}{|c|c|}\hline
     Type 0& $J_k(0)$ \\\hline
     Type I& $\Gamma_k$\\\hline
     Type II&\begin{tabular}{c}$H_{2k}(\mu)$,\\ \footnotesize$0 \neq\mu\neq(-1)^{k+1}$\\
\footnotesize($\mu$ is determined up to replacement by $\mu^{-1}$)\end{tabular}\\\hline
\end{tabular}
\end{center}
\end{theorem}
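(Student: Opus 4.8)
The statement is the Horn--Sergeichuk canonical form for congruence, so a self-contained proof would reconstruct the classical two-stage argument: first strip the singular part off $A$ by a \emph{regularization} of the matrix pencil attached to it, and then classify the nonsingular part through its \emph{cosquare}. For Step 1, I would attach to $A$ the pencil $A+\lambda A^{\top}$ and note that congruence acts on it as $A+\lambda A^{\top}\mapsto S^{\top}(A+\lambda A^{\top})S$, i.e. as a strict equivalence constrained by the transpose symmetry. The goal is to prove, by induction on the size and on the left/right minimal (Kronecker) indices of this pencil, that $A$ is congruent to $A_{0}\oplus N$ with $A_{0}$ nonsingular and $N$ a direct sum of nilpotent Jordan blocks $J_{k}(0)$; these are exactly the Type 0 summands, and their sizes are congruence invariants because they are read off from the Kronecker canonical structure of $A+\lambda A^{\top}$, which is a strict-equivalence invariant. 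The delicate point is that the symmetric coupling $A\leftrightarrow A^{\top}$ must be preserved all along the reduction and that the singular Kronecker blocks collapse precisely to nilpotent Jordan blocks of the stated shape rather than to some zoo of small singular summands.

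\textbf{Step 2 (the cosquare).} Assume now $A$ nonsingular and set $C_{A}:=A^{-\top}A$. A one-line computation gives $(S^{\top}AS)^{-\top}(S^{\top}AS)=S^{-1}C_{A}S$, so congruent nonsingular matrices have similar cosquares. The core of the proof is the converse: similar cosquares $\Rightarrow$ congruent. I would establish it by first noting that $C_{A}$ is always similar to $C_{A}^{-1}$ (indeed $C_{A}^{-\top}=AC_{A}A^{-1}$, and every matrix is similar to its transpose), so the Jordan structure of a cosquare is invariant under $\lambda\mapsto\lambda^{-1}$; then one reduces the congruence problem, along the primary decomposition of $C_{A}$, to the indecomposable pieces --- a pair $J_{k}(\mu)\oplus J_{k}(\mu^{-1})$ with $\mu\neq\pm1$, or a lone $J_{k}(\pm1)$ --- and solves each resulting small congruence explicitly, with an accompanying uniqueness check. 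One must also pin down which $M$ occur as cosquares: beyond the $\lambda\mapsto\lambda^{-1}$ symmetry, an eigenvalue $\varepsilon\in\{\pm1\}$ forces a parity condition, namely a lone $J_{k}(\varepsilon)$ is a cosquare block only when $\varepsilon=(-1)^{k+1}$. This is precisely the origin of the exclusion $\mu\neq(-1)^{k+1}$ for Type II and of the sign $(-1)^{k+1}$ appearing in $\Gamma_{k}$.

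\textbf{Step 3 (realization and uniqueness).} For existence it suffices to exhibit canonical matrices realizing every admissible cosquare: a direct computation gives $H_{2k}(\mu)^{-\top}H_{2k}(\mu)=J_{k}(\mu)\oplus J_{k}(\mu)^{-\top}$, which is similar to $J_{k}(\mu)\oplus J_{k}(\mu^{-1})$, and the cosquare of $\Gamma_{k}$ is similar to the single Jordan block $J_{k}((-1)^{k+1})$. Combined with Step 2, this shows every nonsingular $A$ is congruent to a direct sum of $\Gamma_{k}$'s and $H_{2k}(\mu)$'s, and together with Step 1 it yields the full decomposition. Uniqueness then follows: the Type 0 part is fixed by Step 1, while the multiset of $\Gamma_{k}$'s and $H_{2k}(\mu)$'s is fixed by the Jordan form of the cosquare (a similarity invariant) through Steps 2--3, once one checks that the block $\leftrightarrow$ Jordan-data correspondence is injective after imposing the normalizations $\mu\sim\mu^{-1}$ for Type II and $\mu\neq(-1)^{k+1}$; the latter removes the only overlap, since $H_{2k}((-1)^{k+1})$ would otherwise be congruent to $\Gamma_{k}\oplus\Gamma_{k}$.

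\textbf{Main obstacle.} The genuine difficulties are Step 1 (carrying the Kronecker reduction through while respecting the transpose symmetry, and verifying that the singular part reduces exactly to $J_{k}(0)$ blocks) and the converse direction of Step 2 (similar cosquares imply congruent); Step 3 and the uniqueness bookkeeping are then routine. In the present paper, since the result is exactly \cite[Th.~1.1]{hs2006}, the pragmatic choice --- and the one we adopt --- is to invoke it directly and reserve the effort for the equation $X^{\top}AX=B$.
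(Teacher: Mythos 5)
The paper offers no proof of this statement: it is quoted verbatim from \cite[Th.~1.1]{hs2006}, and since your final paragraph likewise invokes that reference directly, you are taking essentially the same approach as the paper. Your sketch of the classical regularization-plus-cosquare argument (including the identities $H_{2k}(\mu)^{-\top}H_{2k}(\mu)=J_k(\mu)\oplus J_k(\mu)^{-\top}$ and the overlap $H_{2k}((-1)^{k+1})$ congruent to $\Gamma_k\oplus\Gamma_k$) is an accurate outline of how the cited result is established, but no such argument is needed or given in this paper.
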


\subsection{Notation}

We will use the following notation in the manuscript:

\begin{enumerate}
    \item Given two complex matrices $A\in\CC^{n\times n}$ and  $B\in\CC^{m\times m}$, and an unknown matrix $X$ of size $n\times m$, then:

\begin{enumerate}[(i)]
    \item $A\rightsquigarrow B$ means that the equation $X^\top A X= B$ is consistent, that is, $X_0^\top A X_0= B$ for some  $X_0\in\CC^{n\times m}$. 

    \item $A \overset{X_0}{\rightsquigarrow} B$ means that $X_0^\top A X_0= B$. In other words, the equation $X^\top A X= B$ is consistent and $X_0$ is a  solution. 

    \item  $A \rightsquigarrow\hspace{-3ex}/ \quad B$ means that the equation $X^\top A X= B$ is not consistent. 
    
\end{enumerate}

\item $A^{\oplus k}=A\oplus \cdots \oplus A$ is the direct sum of  $k$ copies of $A$.

\item Throughout the manuscript, some matrices with null size, like  $A^{\oplus 0}$, $J_0(0)$, $ H_0(\mu)$, $\Gamma_0$, $I_0$, or  $0_{0\times 0}$, will appear. All of them are understood as empty matrices.

\item Sometimes we will write CFC($A$) to denote the CFC of the matrix $A$.

\end{enumerate}

The notation $A\rightsquigarrow B$ emphasizes the fact that the equation $X^\top A X=B$ being consistent establishes a relation between $A$ and $B$ (in this order), for $A\in\CC^{n\times n}$ and $B\in\CC^{m\times m}$.

For brevity, we will sometimes say that a matrix $M$ has {\em size $n$} when $M$ is $n\times n$.

\subsection{The basic laws of consistency} \label{thelaws}

Here we present several basic results that will simplify our further developments. 

\begin{lemma} \label{CFC-Skew_lemma}
\begin{enumerate}[{\rm(i)}]
    \item \label{skewsym_remain}
    If $A$ is a skew-symmetric matrix and  $A \rightsquigarrow B$, then $B$ is skew-symmetric.

    \item \label{CFC-Skew_2}
    If $A$ is an $n\times n$ skew-symmetric matrix, then {\rm CFC}$(A)=H_2(-1)^{\oplus k}\oplus J_1(0)^{\oplus n-2k}$ for some $k\leq n/2$.

\end{enumerate}
\end{lemma}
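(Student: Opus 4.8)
The two statements are essentially independent, so I would treat them separately. For part \eqref{skewsym_remain}, the plan is a one-line computation: if $A^\top=-A$ and $B=X_0^\top AX_0$ for some $X_0$, then $B^\top=(X_0^\top AX_0)^\top=X_0^\top A^\top X_0=-X_0^\top AX_0=-B$, so $B$ is skew-symmetric. No obstacle here; this is purely formal and uses only the definition of $A\rightsquigarrow B$ given in the notation section.

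For part \eqref{CFC-Skew_2}, the plan is to invoke the uniqueness of the CFC (Theorem \ref{cfc_th}) together with the classical fact that a skew-symmetric bilinear form over $\mathbb C$ is congruent to a direct sum of hyperbolic planes and a zero block. Concretely, I would argue as follows. First, recall that congruence preserves skew-symmetry: if $A^\top=-A$ and $C=P^\top AP$ with $P$ invertible, then $C^\top=-C$. Hence CFC$(A)$ is itself skew-symmetric. Now examine which of the three canonical block types can be skew-symmetric, and which direct sums of them are skew-symmetric. A direct sum $M_1\oplus\cdots\oplus M_r$ is skew-symmetric if and only if each summand is, so it suffices to decide which canonical blocks are skew-symmetric: $J_1(0)=[0]$ is (trivially) skew-symmetric; $J_k(0)$ for $k\geq2$ is not, since it has a $1$ just above the diagonal and a $0$ just below; $\Gamma_k$ is never skew-symmetric (its anti-diagonal has entries $\pm1$, which are nonzero and must equal their own negatives for skew-symmetry — already $\Gamma_1=[1]$ fails, and for $k\geq2$ the diagonal entries $1$ appearing in the pattern rule it out); $H_{2k}(\mu)$ is skew-symmetric iff $\left[\begin{smallmatrix}0&I_k\\ J_k(\mu)&0\end{smallmatrix}\right]^\top=\left[\begin{smallmatrix}0&J_k(\mu)^\top\\ I_k&0\end{smallmatrix}\right]$ equals $-\left[\begin{smallmatrix}0&I_k\\ J_k(\mu)&0\end{smallmatrix}\right]$, which forces $J_k(\mu)=-I_k$, i.e.\ $k=1$ and $\mu=-1$; and indeed $H_2(-1)=\left[\begin{smallmatrix}0&1\\-1&0\end{smallmatrix}\right]$ is skew-symmetric. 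Moreover, such an $H_2(-1)$ block is a genuine Type-II block only when $0\neq\mu\neq(-1)^{k+1}$; for $k=1$ this exclusion is $\mu\neq1$, so $\mu=-1$ is allowed, consistent with the remark in the excerpt that the CFC of any invertible skew-symmetric matrix is a direct sum of $H_2(-1)$'s.

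Putting this together: since CFC$(A)$ is skew-symmetric and is a direct sum of canonical blocks, every summand must be one of the skew-symmetric canonical blocks, i.e.\ either $J_1(0)$ or $H_2(-1)$. Hence CFC$(A)=H_2(-1)^{\oplus k}\oplus J_1(0)^{\oplus(n-2k)}$ for some $k$, and counting sizes gives $2k\leq n$, i.e.\ $k\leq n/2$. The only point requiring a little care — and the mildest "obstacle" — is the case analysis showing no other canonical block is skew-symmetric, in particular handling $\Gamma_k$ and $J_k(0)$ for the various parities and sizes; but each case is settled by inspecting a single off-diagonal or diagonal entry of the explicit block, so nothing deep is involved. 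One should also note, for completeness, that such a form genuinely occurs: every matrix of the stated form is clearly skew-symmetric, so the characterization is exact.
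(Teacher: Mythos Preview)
Your proposal is correct and follows essentially the same approach as the paper: part (i) is the identical one-line transpose computation, and part (ii) proceeds exactly as the paper does, by noting that congruence preserves skew-symmetry so CFC$(A)$ is itself skew-symmetric, and then identifying $H_2(-1)$ and $J_1(0)$ as the only skew-symmetric canonical blocks. The paper simply asserts this last fact in one sentence, whereas you spell out the block-by-block case analysis; your argument for $\Gamma_k$ is slightly loosely worded (the phrase ``diagonal entries $1$'' is not quite the right invariant for every $k$), but comparing the $(k,1)$ entry $1$ with the $(1,k)$ entry $(-1)^{k+1}$, and then the $(k,2)$ and $(2,k)$ entries when $k$ is even, disposes of $\Gamma_k$ cleanly.
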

\begin{proof} 
\begin{enumerate}[(i)]
    \item If $A$ is skew-symmetric and $X_0$ is a solution of $X_0^\top A X_0=B$, then $$B^\top=X_0^\top A^\top X_0=-X_0^\top A X_0=-B,$$ so $B$ is also skew-symmetric.
    \item The only skew-symmetric canonical matrices in Theorem~\ref{cfc_th} are $H_2(-1)$ and $J_1(0)$. Since CFC($A$) is congruent to $A$, and $A$ is skew-symmetric, then CFC($A$) is skew-symmetric as well, so it must be a direct sum containing only blocks of these two kinds.
\end{enumerate}
\end{proof}

\begin{remark}\label{evenrank_rem}
A consequence of Lemma \ref{CFC-Skew_lemma} (ii) is the well-known fact that the rank of a skew-symmetric matrix is an even number.
\end{remark}

The following result includes some basic laws of consistency that are already known and/or straightforward to check. 

\begin{lemma} \label{basic_laws_lemma}
\label{directsum.lemma} \label{TransitivityOfConsistency_lem} \label{suma_directa_rightsquigarrows_lem} 
{\bf Laws of consistency:} For any complex square matrices $A,B,C,A_i,B_i$, the following properties hold:
\begin{enumerate}[{\rm(i)}]
    \item {\bf Addition law.} \label{additive_law}
    If $A_i \overset{X_i}{\rightsquigarrow} B_i$, for $1\leq i\leq k$, then  $\bigoplus_{i=1}^{k} A_i \overset{X}{\rightsquigarrow} \bigoplus_{i=1}^{k} B_i$, with $X=\bigoplus_{i=1}^{k} X_i$.
    
    \item {\bf Transitivity law.} {\rm(\cite[Lemma 2.4]{bcd})}. \label{transitive_law}
    If $A \overset{X_0}{\rightsquigarrow} B$ and $B \overset{Y_0}{\rightsquigarrow} C$, then $A \overset{X_0Y_0}{\rightsquigarrow} C$.  
    
\item {\bf Permutation law.} \label{perm_law} $\bigoplus_{i=1}^\ell A_i\rightsquigarrow \bigoplus_{i=1}^\ell A_{\sigma(i)}$, for any permutation $\sigma$ of $\{1,\hdots,\ell\}$.
    
    \item {\bf Elimination law.}\label{elim_law} 
    $A \oplus B \overset{X_0}{\rightsquigarrow} A$, with $X_0=\left[\begin{smallmatrix}I_n\\ 0\end{smallmatrix}\right]$, and where $n$ is the size of $A$.

    \item{\bf Canonical reduction law.}\label{reduction_law} {\rm(\cite[\S 2]{bcd})}. If $A$ and $B$ are congruent to, respectively, $\widetilde A$ and $\widetilde B$, then $A\rightsquigarrow B$ if and only if $\widetilde A\rightsquigarrow\widetilde B$.
    
    \item {\bf $J_1(0)$-law.} {\rm(\cite[Lemma 2.2]{bcd})}. \label{zeroes_law}
    For $k,\ell\geq 0$ we have  $A\oplus J_1(0)^{\oplus k} \rightsquigarrow  B \oplus J_1(0)^{\oplus \ell}$ if and only if  $A \rightsquigarrow  B$.

\end{enumerate}
\end{lemma}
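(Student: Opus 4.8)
The lemma collects six elementary facts; three of them — the Transitivity law~(ii), the Canonical reduction law~(v), and the $J_1(0)$-law~(vi) — are already proved in \cite{bcd}, so for those I would just recall the short arguments, and the remaining three ((i), (iii), (iv)) reduce to a one-line block-matrix computation or to exhibiting an explicit solution. I would take the items in the stated order, disposing of the four immediate verifications first.

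For the \textbf{Addition law} I would use that a block-diagonal matrix transposes and multiplies block by block: with $X=\bigoplus_{i=1}^{k}X_i$ one gets $X^\top\big(\bigoplus_i A_i\big)X=\bigoplus_i X_i^\top A_i X_i=\bigoplus_i B_i$. For the \textbf{Transitivity law}, from $X_0^\top A X_0=B$ and $Y_0^\top B Y_0=C$ one computes $(X_0Y_0)^\top A(X_0Y_0)=Y_0^\top(X_0^\top A X_0)Y_0=Y_0^\top B Y_0=C$. For the \textbf{Permutation law} I would take the orthogonal matrix $P$ that reorders the diagonal blocks according to $\sigma$, so that $P^\top=P^{-1}$ and $P^\top\big(\bigoplus_i A_i\big)P=\bigoplus_i A_{\sigma(i)}$; hence $X_0=P$ is a solution. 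For the \textbf{Elimination law}, multiplying out $\left[\begin{smallmatrix}I_n&0\end{smallmatrix}\right](A\oplus B)\left[\begin{smallmatrix}I_n\\0\end{smallmatrix}\right]=A$ settles it.

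For the \textbf{Canonical reduction law}, write $\widetilde A=P^\top AP$ and $\widetilde B=Q^\top BQ$ with $P,Q$ invertible. If $X_0^\top A X_0=B$, then $\widetilde X_0:=P^{-1}X_0Q$ satisfies $\widetilde X_0^\top\widetilde A\,\widetilde X_0=Q^\top X_0^\top(P^\top)^{-1}(P^\top A P)P^{-1}X_0Q=Q^\top(X_0^\top A X_0)Q=\widetilde B$, and the converse follows by exchanging the roles of $(A,\widetilde A)$ and $(B,\widetilde B)$ (equivalently, the law is a threefold application of the Transitivity law together with the reversibility of congruence). For the \textbf{$J_1(0)$-law}, note that $J_1(0)=[0]$, so each $J_1(0)^{\oplus k}$ is a zero matrix. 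If $A\rightsquigarrow B$, then combining this with the trivial relation $J_1(0)^{\oplus k}\rightsquigarrow J_1(0)^{\oplus\ell}$ (the $k\times\ell$ zero matrix being a solution) via the Addition law gives $A\oplus J_1(0)^{\oplus k}\rightsquigarrow B\oplus J_1(0)^{\oplus\ell}$. Conversely, from $X_0^\top\big(A\oplus J_1(0)^{\oplus k}\big)X_0=B\oplus J_1(0)^{\oplus\ell}$ I would partition $X_0=\left[\begin{smallmatrix}X_1\\X_2\end{smallmatrix}\right]$ conformally with $A\oplus J_1(0)^{\oplus k}$, obtaining $X_1^\top A X_1=B\oplus J_1(0)^{\oplus\ell}$; then the submatrix $X_1'$ formed by the first $m$ columns of $X_1$ (with $m$ the size of $B$) satisfies $(X_1')^\top A X_1'=B$, i.e.\ $A\rightsquigarrow B$.

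I do not expect any real obstacle here: every item is either a direct computation or an explicitly exhibited solution. The only points that deserve a little care are the algebra of the two congruence transformations in the Canonical reduction law and the conformal partitioning and size bookkeeping in the $J_1(0)$-law, both of which become routine once the sizes are written down explicitly.
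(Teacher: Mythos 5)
Your proof is correct, and it matches the paper's treatment: the paper states these laws as known or straightforward (citing \cite{bcd} for the Transitivity, Canonical reduction, and $J_1(0)$ laws) and omits the details, which are exactly the block-matrix computations and explicit solutions you supply. No gaps — the conformal partitioning in your argument for the $J_1(0)$-law and the congruence algebra in the Canonical reduction law are handled correctly.
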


The Addition, Transitivity, Permutation, and Elimination laws (parts \eqref{additive_law}--\eqref{elim_law} of Lemma \ref{basic_laws_lemma}) will be sometimes used without any explicit reference to them. In particular, the Permutation law will allow us to reorder the canonical blocks of the CFC of $A$ without affecting at all the consistency.

Notice that the Canonical reduction law, together with the $J_1(0)$-law of consistency,  guarantee that, when looking for the consistency of Eq.~\eqref{maineq}, there is no loss of generality in assuming that $A$ and $B$ are given in CFC, that $A$ and $B$ have no blocks of type $J_1(0)$, and that $B$ is the direct sum of blocks of type $H_2(-1)$ (which is the CFC of a skew-symmetric invertible matrix). For the sake of completeness, in the main results of this work we have considered that CFC($A$) may contain blocks of type $J_1(0)$. However, in the proofs we will assume that $A$ does not contain blocks of this form, according to the $J_1(0)$-law of consistency.

\section{The quantity $\rho(A)$}\label{rho_sec}

The main result of this work (Theorem \ref{sufficiency.th}) depends on an intrinsic quantity of the matrix $A$, that we denote by $\rho(A)$. In this section, we introduce it and show its main properties in the context of this work.

\begin{definition} \label{components_CFC_def}
Let $A\in\CC^{n\times n}$ whose {\rm CFC} contains, exactly 
\begin{enumerate}[\rm(i)]
\item\label{type01} $j_1$  Type-$0$ blocks with size $1$;

\item  $j_{\odd}$ Type-$0$ blocks with odd size at least 3;

\item  $\gamma_{\varepsilon}$ Type-I blocks with even size;

\item  $h^-_{2\odd}$ Type-II blocks of the form $H_{4k-2}(-1)$, for any $k\geq 1$; and

\item\label{othertypes} an arbitrary number of other Type-$0$ and Type-II blocks.
\end{enumerate}
Then we define the quantity
\begin{equation} \label{rho(A)_formula}
    \rho(A):= \frac{n-j_1+j_{\odd}+\gamma_{\varepsilon}+2h^-_{2\odd}}{4}
\end{equation}
which satisfies the following essential, and straightforward to prove, additive  property:
\begin{equation} \label{additive}\rho(A_1\oplus\cdots \oplus A_k)=\rho(A_1)+\cdots+\rho(A_k),
\end{equation}
for any square complex matrices $A_1,\ldots, A_k$.
\end{definition}

\begin{remark}
Since the quantities in Definition \ref{components_CFC_def} will appear many times throughout the manuscript, we have tried to find an adequate notation to help the reader to identify them. More precisely, the letters for the number of blocks in parts (i)--(vi) recall the notation for the corresponding blocks. In particular, $j$ is the initial lowercase letter of ``Jordan", since Type-0 blocks are Jordan blocks, $\gamma$ is the lowercase of $\Gamma$, which is the letter used for Type-I blocks, and $h$ is also the lowercase of $H$, which is the letter used for Type-II blocks. As for the subindices, $\varepsilon$ stands for ``even" and $\odd$ stands for ``odd".
\end{remark}

\begin{remark} \label{rho_values}
It is convenient to keep in mind the value of $\rho$ for each canonical block: 
\begin{enumerate}[(i)]

\item  $\rho\left(J_1(0)\right)=0$; 

\item  $\rho(J_{2k-1}(0))=\frac{k}{2}$ when   $k\geq 2$;

    \item  $\rho(J_{2k}(0))=\frac{k}{2}$ when   $k\geq 1$;

\item $\rho(\Gamma_{2k-1})=\frac{2k-1}{4}$ when $k\geq 1$;

\item  $\rho(\Gamma_{2k})=\frac{2k+1}{4}$ when $k\geq 1$;

\item $\rho(H_{4k-2}(-1))=k$ when $k\geq 1$;  

\item $\rho(H_{4k}(1))=k$ when $k\geq 1$;

\item $\rho(H_{2k}(\mu))=\frac{k}{2}$ when   $k\geq 1$ and $\mu \neq \pm1,0$.

\end{enumerate}
\end{remark}

\section{A necessary condition for consistency} \label{necessary_sec}

This section is devoted to state and prove the following necessary condition for the consistency of Eq.~(\ref{maineq}) with $B$ skew symmetric and invertible. 

\begin{lemma}\label{necessary_th}
If $A$ is a complex square matrix such that $X^\top AX= H_2(-1)^{\oplus m}$ is consistent, then $m \leq  \rho(A)$.
\end{lemma}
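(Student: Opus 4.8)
The plan is to reduce to the CFC of $A$ and then verify the inequality $m\le\rho(A)$ block by block, exploiting the additivity of $\rho$ in \eqref{additive}. By the Canonical reduction law and the $J_1(0)$-law, I may assume $A$ is in CFC with no $J_1(0)$ blocks, so $A=A_1\oplus\cdots\oplus A_r$ where each $A_i$ is a canonical block of Type 0 (size $\ge2$), Type I, or Type II. The key structural fact I would isolate first is that $H_2(-1)^{\oplus m}$ is skew-symmetric and invertible of rank $2m$, so consistency of $X^\top AX=H_2(-1)^{\oplus m}$ forces $2m=\rank(X^\top AX)\le\rank A$; this already gives the cheap bound $m\le\frac{1}{2}\rank A$, which will handle most blocks but is not tight enough for the blocks singled out in Definition \ref{components_CFC_def} (Type-0 odd-size blocks, even-size Type-I blocks, and $H_{4k-2}(-1)$ blocks), for which $\rho$ is strictly larger than $\frac{1}{4}$ times the size.

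The core of the argument is a \emph{per-block} consistency bound: I claim that if $X^\top A_iX=H_2(-1)^{\oplus m_i}$ is consistent then $m_i\le\rho(A_i)$, and moreover that this "local" statement implies the global one. The implication from local to global is where I expect to spend real care: a single solution $X$ of $X^\top AX=H_2(-1)^{\oplus m}$ does \emph{not} split as a direct sum over the blocks of $A$, so I cannot simply invoke the Addition law in reverse. Instead I would argue via ranks and the block structure of $A$ directly. Write $X=[X_1;\dots;X_r]$ conformally with the block rows of $A$; then $H_2(-1)^{\oplus m}=X^\top AX=\sum_i$ (contributions), and I would bound $m$ by analyzing how much "non-degenerate skew-symmetric content" each block can supply. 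The cleanest route is probably to prove the contrapositive/counting version: show that the bilinear form $\mathbb A$ restricted to any subspace on which it is non-degenerate skew-symmetric has dimension at most $2\rho(A)$, by decomposing a putative $2m$-dimensional symplectic subspace using the radical filtration coming from the CFC of $A$, and accounting for each canonical summand separately. For each individual canonical block $A_i$ one then needs the sharp estimate, which for Type-II blocks $H_{2k}(\mu)$ with $\mu\ne\pm1,0$ and for Type-0 blocks comes down to a rank computation (matching $\rho=k/2$ with the bound $\frac14\,\mathrm{size}$), while for $\Gamma_k$, for $J_{2k-1}(0)$ with $k\ge2$, and for $H_{4k-2}(-1)$ one needs a slightly finer argument showing that the relevant alternating part of the form carried by that block has rank at most $2\rho(A_i)$ even though the block itself has larger rank.

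Concretely, the steps would be: (1) reduce to $A$ in CFC without $J_1(0)$ blocks via Lemma \ref{basic_laws_lemma}(v),(vi); (2) record $\rank H_2(-1)^{\oplus m}=2m$ and the coarse bound $m\le\frac12\rank A$, disposing of Type-0 even blocks, generic Type-II blocks, and $H_{4k}(1)$ blocks, for which $4\rho(A_i)=\rank A_i$; (3) for the remaining block types ($J_{2k-1}(0)$, $k\ge2$; $\Gamma_k$; $H_{4k-2}(-1)$), establish the sharp per-block bound $m_i\le\rho(A_i)$ by exhibiting an explicit congruence-invariant obstruction — e.g. examining the form $A_i-A_i^\top$ (the alternating part), or restricting to a canonically defined subspace (the image of a power of the nilpotent part, or an eigenspace), on which the induced form has controlled rank; (4) assemble the global bound by an inductive/filtration argument over the direct-sum decomposition of $A$, using that a symplectic (non-degenerate skew-symmetric) subspace cannot "borrow" more alternating rank from a block than that block's $\rho$ allows, together with the additivity \eqref{additive}. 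The main obstacle is step (4): making rigorous that the contributions of distinct CFC blocks to the rank of $X^\top AX$ cannot combine to exceed $\sum_i 2\rho(A_i)$, since the interaction (cross) terms $X_i^\top A_{ij}X_j$ for a non-block-diagonal arrangement could in principle inflate the rank; I would control this by choosing the CFC arrangement so that $A$ is block upper triangular in a suitable basis and then arguing that the skew-symmetric non-degenerate part of $X^\top AX$ only sees the block-diagonal, "congruence-type" content, for which the per-block estimates of step (3) apply.
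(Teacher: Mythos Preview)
Your proposal has a genuine gap: you have not found the key lemma, and the block-by-block strategy you outline cannot be made to work as stated. The paper's argument is \emph{global} and avoids per-block assembly entirely. The crucial observation is that if $X_0^\top A X_0 = H_2(-1)^{\oplus m}$, then adding this identity to its transpose gives $X_0^\top(A+A^\top)X_0 = 0$. Since $X_0$ has full column rank $2m$, two applications of Sylvester's inequality $\rank(PQ)\ge\rank P+\rank Q-k$ yield
\[
0\;\ge\;4m - 2n + \rank(A+A^\top),\qquad\text{i.e.}\qquad 4m\le n+\bigl(n-\rank(A+A^\top)\bigr).
\]
The corank $n-\rank(A+A^\top)$ is additive over the CFC direct sum, and a short computation on each canonical block type shows it equals $j_\odd+\gamma_\varepsilon+2h_{2\odd}^-$ (after removing $J_1(0)$ blocks, so $j_1=0$). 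This is exactly $4\rho(A)-n$, and the bound $m\le\rho(A)$ follows in one stroke.

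Your plan, by contrast, tries to assemble per-block estimates, and you correctly flag step~(4) as the obstacle: the solution $X$ does not split over the blocks of $A$, and there is no ``block upper triangular'' arrangement to exploit, since the CFC is already block diagonal. More concretely, your cheap bound $m\le\frac12\rank A$ is too weak for essentially \emph{every} block, not just the special ones: for $A=H_{2k}(\mu)$ with $\mu\ne0,\pm1$ it gives $m\le k$ whereas $\rho(A)=k/2$; for $A=J_{2k}(0)$ it gives $m\le k-\frac12$ whereas $\rho(A)=k/2$. The missing factor of~$2$ is exactly what the $A+A^\top$ trick supplies: it upgrades ``$X_0$ has $2m$ independent columns'' to ``those columns span a totally isotropic subspace for the symmetric form $A+A^\top$'', halving the available dimension. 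Your mention of $A_i-A_i^\top$ in step~(3) is pointing in the right direction but with the wrong sign --- it is the \emph{symmetric} part $A+A^\top$ that must vanish under the sandwich, and once you see this the argument becomes global and the assembly problem in step~(4) disappears.
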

\begin{proof} We may assume that $A$ is an  $n\times n$ complex matrix  given in CFC and, by the $J_1(0)$-law of consistency of Lemma \ref{basic_laws_lemma}, also that $A$ has no  blocks of type $J_1(0)$ (see the last paragraph of Section~\ref{thelaws}).

For $A$ let $j_1, j_\odd, \gamma_\varepsilon, h_{2 \odd}^{-}$ be as in Definition \ref{components_CFC_def}, and let $j_{\varepsilon}$ be the number of all Type-0 blocks with even size, let $\gamma_{\odd}$ be the  number of Type-I blocks with odd size, $h_2$ be the number of Type-II blocks of the form $H_{2k}(\mu_k)$ with $\mu_k\neq(-1)^{k}$, and $h^+_{2\varepsilon}$ be the number of Type-II blocks of the form $H_{4k}(1)$, for any $k\geq1$. Then, we can write $A$ as 
\begin{eqnarray*} 
A & =&  \left(\bigoplus_{i=1}^{j_\odd} J_{2m_i-1}(0)\right)\oplus
\left(\bigoplus_{i=j_\odd+1}^{j_\odd+j_\varepsilon} J_{2m_i}(0)\right)\oplus
\left(\bigoplus_{j=1}^{\gamma_\odd} \Gamma_{2 \widehat m_j-1}\right)\oplus
\left(\bigoplus_{j=\gamma_\odd+1}^{\gamma_\odd+\gamma_\varepsilon} \Gamma_{2\widehat m_j}\right)\\
&&\oplus 
\left(\bigoplus_{k=1}^{h_2} H_{2 \widecheck m_k} (\mu_k)\right)\oplus
\left(\bigoplus_{k=h_2+1}^{h_2+h_{2\varepsilon}^+} H_{4 \widecheck m_k} (1)\right)\oplus
\left(\bigoplus_{k=h_2+h_{2\varepsilon}^++1}^{h_2+h_{2\varepsilon}^++h_{2\odd}^-} H_{4 \widecheck m_k-2} (-1)\right),
\end{eqnarray*}
where $m_i>1$, for $i=1,\ldots,j_\odd$; $m_i\geq 1$, for $i=j_\odd+1,\ldots,j_\odd+j_\varepsilon$; $\widehat m_j\geq 1$, for $j=1,\hdots,\gamma_\odd+\gamma_\varepsilon$;  $\widecheck m_k\geq 1$, for $k=1,\hdots,h_2+h_{2\varepsilon}^++h_{2\odd}^-$; and where $\mu_k\neq 0,\pm1$, for $k=1,\hdots,h_2$. 
Let us introduce the notation
$$
\begin{array}{ccl}
M&=&m_1+\cdots+m_{j_\odd+j_\varepsilon},\\
\widehat M&=&\widehat m_1+\cdots+\widehat m_{\gamma_\odd+\gamma_\varepsilon},\\
\widecheck M&=&\widecheck m_1+\cdots+\widecheck m_{h_2}+2\widecheck m_{h_2+1}+\cdots+2\widecheck m_{h_2+h_{2\varepsilon}^+}+2\widecheck m_{h_2+h_{2\varepsilon}^++1}+\cdots+2\widecheck m_{h_2+h_{2\varepsilon}^++h_{2\odd}^-}.
\end{array}
$$
Then, we will show that
\begin{eqnarray}
n&=&2(M+\widehat M+\widecheck M)-j_\odd-\gamma_\odd-2h_{2\odd}^-,\label{n3}\\
\rank (A+A^\top)&=&2(M+\widehat M+\widecheck M)-2j_\odd-\gamma_\odd-\gamma_\varepsilon-4h_{2\odd}^-. \label{rankA+AT3}
\end{eqnarray}

Equation \eqref{n3} is immediate. In order to get \eqref{rankA+AT3}, it suffices to check that:
\begin{eqnarray}
\rank( J_{n}(0)+J_n(0)^\top)&=&\left\{\begin{array}{cc}n-1&\mbox{if $n$ is odd}, \\ n&\mbox{if $n$ is even},\end{array}\right.\label{rankJ}\\
\rank (\Gamma_n+\Gamma_n^\top)&=&\left\{\begin{array}{cc}n&\mbox{if $n$ is odd},\\
n-1&\mbox{if $n$ is even},\end{array}\right.\label{rankgamma}\\
\rank (H_{2n}(\mu)+H_{2n}(\mu)^\top)&=&2n \quad \mbox{if $\mu\neq0,1,-1$},\label{rankH} \\
\rank (H_{4n}(1)+H_{4n}(1)^\top)&=&4n,\label{rankH4} \\
\rank (H_{4n-2}(-1)+H_{4n-2}(-1)^\top)&=&4n-4.\label{rankH4-2} 
\end{eqnarray}

To get the second identity in Equation \eqref{rankJ} (i.e. for $n$ even) we can prove that $\det(J_{n}(0)+J_n(0)^\top)=\pm1$ when $n$ is even. This can be done by induction, spanning the determinant across the first row, then across the first column, and then using induction in the remaining minor, namely $\det(J_{n-2}(0)+J_{n-2}(0)^\top)$, together with the identity $\det(J_{2}(0)+J_2(0)^\top)=-1$. As for the first identity in Equation \eqref{rankJ} (i.e. for $n$ odd), we can also use induction in the same way to prove that  $\det(J_{n}(0)+J_n(0)^\top)=0$, since $\det(J_1(0)+J_1(0)^\top)=0$. Finally, note that $\rank( J_{n}(0)+J_n(0)^\top)$ must be at least $n-1$, since
$$
J_n(0)+J_n(0)^\top=\left[\begin{array}{c|c}0&e_1^\top\\\hline e_1&J_{n-1}(0)+J_{n-1}(0)^\top\end{array}\right],
$$
and the $2\times2$ block in the partition above has rank $n-1$, by the previous arguments.

Equation \eqref{rankgamma} is a consequence of the identity
$$
\Gamma_n+\Gamma_n^\top=\left\{\begin{array}{cc} \vspace{2mm}
\left[\begin{smallmatrix}&&&&0\\&&&\iddots&2\\&&0&\iddots&\\&0&-2&&\\0&2&&\end{smallmatrix}\right]&\mbox{if $n$ is even},\\
\left[\begin{smallmatrix}&&&&2\\&&&-2\\&&\iddots&&&\\&-2&\\2&&\end{smallmatrix}\right]&\mbox{if $n$ is odd}.\\
\end{array}\right.
$$
Finally, Equations \eqref{rankH}--\eqref{rankH4-2} follow from the identity:
$$
H_{2n}(\mu)+H_{2n}(\mu)^\top=\footnotesize{\left[\begin{array}{cccc|cccc}
&&&&1+\mu&\\
&&&&1&\ddots\\
&&&&&\ddots&1+\mu\\
&&&&&&1&1+\mu\\ \hline
\mu+1&1&&&&&&\\
&\ddots&\ddots&&&&&\\
&&\mu+1&1&&&&\\
&&&\mu+1&&&&
\end{array}\right].}
$$
From \eqref{n3}--\eqref{rankA+AT3} we conclude
\begin{equation}\label{n-rank(A+AT)}
    n-\rank(A+A^\top)=j_\odd+\gamma_\varepsilon+2h_{2\odd}^-.
\end{equation}

By hypothesis   $A\rightsquigarrow H_2(-1)^{\oplus m}$. This implies that there exist some $X_0 \in\CC^{n\times 2m}$ such that 
\begin{equation} \label{B=skew}
    X_0^\top A X_0=H_2(-1)^{\oplus m}.
\end{equation}
Now, adding \eqref{B=skew} with its transpose, we get
\begin{equation}\label{transpose}
X_0^\top(A+A^\top)X_0=0.
\end{equation}
From \eqref{B=skew} it follows that  $\rank X_0=\rank X_0^\top=2m$. Using this fact, together with the well-know inequality (see \cite[page 13]{horn-johnson}) 
\begin{equation} \label{RankAB2}
\rank(PQ)\geq \rank P + \rank Q -k, \qquad \text{for } P\in \mathbb{C}^{p\times k} \text{ and }   Q\in \mathbb{C}^{k\times q},
\end{equation}
we obtain 
$$
\rank(X_0^\top(A+A^\top))\geq 2m+\rank(A+A^\top)-n,
$$
and, then, using again \eqref{RankAB2} with $P=X_0^\top (A+A^\top)$ and $Q=X_0$ we obtain
$$
0=\rank(X_0^\top(A+A^\top)X_0)\geq \big(2m+\rank(A+A^\top)-n\big)+\rank X_0-n=4m-j_\odd-\gamma_\varepsilon-2h_{2\odd}^{-}-n,
$$
so
\begin{equation*}
m\leq \frac{n+j_\odd+\gamma_\varepsilon+2h_{2\odd}^{-}}{4}=\rho(A).
\end{equation*}

\end{proof}

In Lemma \ref{necessary_th}, the right-hand side matrix $H_2(-1)^{\oplus m}$ is invertible. We can include also singular matrices using the $J_1(0)$-law (Lemma \ref{basic_laws_lemma} \eqref{zeroes_law}). As a consequence, we obtain the following result.

\begin{theorem}\label{necessary_coro}
Let  $A$ and $B$ be complex square matrices with $B$  skew-symmetric. If  $X^\top AX= B$, for some $X$, then $\rank(B)\leq  2 \rho(A)$.
\end{theorem}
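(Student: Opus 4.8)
The plan is to reduce the general skew-symmetric $B$ to the invertible case already covered by Lemma~\ref{necessary_th}, using the Canonical reduction law and the $J_1(0)$-law. First I would invoke Lemma~\ref{CFC-Skew_lemma}(ii): since $B$ is skew-symmetric, its CFC is $H_2(-1)^{\oplus m}\oplus J_1(0)^{\oplus\ell}$ for some $m,\ell\geq0$, and clearly $\rank(B)=2m$ (each $H_2(-1)$ contributes rank $2$, each $J_1(0)$ contributes rank $0$). Thus the claim $\rank(B)\leq2\rho(A)$ is equivalent to $m\leq\rho(A)$.

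Next, by hypothesis $X^\top AX=B$ is consistent, i.e. $A\rightsquigarrow B$. Since $B$ is congruent to $H_2(-1)^{\oplus m}\oplus J_1(0)^{\oplus\ell}$, the Canonical reduction law (Lemma~\ref{basic_laws_lemma}\eqref{reduction_law}) gives $A\rightsquigarrow H_2(-1)^{\oplus m}\oplus J_1(0)^{\oplus\ell}$, and then the $J_1(0)$-law (Lemma~\ref{basic_laws_lemma}\eqref{zeroes_law}) strips off the null blocks on the right-hand side, yielding $A\rightsquigarrow H_2(-1)^{\oplus m}$. (One can apply the $J_1(0)$-law with the left-hand side unchanged, i.e. $k=0$ and $\ell$ as above.)

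Now Lemma~\ref{necessary_th} applies directly to $A\rightsquigarrow H_2(-1)^{\oplus m}$ and yields $m\leq\rho(A)$. Combining with $\rank(B)=2m$ gives $\rank(B)=2m\leq2\rho(A)$, which is the assertion.

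I do not expect any real obstacle here: the theorem is essentially a packaging of Lemma~\ref{necessary_th} together with two of the basic laws of consistency, and the only thing to be slightly careful about is matching the CFC of a skew-symmetric matrix to the form in Lemma~\ref{CFC-Skew_lemma}(ii) and tracking that its rank is exactly $2m$ (which also recovers Remark~\ref{evenrank_rem}). Everything else is a one-line citation of earlier results, so the proof should be short.
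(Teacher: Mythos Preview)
Your proposal is correct and follows exactly the approach the paper indicates: it derives the theorem from Lemma~\ref{necessary_th} via Lemma~\ref{CFC-Skew_lemma}(ii), the Canonical reduction law, and the $J_1(0)$-law, which is precisely what the paper sketches in the sentence preceding the statement. There is nothing to add.
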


We want to emphasize that Theorem~\ref{necessary_coro} provides a necessary condition for Eq.~\eqref{maineq} to be consistent, when $B$ is skew-symmetric, that is valid for any square matrix $A$. To see this, consider Lemma~\ref{CFC-Skew_lemma} \eqref{CFC-Skew_2} and  note that conditions (i)--(vi) in Definition~\ref{components_CFC_def} are not restrictive at all, but just a particular description of the CFC of an arbitrary matrix $A$. The rest of the paper is aimed to prove that if CFC($A$) does not have  either blocks $\Gamma_1$ or  $\Gamma_2$ then $m\leq\rho(A)$ is also a sufficient condition for Eq.~\eqref{maineq} to be consistent. The reason for excluding these blocks is provided in Section \ref{type2blocksbad.sec}.

The main strategy to prove that $m\leq \rho(A)$ is also sufficient is the use of ``$\rho$-invariant" relations, namely relations of the form $A\rightsquigarrow B$ such that $\rho(A)=\rho(B)$. We want to emphasize that all relations $\rightsquigarrow$ in this work, except those corresponding to the Elimination law, are $\rho$-invariant.

The Canonical reduction law (Lemma \ref{basic_laws_lemma} \eqref{reduction_law}) allows us to start from CFC($A$), which can be written as a direct sum  $A_0\oplus A_1\oplus A_2$ where, $A_0, A_1$, and $A_2$ are, respectively, the direct sum of all Type-0 blocks, all Type-I blocks, and all Type-II blocks. In the first place we prove, in Sections \ref{JntoB_sec}, \ref{typeII_sec}, and \ref{typeI_sec}, that, for $i=0,1,2$, the condition $m_i\leq\rho(A_i)$ is sufficient. In order to do this, we concatenate an appropriate series of $\rho$-invariant relations to get $H_2(-1)^{\oplus\lfloor\rho(A_i)\rfloor}\,\oplus C_i$ (where the leftover $C_i$ could be the empty matrix) and such that $\rho(C_i)=\rho(A_i)-\lfloor\rho(A_i)\rfloor< 1$. Using the Addition and Transitivity laws (Lemma \ref{basic_laws_lemma} \eqref{additive_law}--\eqref{transitive_law}) in the previous relations we conclude that 
\begin{equation}\label{first_step}
A\rightsquigarrow H_2(-1)^{\oplus\lfloor\rho(A_0)\rfloor+\lfloor\rho(A_1)\rfloor+\lfloor\rho(A_2)\rfloor}\oplus C_0 \oplus C_1\oplus C_2.
\end{equation} 
Then, in Section \ref{sufficiency_sec}, we will show, for the leftovers, that
\begin{equation}\label{second_step}
C_0 \oplus C_1\oplus C_2 \rightsquigarrow H_2(-1)^{\oplus \lfloor\rho(C_0)+\rho(C_1)+\rho(C_2)\rfloor}.
\end{equation} 
As a conclusion of Equations (\ref{first_step}) and (\ref{second_step}), $m\leq\rho(A)$ is also sufficient when CFC($A$) has blocks of any type, except $\Gamma_1$ and $\Gamma_2$.

\section{The case where CFC($A$) is a direct sum of Type-0 blocks}\label{JntoB_sec}

In Lemmas \ref{Js_lem} and \ref{type0_lem} we provide a collection of  auxiliary statements about the consistency of Equation \eqref{maineq}. The proof of Lemma \ref{Js_lem} is straightforward, since it reduces to check that the provided matrix $X_0$ is indeed a solution of the corresponding equation.

\begin{lemma}\label{Js_lem} 
The following consistency statements hold:
\begin{enumerate}[{\rm(i)}]
\item \label{Jk+tToJk} 
$J_{2k}(0) \overset{X_0}{\rightsquigarrow} J_{2k-1}(0)$, for  $k \geq 1$  and   $X_0=\begin{smat} 0_{1\times 2k-1} \\ I_{2k-1}  \end{smat}$. 
\item \label{Jk+4ToJk}
$ J_{k+4}(0) \overset{X_0}{\rightsquigarrow} J_k(0) \oplus J_{3}(0)$, for  $k \geq  1$  and   $X_0=I_k\oplus \begin{smat}
 0_{1\times 3} \\ I_3
\end{smat}$. 
\item  \label{J2^2toH2(-1)} 
$ J_2(0)^{\oplus 2} \overset{X_0}{\rightsquigarrow} H_2(-1)$, for 
$X_0=\begin{smat} 1 & 0 \\ 0 & 1 \\ 0 & 1 \\ -1 & 0\end{smat}$. 
\item\label{J3toH2(-1)} $ J_3(0) \overset{X_0}{\rightsquigarrow} H_2(-1)$, for 
$X_0= \begin{smat} 1 & 0 \\  0 & 1 \\  -1 & 0 \end{smat}$.
\item\label{J5toH2(-1)} $ J_5(0) \overset{X_0}{\rightsquigarrow} H_2(-1)\oplus J_2(0)$, for 
$X_0=\begin{smat} 0 & 1 & 1 & 0 \\ 
0 & 0 & 0 & 1 \\
0 & -1 & 0 & 0 \\
1 & 0 & 0 & 1 \\
0 & 1 & 0 & 0 \\ \end{smat}$. \label{J5toH2+J2}
\end{enumerate}
\end{lemma}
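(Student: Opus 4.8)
The plan is to verify each of the five items directly: for the relation $A\overset{X_0}{\rightsquigarrow}B$ in question, one simply substitutes the displayed $X_0$ and checks that $X_0^\top A X_0=B$. All the matrices $X_0$ have explicit $0/\pm1$ entries, so in every case this is a finite computation; the only care needed is bookkeeping with block sizes (and the empty-matrix conventions when the parameter $k$ is taken as small as allowed, e.g. $k=1$ in (i) and (ii)).

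For items (i) and (ii) I would exploit the fact that $X_0$ is assembled from identity blocks and rows of zeros, so that right-multiplication by $X_0$ and left-multiplication by $X_0^\top$ act by selecting and reindexing columns and rows. Concretely, in (i) the matrix $X_0=\left[\begin{smallmatrix}0_{1\times 2k-1}\\ I_{2k-1}\end{smallmatrix}\right]$ deletes the first column when applied on the right and the first row when applied on the left, so $X_0^\top J_{2k}(0)X_0$ is exactly the trailing $(2k-1)\times(2k-1)$ principal submatrix of $J_{2k}(0)$, which equals $J_{2k-1}(0)$. In (ii) I would partition $J_{k+4}(0)=\left[\begin{smallmatrix}J_k(0)&E\\ 0&J_4(0)\end{smallmatrix}\right]$, where $E$ is $k\times 4$ with its only nonzero entry (equal to $1$) in position $(k,1)$; since the factor $\left[\begin{smallmatrix}0_{1\times 3}\\ I_3\end{smallmatrix}\right]$ discards the first column of $E$, the off-diagonal coupling disappears after the congruence, the computation splits along the two diagonal blocks, and one gets $J_k(0)\oplus J_3(0)$ (the bottom block reducing, as in (i), to the trailing $3\times 3$ submatrix of $J_4(0)$).

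For items (iii)--(v) the left-hand sides are small ($4\times4$ in (iii) and (v), $3\times3$ in (iv)), so I would compute $AX_0$ column by column -- each column of $AX_0$ is a short combination of columns of a nilpotent Jordan block, hence immediate -- and then left-multiply by $X_0^\top$, checking entrywise that the result equals $H_2(-1)$ in (iii) and (iv) and $H_2(-1)\oplus J_2(0)$ in (v). Item (v) involves the most arithmetic but remains a direct verification.

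There is no genuine obstacle: the content of the lemma is precisely this list of solutions, and the only mild nuisance is keeping the indexing and block sizes straight in the parameter-dependent cases (i) and (ii). These relations -- all of them $\rho$-invariant -- will be used later, together with the Addition and Transitivity laws of Lemma \ref{basic_laws_lemma}, as building blocks of the chains described after Theorem \ref{necessary_coro}.
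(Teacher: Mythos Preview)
Your proposal is correct and matches the paper's own approach exactly: the paper merely states that the proof ``is straightforward, since it reduces to check that the provided matrix $X_0$ is indeed a solution of the corresponding equation,'' which is precisely what you do (with more detail). One tiny slip: in item~(v) the matrix $J_5(0)$ is $5\times 5$, not $4\times 4$, but this does not affect your argument.
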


\begin{lemma}\label{type0_lem} 
For $m\geq1$, the following consistency statements hold:
\begin{enumerate}[{\rm(a)}]
   \item  $J_{4m-1}(0) \rightsquigarrow H_2(-1)^{\oplus m}\label{type0_4m-1}$ and $\rho(J_{4m-1}(0))=m$.
    \item $J_{4m}(0) \rightsquigarrow H_2(-1)^{\oplus m}\label{eq_type0_4m}$ and $\rho(J_{4m}(0))=m$.
    \item $J_{4m+1}(0)\rightsquigarrow H_2(-1)^{\oplus m}\oplus J_2(0)\label{type0_4m+1}$ and $\rho(J_{4m+1}(0))=m+\frac12$.
    \item $J_{4m+2}(0)\rightsquigarrow H_2(-1)^{\oplus m} \oplus J_2(0)$ and $\rho(J_{4m+2}(0))=m+\frac12$.
\end{enumerate}
\end{lemma}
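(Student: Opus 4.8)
The plan is to derive all four statements of Lemma~\ref{type0_lem} from the elementary consistency relations in Lemma~\ref{Js_lem}, combined with the Addition and Transitivity laws (Lemma~\ref{basic_laws_lemma}~\eqref{additive_law}--\eqref{transitive_law}) and the additivity of $\rho$ \eqref{additive}. The $\rho$-values are immediate from Remark~\ref{rho_values}, so the real content is the chain of $\rightsquigarrow$ relations; I will check along the way that each step is $\rho$-invariant, which is automatic once the final $\rho$-values match. The strategy is to repeatedly peel off a $J_3(0)$ from a large Jordan block using Lemma~\ref{Js_lem}\eqref{Jk+4ToJk}, convert each $J_3(0)$ into an $H_2(-1)$ via Lemma~\ref{Js_lem}\eqref{J3toH2(-1)}, and handle the small leftover block ($J_1(0)$, $J_2(0)$, $J_3(0)$, or $J_4(0)$) separately.

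Concretely, for part (a) I would apply Lemma~\ref{Js_lem}\eqref{Jk+4ToJk} repeatedly to get $J_{4m-1}(0)\rightsquigarrow J_3(0)^{\oplus m}$ (peeling off $m-1$ copies of $J_3(0)$ and leaving a final $J_3(0)$), then apply Lemma~\ref{Js_lem}\eqref{J3toH2(-1)} to each summand via the Addition law to reach $H_2(-1)^{\oplus m}$; the Transitivity law stitches the two relations together. For part (b), first use Lemma~\ref{Js_lem}\eqref{Jk+tToJk} to pass $J_{4m}(0)\rightsquigarrow J_{4m-1}(0)$ and then invoke part (a) --- or, alternatively, peel off $m-1$ copies of $J_3(0)$ to land on $J_4(0)\oplus J_3(0)^{\oplus(m-1)}$, note $J_4(0)\rightsquigarrow J_3(0)$ by \eqref{Jk+tToJk}, and convert all $m$ copies of $J_3(0)$. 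For part (c), $J_{4m+1}(0)$: peeling off $m$ copies of $J_3(0)$ leaves $J_1(0)$, but $J_1(0)$ contributes nothing and gets absorbed by the $J_1(0)$-law, so instead I would peel off $m-1$ copies of $J_3(0)$ to reach $J_5(0)\oplus J_3(0)^{\oplus(m-1)}$, apply Lemma~\ref{Js_lem}\eqref{J5toH2+J2} to the $J_5(0)$ summand to produce an extra $H_2(-1)\oplus J_2(0)$, and convert the remaining $m-1$ copies of $J_3(0)$, yielding $H_2(-1)^{\oplus m}\oplus J_2(0)$. For part (d), use \eqref{Jk+tToJk} to get $J_{4m+2}(0)\rightsquigarrow J_{4m+1}(0)$ and apply part (c).

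The only genuine subtlety I anticipate is bookkeeping in the iterated application of \eqref{Jk+4ToJk}: one must be careful that after peeling, the index of the remaining large block stays in the range where the relevant small-case lemma applies (so, e.g., one should stop the peeling at $J_5(0)$ rather than $J_1(0)$ in the $4m+1$ case, and make sure the $m=1$ base cases --- $J_3(0)$, $J_4(0)$, $J_5(0)$, $J_6(0)$ --- are directly covered by Lemma~\ref{Js_lem}\eqref{J2^2toH2(-1)}--\eqref{J5toH2+J2} and \eqref{Jk+tToJk}). There is no analytic or combinatorial obstacle here; the proof is a short induction (or explicit telescoping product of the solution matrices $X_0$) together with a table lookup in Remark~\ref{rho_values}, and the main ``work'' is simply writing the chains cleanly.

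\begin{proof}
The values of $\rho$ are read directly from Remark~\ref{rho_values}, so it remains to establish the consistency relations; by the Transitivity and Addition laws (Lemma~\ref{basic_laws_lemma}~\eqref{additive_law}--\eqref{transitive_law}) it suffices to exhibit suitable chains.

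(a) Applying Lemma~\ref{Js_lem}~\eqref{Jk+4ToJk} with $k=4(m-1)-1,4(m-2)-1,\ldots,3$ successively (or, formally, by induction on $m$, the base case $m=1$ being trivial) we obtain
$$
J_{4m-1}(0)\rightsquigarrow J_3(0)^{\oplus m}.
$$
Now Lemma~\ref{Js_lem}~\eqref{J3toH2(-1)} gives $J_3(0)\rightsquigarrow H_2(-1)$, and the Addition law yields $J_3(0)^{\oplus m}\rightsquigarrow H_2(-1)^{\oplus m}$. By transitivity, $J_{4m-1}(0)\rightsquigarrow H_2(-1)^{\oplus m}$.

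(b) By Lemma~\ref{Js_lem}~\eqref{Jk+tToJk}, $J_{4m}(0)\rightsquigarrow J_{4m-1}(0)$, and the claim follows from part (a) and transitivity.

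(c) We argue by induction on $m$. For $m=1$, Lemma~\ref{Js_lem}~\eqref{J5toH2+J2} gives $J_5(0)\rightsquigarrow H_2(-1)\oplus J_2(0)$. For $m\geq 2$, Lemma~\ref{Js_lem}~\eqref{Jk+4ToJk} gives $J_{4m+1}(0)\rightsquigarrow J_{4(m-1)+1}(0)\oplus J_3(0)$; by the inductive hypothesis and Lemma~\ref{Js_lem}~\eqref{J3toH2(-1)}, together with the Addition law, the right-hand side satisfies $J_{4(m-1)+1}(0)\oplus J_3(0)\rightsquigarrow \big(H_2(-1)^{\oplus(m-1)}\oplus J_2(0)\big)\oplus H_2(-1)$, which, after permutation, equals $H_2(-1)^{\oplus m}\oplus J_2(0)$. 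Transitivity completes the step.

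(d) By Lemma~\ref{Js_lem}~\eqref{Jk+tToJk}, $J_{4m+2}(0)\rightsquigarrow J_{4m+1}(0)$, so the claim follows from part (c) and transitivity.
\end{proof}
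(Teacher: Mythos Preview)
Your proof is correct and follows essentially the same approach as the paper: in each part you repeatedly peel off a $J_3(0)$ via Lemma~\ref{Js_lem}\eqref{Jk+4ToJk}, convert the $J_3(0)$'s to $H_2(-1)$'s via \eqref{J3toH2(-1)}, and handle the residual small block with \eqref{Jk+tToJk} or \eqref{J5toH2+J2}. The only cosmetic difference is that in part (c) you phrase the argument as an induction on $m$ whereas the paper writes out the full chain $J_{4m+1}(0)\rightsquigarrow\cdots\rightsquigarrow J_5(0)\oplus J_3(0)^{\oplus m-1}$ explicitly; these are the same argument.
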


\begin{proof} The second claim in each item is given in Remark~\ref{rho_values}. Let us prove the first claims.
\begin{enumerate}[(a)]
\item For $m=1$, the statement is Lemma~\ref{Js_lem}~\eqref{J3toH2(-1)}. For $m>1$ we have 
$$
J_{4m-1}(0) \rightsquigarrow J_{4m-5}\oplus J_3(0) \rightsquigarrow \cdots \rightsquigarrow J_3(0)^{\oplus m} \rightsquigarrow H_2(-1)^{\oplus m},
$$
where all except the last step are due to Lemma~\ref{Js_lem}~\eqref{Jk+4ToJk}, and the last step is due to Lemma~\ref{Js_lem}~\eqref{J3toH2(-1)}. Note that, at each step (except the first one), we also use the Addition law (Lemma \ref{basic_laws_lemma} \eqref{additive_law}).

 \item Using Lemma~\ref{Js_lem}~\eqref{Jk+tToJk} and part \eqref{type0_4m-1} of the present lemma, we have:  
$J_{4m}(0) \rightsquigarrow  J_{4m-1}(0) \rightsquigarrow  H_2(-1)^{\oplus m}.$ 

\item  For $m=1$, the claim is Lemma~\ref{Js_lem}~\eqref{J5toH2(-1)}.  For $m>1$ we have 
$$
J_{4m+1}(0) \rightsquigarrow J_{4m-3}\oplus J_3(0) \rightsquigarrow \cdots \rightsquigarrow J_5(0)\oplus J_3(0)^{\oplus m-1} \rightsquigarrow H_2(-1)^{\oplus m} \oplus J_2(0),
$$
where all except the last step are due to Lemma~\ref{Js_lem}~(\ref{Jk+4ToJk}) and the last step is due to Lemma~\ref{Js_lem}~(\ref{J3toH2(-1)})--(\ref{J5toH2+J2}). Note that, again, we use the Addition law.

\item Using Lemma~\ref{Js_lem}~(\ref{Jk+tToJk}) and part \eqref{type0_4m+1} in the present lemma, we have:    
$J_{4m+2}(0)\rightsquigarrow J_{4m+1}(0)\rightsquigarrow H_2(-1)^{\oplus m}\oplus J_2(0).$
\end{enumerate}
\end{proof}

Now we are ready to prove the converse of Lemma~\ref{necessary_th} when only Type-0 blocks are involved in the CFC of $A$. In other words, if $A$ is a complex square matrix  whose {\rm CFC} has only Type-0 blocks and  $m \leq  \rho(A)$  then  $A \rightsquigarrow H_2(-1)^{\oplus m}$. This is a direct consequence of Lemma~\ref{directsum.lemma} and the following result.

\begin{theorem}\label{Js_th}
For a complex square matrix $A$   whose {\rm CFC} has only Type-0 blocks, one of the following is satisfied: 
\begin{enumerate}[{\rm1.}]
\item\label{rhointeger} $\rho(A)$ is an integer and  $ A\rightsquigarrow H_2(-1)^{\oplus \rho(A)}$.
\item\label{rho-1/2integer}  $\rho(A)-\frac{1}{2}$ is an integer and   $A \rightsquigarrow H_2(-1)^{\oplus \rho(A)-\frac{1}{2}}\oplus J_2(0)$. 
\end{enumerate}
\end{theorem}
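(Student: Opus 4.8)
The plan is to reduce an arbitrary Type-0 direct sum to a direct sum of the elementary Type-0 blocks $J_k(0)$ and then apply Lemma~\ref{type0_lem} blockwise, keeping careful track of parity via the additive property \eqref{additive} of $\rho$. First I would write CFC($A$) as $\bigoplus_i J_{k_i}(0)$ and, by the $J_1(0)$-law (Lemma~\ref{basic_laws_lemma}\eqref{zeroes_law}), discard all blocks $J_1(0)$, since they contribute $0$ to $\rho$ and do not affect consistency; so I may assume every $k_i\geq 2$. By \eqref{additive}, $\rho(A)=\sum_i\rho(J_{k_i}(0))$, and by Remark~\ref{rho_values}, each summand $\rho(J_{k_i}(0))$ equals $\lceil k_i/2\rceil/2$ — i.e.\ it is a positive integer multiple of $\tfrac12$, and it is a half-integer (not an integer) precisely when $k_i$ is odd (Lemma~\ref{type0_lem}(c)--(d) give the ``$+\tfrac12$'' cases, namely $J_{4m+1}(0)$ and $J_{4m+2}(0)$).

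Next I would apply Lemma~\ref{type0_lem} to each $J_{k_i}(0)$ individually: each $J_{k_i}(0)$ satisfies either $J_{k_i}(0)\rightsquigarrow H_2(-1)^{\oplus\lfloor\rho(J_{k_i}(0))\rfloor}$ (when $\rho(J_{k_i}(0))$ is an integer, cases (a),(b)) or $J_{k_i}(0)\rightsquigarrow H_2(-1)^{\oplus\lfloor\rho(J_{k_i}(0))\rfloor}\oplus J_2(0)$ (when it is a half-integer, cases (c),(d)). Using the Addition law (Lemma~\ref{basic_laws_lemma}\eqref{additive_law}) over all $i$, I obtain
\begin{equation*}
A\rightsquigarrow H_2(-1)^{\oplus s}\oplus J_2(0)^{\oplus t},
\end{equation*}
where $t$ is the number of blocks $J_{k_i}(0)$ with $k_i$ odd and $s=\sum_i\lfloor\rho(J_{k_i}(0))\rfloor$, so that $\sum_i\rho(J_{k_i}(0))=s+t/2$, i.e.\ $\rho(A)=s+\tfrac t2$. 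Now I pair up the leftover $J_2(0)$ blocks: by Lemma~\ref{Js_lem}\eqref{J2^2toH2(-1)}, $J_2(0)^{\oplus 2}\rightsquigarrow H_2(-1)$, and applying this $\lfloor t/2\rfloor$ times (with the Addition and Transitivity laws) converts $J_2(0)^{\oplus t}$ into $H_2(-1)^{\oplus\lfloor t/2\rfloor}$ if $t$ is even, or into $H_2(-1)^{\oplus(t-1)/2}\oplus J_2(0)$ if $t$ is odd.

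Finally I would split into the two cases of the statement. If $t$ is even, then $\rho(A)=s+t/2$ is an integer and the concatenation above (via Transitivity, Lemma~\ref{basic_laws_lemma}\eqref{transitive_law}) yields $A\rightsquigarrow H_2(-1)^{\oplus s+t/2}=H_2(-1)^{\oplus\rho(A)}$, which is Case~\ref{rhointeger}. If $t$ is odd, then $\rho(A)=s+t/2$ has $\rho(A)-\tfrac12=s+(t-1)/2\in\mathbb Z$, and we get $A\rightsquigarrow H_2(-1)^{\oplus s+(t-1)/2}\oplus J_2(0)=H_2(-1)^{\oplus\rho(A)-\frac12}\oplus J_2(0)$, which is Case~\ref{rho-1/2integer}. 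This exhausts both possibilities, since $t$ is either even or odd. The main thing to be careful about is not a deep obstacle but rather the bookkeeping: one must verify that the parity of $\rho(A)$ (integer versus half-integer) is governed exactly by the parity of the count $t$ of odd-sized Type-0 blocks, which follows cleanly from Remark~\ref{rho_values}(ii)--(iii) — odd-sized blocks $J_{2k-1}(0)$ have $\rho=k/2$ and even-sized blocks $J_{2k}(0)$ have $\rho=k/2$, and in the reduction it is precisely the odd-sized blocks that leave a residual $J_2(0)$. I expect the only genuine care to be in checking that Lemma~\ref{type0_lem}(c)--(d) indeed account for all odd-sized blocks ($J_{4m+1}(0)$ covers sizes $\equiv1\bmod 4$ and $J_{4m+2}(0)$, though even-sized, also leaves a $J_2(0)$) and reconciling this with the parity claim — but since $\rho(J_{4m+2}(0))=m+\tfrac12$ is itself a half-integer, the accounting is consistent: a block leaves a residual $J_2(0)$ exactly when its own $\rho$ is a half-integer, and $\rho(A)$ modulo $1$ is the sum of these residuals.
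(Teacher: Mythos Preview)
Your approach is essentially identical to the paper's: discard $J_1(0)$ blocks, apply Lemma~\ref{type0_lem} to each remaining block to obtain $A\rightsquigarrow H_2(-1)^{\oplus s}\oplus J_2(0)^{\oplus t}$, pair up the leftover $J_2(0)$'s via Lemma~\ref{Js_lem}\eqref{J2^2toH2(-1)}, and split on the parity of $t$. The paper writes this out by grouping the blocks according to their size modulo~$4$ (counts $\alpha,\beta,\gamma,\delta$), with $t=\gamma+\delta$.

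There is one bookkeeping slip you should clean up. You assert twice that $\rho(J_{k_i}(0))=\lceil k_i/2\rceil/2$ is a genuine half-integer ``precisely when $k_i$ is odd'', and you then define $t$ as the number of odd-sized blocks. That is not correct: $\lceil k_i/2\rceil/2$ is a non-integer half-integer iff $\lceil k_i/2\rceil$ is odd, i.e.\ iff $k_i\equiv 1$ or $2\pmod 4$ --- these are exactly the cases (c)--(d) of Lemma~\ref{type0_lem}, and in particular $J_{4m+2}(0)$ is even-sized yet contributes a residual $J_2(0)$. You essentially notice and repair this in your final paragraph (``a block leaves a residual $J_2(0)$ exactly when its own $\rho$ is a half-integer''), so the fix is just to define $t$ from the outset as the number of blocks with $k_i\equiv 1,2\pmod 4$ (equivalently, with half-integer $\rho$), which is precisely the paper's $\gamma+\delta$. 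With that correction the argument is complete.
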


\begin{proof} We may assume  that $A$ is in CFC. If $\widetilde A$ is obtained from $A$ by deleting the blocks $J_1(0)$ then $\rho(\widetilde A)=\rho(A)$. This fact, together with Lemma~\ref{basic_laws_lemma}~(\ref{zeroes_law}), permits us to assume that $A$ has no  blocks of type $J_1(0)$.  So we can write $A$ as 
\begin{eqnarray*} 
A= \left(\bigoplus_{h=1}^{\alpha} J_{4 \widetilde   m_h-1} (0)\right)\oplus  
\left(\bigoplus_{i=1}^{\beta} J_{4  \widehat m_i} (0)\right)\oplus 
\left(\bigoplus_{j=1}^{\gamma} J_{4  \widecheck m_j +1}(0) \right)\oplus 
\left(\bigoplus_{k=1}^{\delta} J_{4 m'_k+2}(0) \right),
\end{eqnarray*}
where $\alpha, \beta,\gamma,\delta \geq 0$, and
$$\widetilde m_1, \ldots, \widetilde m_\alpha\geq 1; \
\phantom{m=}\widehat m_1, \ldots, \widehat m_\beta\geq 1; \ 
\phantom{m=}\widecheck m_1,\ldots, \widecheck m_\gamma\geq 1; \ 
\phantom{m=}m'_1,\ldots,  m'_\delta\geq 0.$$

Let us define also
$$\widetilde m= \widetilde m_1+\cdots+ \widetilde m_\alpha; \quad 
\widehat m= \widehat m_1+\cdots+ \widehat m_\beta; \quad 
\widecheck m= \widecheck m_1+\cdots+ \widecheck m_\gamma; \quad 
m'=  m'_1+\cdots+  m'_\delta.$$

From the additive property of $\rho$ (namely, \eqref{additive}) and Lemma~\ref{type0_lem} it follows that 
\begin{eqnarray} \label{rho(A)}
\rho(A)& = & 
\sum_{h=1}^{\alpha}\rho(J_{4 \widetilde m_h-1} (0))+
\sum_{i=1}^{\beta}\rho(J_{4 \widehat m_i} (0))+
\sum_{j=1}^{\gamma}\rho(J_{4 \widecheck m_j+1} (0))+
\sum_{k=1}^{\delta}\rho(J_{4  m'_k+2} (0))\\ \nonumber
&=& 
\sum_{h=1}^{\alpha}\widetilde m_h+
\sum_{i=1}^{\beta}\widehat m_i+
\sum_{j=1}^{\gamma}\left(\widecheck m_j+\frac12\right)+
\sum_{k=1}^{\delta}\left(m'_{k}+\frac12\right)\\ \nonumber
&=&\widetilde m+ \widehat m+\widecheck m+m'+\frac{\gamma+\delta}{2}.
\end{eqnarray}
Depending on the parity of $\gamma+\delta$, either $\rho(A)$ or $\rho(A)-\frac12$ is an integer.

Let us prove the second part of the statement. From Lemmas~\ref{suma_directa_rightsquigarrows_lem} and~\ref{type0_lem}
$$\begin{array}{lll}
\displaystyle\bigoplus_{h=1}^{\alpha} J_{4 \widetilde m_h-1} (0)
 {\rightsquigarrow}  \displaystyle
\bigoplus_{h=1}^{\alpha}H_2(-1)^{\oplus \widetilde m_h}; &&
 \displaystyle \bigoplus_{i=1}^{\beta} J_{4  \widehat m_i} (0)
\rightsquigarrow \bigoplus_{i=1}^{\beta}H_2(-1)^{\oplus\widehat m_i};\\ 
\displaystyle \bigoplus_{j=1}^{\gamma} J_{4  \widecheck m_j +1}(0)
 {\rightsquigarrow}  \displaystyle
\bigoplus_{j=1}^{\gamma} \Big(H_2(-1)^{\oplus \widecheck m_j} \oplus J_2(0)\Big); && 
\displaystyle \bigoplus_{k=1}^{\delta} J_{4 m'_k+2}(0) 
 {\rightsquigarrow}  \displaystyle
\bigoplus_{k=1}^{\delta} \Big(H_2(-1)^{\oplus m'_k} \oplus J_2(0)\Big).
\end{array}$$
Then 
$$
A{\rightsquigarrow}
H_2(-1)^{\oplus \widetilde m}\oplus H_2(-1)^{\oplus  \widehat m} \oplus \bigoplus_{j=1}^{\gamma} \Big(H_2(-1)^{\oplus \widecheck m_j} \oplus J_2(0)\Big)  \oplus \bigoplus_{k=1}^{\delta} \Big(H_2(-1)^{\oplus m'_k} \oplus J_2(0)\Big)=H_2(-1)^{\oplus \widetilde m+  \widehat m+ \widecheck m+ m'} \oplus J_2(0)^{\oplus \gamma+\delta }.
$$
With the help of Lemma~\ref{Js_lem}~(\ref{J2^2toH2(-1)}), together with (\ref{rho(A)}), we analyze the two possible cases that arise:
\begin{enumerate}
    \item If $\gamma+\delta$ is even then $\rho(A)$ is an integer and
    $$
     A \rightsquigarrow H_2(-1)^{\oplus \widetilde m+  \widehat m+ \widecheck m+ m'} \oplus J_2(0)^{\oplus \gamma+\delta } \rightsquigarrow H_2(-1)^{\oplus \widetilde m+  \widehat m+ \widecheck m+ m'+\frac{\gamma+\delta}{2}}=H_2(-1)^{\oplus \rho(A)}.
    $$
\item If $\gamma+\delta$ is odd then $\rho(A)-\frac12$ is an integer and
    $$
     A \rightsquigarrow H_2(-1)^{\oplus \widetilde m+  \widehat m+ \widecheck m+ m'} \oplus J_2(0)^{\oplus \gamma+\delta } \rightsquigarrow H_2(-1)^{\oplus \widetilde m+  \widehat m+ \widecheck m+ m'+\frac{\gamma+\delta-1}{2}}\oplus J_2(0)=H_2(-1)^{\oplus \rho(A)-\frac12}\oplus J_2(0).
    $$
\end{enumerate}
\end{proof}

\begin{theorem}\label{sufficiency-type0_th}
Let $A$ be a complex square matrix whose {\rm CFC} is a direct sum of Type-0 blocks. Then $X^\top A X= H_2(-1)^{\oplus m}$ is consistent if and only if $m\leq \rho(A)$. 
\end{theorem}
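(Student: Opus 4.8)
The plan is to combine the two directions that have essentially already been prepared. For the ``only if'' direction, there is nothing left to do: if $X^\top A X = H_2(-1)^{\oplus m}$ is consistent, then Lemma~\ref{necessary_th} gives $m\leq\rho(A)$ directly (and this holds for an arbitrary square matrix $A$, so in particular when $\mathrm{CFC}(A)$ is a direct sum of Type-0 blocks). So the content is the ``if'' direction, and the task is to derive it from Theorem~\ref{Js_th}.

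For the ``if'' direction, assume $m\leq\rho(A)$. By Theorem~\ref{Js_th}, either $\rho(A)$ is an integer and $A\rightsquigarrow H_2(-1)^{\oplus\rho(A)}$, or $\rho(A)-\tfrac12$ is an integer and $A\rightsquigarrow H_2(-1)^{\oplus\rho(A)-\frac12}\oplus J_2(0)$. In the first case, since $m$ is a nonnegative integer with $m\leq\rho(A)$, the Elimination law (Lemma~\ref{basic_laws_lemma}~\eqref{elim_law}) gives $H_2(-1)^{\oplus\rho(A)}\rightsquigarrow H_2(-1)^{\oplus m}$, and then the Transitivity law (Lemma~\ref{basic_laws_lemma}~\eqref{transitive_law}) yields $A\rightsquigarrow H_2(-1)^{\oplus m}$. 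In the second case, one has to be slightly more careful about whether $m$ can be as large as $\rho(A)-\tfrac12<\rho(A)$; but since $m$ is an integer and $\rho(A)-\tfrac12$ is an integer, $m\leq\rho(A)$ forces $m\leq\rho(A)-\tfrac12$, so again $H_2(-1)^{\oplus\rho(A)-\frac12}\oplus J_2(0)\rightsquigarrow H_2(-1)^{\oplus m}$ by the Elimination law, and Transitivity finishes the argument. (Alternatively, in this subcase one can invoke Lemma~\ref{Js_lem}~\eqref{J2^2toH2(-1)} is not needed; the leftover $J_2(0)$ is simply discarded by Elimination.)

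One subtle point worth addressing explicitly is that Theorem~\ref{Js_th} is stated for a matrix whose CFC has only Type-0 blocks, and $\mathrm{CFC}(A)$ being a direct sum of Type-0 blocks is exactly that hypothesis (possibly including blocks $J_1(0)$, which is allowed by the wording of Theorem~\ref{Js_th} and handled there via the $J_1(0)$-law); so there is no gap in applying it. The Canonical reduction law (Lemma~\ref{basic_laws_lemma}~\eqref{reduction_law}) is not even needed here because $A$ itself is assumed in CFC form, though one could state the result for any $A$ congruent to such a direct sum by invoking it. I do not anticipate any real obstacle: the theorem is essentially a packaging of Theorem~\ref{Js_th} together with the Elimination and Transitivity laws, and the only thing to be careful about is the integrality argument in the half-integer case, ensuring $m\leq\rho(A)$ and $m\in\mathbb Z$ together imply $m\leq\lfloor\rho(A)\rfloor$.
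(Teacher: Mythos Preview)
Your proof is correct and follows essentially the same approach as the paper: necessity from Lemma~\ref{necessary_th}, and sufficiency by splitting into the two cases of Theorem~\ref{Js_th}, using the integrality of $m$ to conclude $m\leq\lfloor\rho(A)\rfloor$, and then applying the Elimination and Transitivity laws to reach $A\rightsquigarrow H_2(-1)^{\oplus m}$. The only (cosmetic) difference is that the paper phrases the reduction via $\lfloor\rho(A)\rfloor$ up front, whereas you handle the integer and half-integer cases separately; also note that the statement assumes only that $\mathrm{CFC}(A)$ is a direct sum of Type-0 blocks, not that $A$ itself is in CFC, so a tacit use of the Canonical reduction law is implicit.
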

\begin{proof} 
The necessity was established  in Lemma \ref{necessary_th}.  Now, we will see that sufficiency is consequence of Theorem \ref{Js_th}. Assume   that $m\leq\rho(A)$. Since  $m\leq\rho(A)$ is equivalent to $m\leq\lfloor\rho(A)\rfloor$ then, according to the Elimination law, it suffices to prove that $A\rightsquigarrow H_2(-1)^{\oplus\lfloor \rho(A)\rfloor}$.  By Theorem \ref{Js_th} two  possibilities arise:
\begin{enumerate}
    \item  $\rho(A)$ is an integer. Then $\lfloor \rho(A)\rfloor=\rho(A)$ and the result is immediate from part \ref{rhointeger} in Theorem \ref{Js_th}.
    \item $\rho(A)-1/2$ is an integer. Then $\lfloor \rho(A)\rfloor=\rho(A)-\frac12$ and the result follows from  
    $$
    A\rightsquigarrow H_2(-1)^{\oplus \rho(A)-\frac{1}{2}}\oplus J_2(0)\rightsquigarrow H_2(-1)^{\oplus \rho(A)-\frac{1}{2}},
    $$
    where the first step is given in part \ref{rho-1/2integer} of Theorem \ref{Js_th}, and the second  is  consequence of the Elimination law.
\end{enumerate}
\end{proof}

\section{The case where CFC($A$) is a direct sum of Type-II blocks}\label{typeII_sec}

As in Section \ref{JntoB_sec}, we first provide, in Lemmas \ref{H2k+4toH2k_lem} and \ref{aux_typeII_lemma}, a collection of  auxiliary statements on the consistency. 

\begin{lemma}\label{H2k+4toH2k_lem}
For any complex numbers $\mu$ and $\nu$, the following statements hold.

\begin{enumerate}[{\rm(i)}] 
\item \label{H2k+4toH2+H2k}
$H_{2k+4}(\mu)\overset{X_0}{\rightsquigarrow} H_2(-1)\oplus H_{2k}(\mu)$, for $k\geq 0$, and $X_0={\scriptsize\left[\begin{array}{c|cc|c}
 1 &&& \\  
 -1-\mu &&&    \\ \hline
  &   & I_k & \\
  & 1  &  & \\ \hline
 &  &  & 0_{1\times k} \\
 & & & I_{k} \\
\end{array}\right]_{(2k+4)\times(2k+2)}}$.

\item \label{H2mu+H2nuToH2(-1)}
$H_2(\mu)\oplus H_2(\nu) \overset{X_0}{\rightsquigarrow} H_2(-1)$,  for $\mu\neq \nu$, with $\mu, \nu\neq -1$, and 
$X_0=
\left[\begin{smallmatrix}
 1 & 0 \\
 0 & \frac{\nu +1}{\nu -\mu } \\
 1 & 0 \\
 0 & \frac{\mu +1}{\mu -\nu }
\end{smallmatrix}\right].$

\item \label{H2mu+H2muToH2(-1)}
$H_2(\mu)^{\oplus 2}  \overset{X_0}{\rightsquigarrow}  H_2(-1)$,  for $\mu \neq \pm1$, and
$X_0=
\left[\begin{smallmatrix}
 0 & 1 \\
 \frac{1}{\mu-1} & 0 \\
 1 & 0 \\
 0 & \frac{1}{1-\mu} \\
\end{smallmatrix}\right]$.

\item \label{H2mu+J2toH2(-1)} 
$H_2(\mu)\oplus J_2(0) \overset{X_0}{\rightsquigarrow} H_2(-1)$, for $\mu\neq -1$, and 
$X_0=
\left[\begin{smallmatrix}
 1 & 0 \\
 0 & -\frac{1}{\mu } \\
 1 & 0 \\
 0 & \frac{\mu +1}{\mu}
\end{smallmatrix}\right].$
\end{enumerate}
\end{lemma}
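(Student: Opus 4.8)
The four assertions all have the form $A\overset{X_0}{\rightsquigarrow}B$ with $A$, $B$ and $X_0$ written out explicitly, so, exactly as for Lemma~\ref{Js_lem}, the plan is simply to verify the matrix identity $X_0^\top AX_0=B$ in each case. I would use throughout the block shape $H_{2k}(\mu)=\left[\begin{smallmatrix}0&I_k\\ J_k(\mu)&0\end{smallmatrix}\right]$; in particular $H_2(-1)=\left[\begin{smallmatrix}0&1\\-1&0\end{smallmatrix}\right]$, $H_2(\mu)=\left[\begin{smallmatrix}0&1\\\mu&0\end{smallmatrix}\right]$, and $H_2(0)=J_2(0)$.

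For parts (ii), (iii) and (iv) the coefficient matrix is $4\times4$ and $X_0$ is $4\times2$, so $X_0^\top AX_0$ is a $2\times2$ matrix and each verification is a single short explicit product. The entries of $X_0$ are rational functions of $\mu$ (and, in (ii), also of $\nu$), and the stated hypotheses ($\mu\neq\nu$ in (ii), $\mu\neq\pm1$ in (iii), $\mu\neq-1$ in (iv)) are exactly what guarantees that the denominators occurring there do not vanish. Two remarks save work here: since $J_2(0)=H_2(0)$ and the matrix $X_0$ of (iv) is literally the $\nu=0$ specialization of the matrix $X_0$ of (ii), statement (iv) (for $\mu\neq0$) is the $\nu=0$ case of (ii); and statement (iii) is precisely the diagonal case $\mu=\nu$ that (ii) has to exclude, handled by the same kind of two-line computation with its own $X_0$. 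So a single genuinely new $4\times4$ check, that of (ii), carries items (ii)--(iv).

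The part that needs care is (i), where the size $2k+4$ is a parameter. I would split $X_0=\left[\begin{smallmatrix}P\\ Q\end{smallmatrix}\right]$ into its top $k+2$ rows $P$ and its bottom $k+2$ rows $Q$, so that, from $H_{2k+4}(\mu)=\left[\begin{smallmatrix}0&I_{k+2}\\ J_{k+2}(\mu)&0\end{smallmatrix}\right]$, one gets $X_0^\top H_{2k+4}(\mu)X_0=P^\top Q+Q^\top J_{k+2}(\mu)P$. Reading off $P$ and $Q$ from the displayed $X_0$ (partition the $2k+2$ columns as $1,1,k,k$): $P$ carries the entries $1$ and $-1-\mu$ in its first column-block and $I_k$ in its third column-block; $Q$ carries a single $1$ in its second column-block, a zero row, and $I_k$ in its fourth column-block. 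Both $P$ and $Q$ are thus sparse, essentially permutation-type, so $P^\top Q$ contributes exactly the ``$I$''-type entries (the $(1,2)$ entry $1$ and the $I_k$ in position $(3,4)$), while $Q^\top J_{k+2}(\mu)P$ contributes the $(2,1)$ entry, which comes out as $\mu\cdot1+1\cdot(-1-\mu)=-1$ via the superdiagonal $1$ of $J_{k+2}(\mu)$, together with the block $J_k(\mu)$ in position $(4,3)$; all remaining blocks vanish, essentially because the zero row of $X_0$ absorbs the stray terms the superdiagonal of $J_{k+2}(\mu)$ would otherwise produce. The sum is then precisely $H_2(-1)\oplus H_{2k}(\mu)$. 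The boundary case $k=0$ (where $H_0(\mu)$ is empty, $X_0=\left[\begin{smallmatrix}1&0\\-1-\mu&0\\0&1\\0&0\end{smallmatrix}\right]$, and the claim is $X_0^\top H_4(\mu)X_0=H_2(-1)$) I would record as a one-line $4\times4$ check.

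The main, and essentially only, obstacle is the index bookkeeping in (i): getting the column-blocks of $P$ and $Q$ in the right positions and confirming that the single displacement caused by the nilpotent part of $J_{k+2}(\mu)$ lands exactly where it must so that the off-diagonal blocks cancel. There is no conceptual difficulty --- after the partition is fixed, everything is a product of sparse blocks --- and the computation is cleanest if one first runs the small cases $k=0$ and $k=1$ to fix the pattern and then writes out the general blockwise product once.
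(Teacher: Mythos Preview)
Your proposal is correct. For parts (ii)--(iv) it coincides with the paper's approach, which simply declares them ``straightforward'' (your observation that (iv) is the $\nu=0$ specialization of (ii), since $J_2(0)=H_2(0)$, is a nice economy the paper does not mention). For part (i), however, you take a genuinely different route. The paper factors the given matrix as $X_0=YZP$, where $Y$ is the elementary matrix adding $-1-\mu$ times row~1 to row~2, $Z$ deletes columns $2$ and $k+4$ of $I_{2k+4}$, and $P$ is a cyclic permutation; it then computes $P^\top Z^\top Y^\top H_{2k+4}(\mu)\,YZP$ in three successive conjugations. Your approach instead exploits the $2\times2$ block structure of $H_{2k+4}(\mu)=\left[\begin{smallmatrix}0&I\\J&0\end{smallmatrix}\right]$ directly: splitting $X_0=\left[\begin{smallmatrix}P\\Q\end{smallmatrix}\right]$ horizontally reduces the verification to the single identity $P^\top Q+Q^\top J_{k+2}(\mu)P=H_2(-1)\oplus H_{2k}(\mu)$, which you read off from the sparsity of $P$ and $Q$. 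The paper's factorization explains \emph{how} one might have discovered $X_0$ (as a composition of elementary moves), whereas your block computation is shorter and makes the role of the zero row of $Q$---killing the stray $e_2$ term produced by the superdiagonal of $J_{k+2}(\mu)$---more transparent.
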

\begin{proof} Claims~(\ref{H2mu+H2nuToH2(-1)}), (\ref{H2mu+H2muToH2(-1)}), and~(\ref{H2mu+J2toH2(-1)}) are straightforward. 
In order to prove claim~(\ref{H2k+4toH2+H2k}), let 
$$Y={\scriptsize\left[\begin{array}{cc|c} 1 & 0 & \\ -1-\mu &1 & \\ \hline & &   I_{2k+2} 
\end{array}\right]}
$$ be the  matrix of size $(2k+4)\times(2k+4)$ that differs from $I_{2k+4}$ only in the $(2,1)$ entry, which is equal to $-1-\mu$. Let also
$$
Z= 
\begin{bmatrix}
e_1 & e_3 & e_4 & \cdots & e_{k+2}& e_{k+3} & e_{k+5} & \cdots & e_{2k+4}\end{bmatrix}
$$ 
be the  matrix  obtained from $I_{2k+4}$ after deleting the second and  $(k+4)$-th columns;
and let 
$${\scriptsize P=\left[\begin{array}{c|cc|c}
1 & & &    \\ \hline
 & & I_k &    \\ 
 &1 & &     \\    \hline
 & &  & I_k
\end{array}\right]}
$$ be the permutation matrix of size $(2k+2)\times(2k+2)$ corresponding to the cyclic permutation $(2\,3\cdots\, k+2)$. Then
$$ H_{2k+4}(\mu) \overset{YZP}{\rightsquigarrow}H_{2}(-1) \oplus H_{2k}(\mu) $$ since 
$$
P^\top Z^\top Y^\top {\scriptsize\left[\begin{array}{cccc|cccc}
&&& &1 & && \\
&&&&&1&& \\
&&&&&&\ddots & \\
&&&&&&& 1\\ \hline
\mu&1&&&&&& \\
 & \mu &\ddots &&&&& \\
&&\ddots &1 &&&& \\
&&&\mu &&& 
\end{array}\right]} Y Z P  
=
P^\top Z^\top {\scriptsize\left[\begin{array}{cccc|cccc}
&&& &1 &-1-\mu && \\
&&&&&1&& \\
&&&&&&\ddots & \\
&&&&&&& 1\\ \hline
-1&1&&&&&& \\
-\mu-\mu^2 & \mu &\ddots &&&&& \\
&&\ddots &1 &&&& \\
&&&\mu &&& 
\end{array}\right]} Z P
$$ $$ =P^\top {\scriptsize\left[\begin{array}{cc|cc}
&&1&\\
&&&I_k\\ \hline
-1&&&\\
&J_{k}(\mu)&&\\
\end{array}\right]} P = 
{\scriptsize\left[\begin{array}{cc|cc}
 & 1 & &  \\ -1 & & & \\ \hline 
& & & I_k\\ 
& & J_{k}(\mu) & 
\end{array}\right]}=
H_2(-1) \oplus H_{2k}(\mu) .
$$
Finally, it is straightforward to check that $X_0=Y Z P$, with $X_0$ as in the statement.
\end{proof}

\begin{lemma}\label{aux_typeII_lemma} 
For $m\geq 1$, the following consistency statements hold:
\begin{enumerate}[{\rm(a)}]
    \item $H_{4m}(\mu) \rightsquigarrow H_2(-1)^{\oplus m}$ and $\rho(H_{4m}(\mu))=m$, for  $\mu\neq 0,-1$; \label{typeII_4m}
    \item $H_{4m+2}(\mu) \rightsquigarrow  H_2(-1)^{\oplus m} \oplus H_2(\mu)$ and $\rho(H_{4m+2}(\mu))=m+\frac12$, for  $\mu\neq 0,\pm 1$; \label{typeII_4m+2_mu}
    \item $H_{4m-2}(-1) \rightsquigarrow  H_2(-1)^{\oplus m}$ and $\rho(H_{4m-2}(-1))=m$. \label{typeII_4m+2_-1}
\end{enumerate}
\end{lemma}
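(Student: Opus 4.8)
The plan is to prove the three claims in order, using the auxiliary relations from Lemma \ref{H2k+4toH2k_lem} together with the Addition and Transitivity laws, and to handle the $\rho$-values by invoking Remark \ref{rho_values} directly. For part \eqref{typeII_4m}, the strategy is to peel off $H_2(-1)$ summands from $H_{4m}(\mu)$ one at a time. Starting from $H_{4m}(\mu)$ and applying Lemma \ref{H2k+4toH2k_lem}\eqref{H2k+4toH2+H2k} repeatedly (with $2k+4$ running down from $4m$), we obtain the chain
$$
H_{4m}(\mu)\rightsquigarrow H_2(-1)\oplus H_{4m-4}(\mu)\rightsquigarrow H_2(-1)^{\oplus 2}\oplus H_{4m-8}(\mu)\rightsquigarrow\cdots\rightsquigarrow H_2(-1)^{\oplus m-1}\oplus H_4(\mu),
$$
using the Addition law at each stage to keep the already-extracted $H_2(-1)$ blocks untouched. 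It then remains to show $H_4(\mu)\rightsquigarrow H_2(-1)^{\oplus 2}$; but $H_4(\mu)$ is congruent to $H_2(\mu)^{\oplus 2}$ (this is the standard congruence decomposition of $H_{2k}(\mu)$ with $\mu\neq(-1)^{k+1}$ into $2\times2$ blocks, valid here since $\mu\neq-1$), and then Lemma \ref{H2k+4toH2k_lem}\eqref{H2mu+H2muToH2(-1)} gives $H_2(\mu)^{\oplus2}\rightsquigarrow H_2(-1)$ only once — so I would instead go down to $H_8(\mu)$ and apply the decomposition $H_8(\mu)\cong H_2(\mu)^{\oplus 4}$, pairing up to get $H_2(-1)^{\oplus 2}$. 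Concatenating via Transitivity finishes the first claim, and $\rho(H_{4m}(\mu))=m$ for $\mu\neq0,-1$ follows from Remark \ref{rho_values}(vii)–(viii) (the value is $m$ whether $\mu=1$ or $\mu\neq\pm1,0$).

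For part \eqref{typeII_4m+2_mu} the idea is the same peeling procedure: apply Lemma \ref{H2k+4toH2k_lem}\eqref{H2k+4toH2+H2k} $m$ times to get
$$
H_{4m+2}(\mu)\rightsquigarrow H_2(-1)^{\oplus m}\oplus H_2(\mu),
$$
which is exactly the asserted relation, so nothing further is needed beyond the chain itself. The $\rho$-value $m+\frac12$ for $\mu\neq0,\pm1$ comes from Remark \ref{rho_values}(viii): $\rho(H_{4m+2}(\mu))=\rho(H_{2(2m+1)}(\mu))=\frac{2m+1}{2}=m+\frac12$. For part \eqref{typeII_4m+2_-1}, note that $\mu=-1=(-1)^{k+1}$ exactly when $k$ is even, so $H_{4m-2}(-1)=H_{2(2m-1)}(-1)$ is a genuine Type-II canonical block (with $k=2m-1$ odd, hence $(-1)^{k+1}=1\neq-1$). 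I would again use Lemma \ref{H2k+4toH2k_lem}\eqref{H2k+4toH2+H2k} to peel: $H_{4m-2}(-1)\rightsquigarrow H_2(-1)\oplus H_{4m-6}(-1)\rightsquigarrow\cdots\rightsquigarrow H_2(-1)^{\oplus m-1}\oplus H_2(-1)=H_2(-1)^{\oplus m}$, since $H_{4m-2}(-1)=H_{2(2m-1)}(-1)$ reduces in steps of $4$ in the first index, terminating at $H_2(-1)$ when $m$ steps have been taken (the last summand extracted is itself $H_2(-1)$). The $\rho$-value $\rho(H_{4m-2}(-1))=m$ is Remark \ref{rho_values}(vi).

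The main obstacle I anticipate is purely bookkeeping: making sure the recursion in Lemma \ref{H2k+4toH2k_lem}\eqref{H2k+4toH2+H2k} terminates at the right base case in each of the three parities — $H_4(\mu)$ versus $H_8(\mu)$ versus $H_2(\mu)$ versus $H_2(-1)$ — and that the congruence $H_{2k}(\mu)\cong H_2(\mu)^{\oplus k}$ (needed in part (a) to convert a single leftover block into paired $2\times2$ blocks that can be collapsed by Lemma \ref{H2k+4toH2k_lem}\eqref{H2mu+H2muToH2(-1)}) is applicable, i.e. that the relevant $\mu$ is not the forbidden value $(-1)^{k+1}$. Since in (a) we only ever need this decomposition for blocks $H_{2k}(\mu)$ with $\mu\neq\pm1$ (the case $\mu=1$ is better handled by stopping the peel at $H_4(1)$ and decomposing $H_4(1)\cong H_2(1)^{\oplus2}$, where $1\neq(-1)^{3}=-1$, so it is legitimate), no genuine difficulty arises — every intermediate block stays in the admissible range. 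Everything else reduces to an induction on $m$ with the Addition and Transitivity laws, exactly parallel to the proof of Lemma \ref{type0_lem}.
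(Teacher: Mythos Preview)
Your arguments for parts \eqref{typeII_4m+2_mu} and \eqref{typeII_4m+2_-1} are correct and coincide with the paper's proof: iterated application of Lemma~\ref{H2k+4toH2k_lem}\eqref{H2k+4toH2+H2k}, together with the Addition and Transitivity laws, gives the stated reductions, and the $\rho$-values come from Remark~\ref{rho_values}.

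Part \eqref{typeII_4m} contains a genuine error. After peeling $m-1$ times you reach $H_2(-1)^{\oplus m-1}\oplus H_4(\mu)$, and to arrive at $H_2(-1)^{\oplus m}$ you need only $H_4(\mu)\rightsquigarrow H_2(-1)$, not $H_4(\mu)\rightsquigarrow H_2(-1)^{\oplus 2}$ as you write. More seriously, your proposed fix relies on the claim that $H_{2k}(\mu)$ is congruent to $H_2(\mu)^{\oplus k}$, and this is \emph{false}: both are already in CFC (for admissible $\mu$), they are distinct CFCs when $k>1$, and by Theorem~\ref{cfc_th} the CFC is unique up to permutation, so they cannot be congruent. (In your $\mu=1$ remark the situation is even worse, since $H_2(1)$ is not a canonical Type-II block at all: for $k=1$ the forbidden value is $(-1)^{k+1}=1$.) The paper's proof avoids all of this by simply peeling \emph{one more time}: Lemma~\ref{H2k+4toH2k_lem}\eqref{H2k+4toH2+H2k} is stated for $k\geq 0$, and with $k=0$ it gives $H_4(\mu)\rightsquigarrow H_2(-1)\oplus H_0(\mu)=H_2(-1)$, using the convention that $H_0(\mu)$ is the empty matrix. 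So the chain
$$
H_{4m}(\mu)\rightsquigarrow H_2(-1)\oplus H_{4m-4}(\mu)\rightsquigarrow\cdots\rightsquigarrow H_2(-1)^{\oplus m}
$$
terminates directly, with no need for any congruence decomposition.
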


\begin{proof} The second claim in each item is given in  Remark~\ref{rho_values}. For the first claims we will use Lemma~\ref{H2k+4toH2k_lem} (\ref{H2k+4toH2+H2k}) repeatedly:
\begin{enumerate}[(a)]
\item $ H_{4m}(\mu) \rightsquigarrow H_{2}(-1)\oplus H_{4m-4}(\mu) \rightsquigarrow \cdots  \rightsquigarrow H_2(-1)^{\oplus m}. $
\item $H_{4m+2}(\mu) \rightsquigarrow H_{2}(-1)\oplus H_{4m-2}(\mu) \rightsquigarrow \cdots \rightsquigarrow H_{2}(-1)^{\oplus m}\oplus H_{2}(\mu).$
\item For $m=1$ is trivial. If $m \geq  2$ then $H_{4m-2}(-1) \rightsquigarrow H_{2}(-1)\oplus H_{4m-6}(-1) \rightsquigarrow \cdots \rightsquigarrow H_{2}(-1)^{\oplus m}. $
\end{enumerate}
\end{proof}

Now we are ready to prove the converse of Lemma~\ref{necessary_th} when only Type-II blocks are involved in the CFC of $A$. In other words, if $A$ is a complex square matrix  whose {\rm CFC} has only Type-II blocks and  $m \leq  \rho(A)$  then  $A \rightsquigarrow H_2(-1)^{\oplus m}$. This is a direct consequence of Lemma~\ref{directsum.lemma} and the following result.

\begin{theorem}\label{Hs.th}
For a complex square matrix $A$ whose {\rm CFC} has only Type-II blocks, one of the following is satisfied:   
\begin{enumerate}[{\rm 1.}]
\item $\rho(A)$ is an integer and  $ A\rightsquigarrow H_2(-1)^{\oplus \rho(A)}$.
\item  $\rho(A)-\frac{1}{2}$ is an integer and   
$A \rightsquigarrow H_2(-1)^{\oplus  \rho(A)-\frac12} \oplus H_2(\mu)$, for some $\mu\neq 0,\pm 1$. 
\end{enumerate}
\end{theorem}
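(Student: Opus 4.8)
The plan is to mimic the structure of the proof of Theorem \ref{Js_th}, working block-by-block and then applying the Addition and Transitivity laws. First I would reduce, using the Permutation law, to a matrix $A$ in CFC consisting only of Type-II blocks, and sort these blocks into three families according to the classification of $\rho$-values in Remark \ref{rho_values}: blocks $H_{4m}(1)$ and (more generally) $H_{4m}(\mu)$ with $\mu\neq\pm1,0$ of ``size $\equiv 0\pmod 4$'' whose $\rho$ is an integer; blocks $H_{4m+2}(\mu)$ with $\mu\neq0,\pm1$ whose $\rho$ is a half-integer; and blocks $H_{4m-2}(-1)$ whose $\rho$ is again an integer. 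Concretely, write
$$
A=\left(\bigoplus_{i=1}^{p}H_{4a_i}(\mu_i)\right)\oplus\left(\bigoplus_{j=1}^{q}H_{4b_j+2}(\nu_j)\right)\oplus\left(\bigoplus_{k=1}^{r}H_{4c_k-2}(-1)\right),
$$
with $\mu_i\neq0,-1$, $\nu_j\neq0,\pm1$, $a_i,b_j,c_k\geq1$. (Here I am allowing $\mu_i=1$; Lemma \ref{aux_typeII_lemma}(a) covers $\mu_i\neq0,-1$ uniformly, so the case $\mu_i=1$ needs no separate treatment.) By the additive property \eqref{additive} of $\rho$ together with Remark \ref{rho_values}, $\rho(A)=\sum a_i+\sum\bigl(b_j+\tfrac12\bigr)+\sum c_k=\bigl(\sum a_i+\sum b_j+\sum c_k\bigr)+\tfrac{q}{2}$, so whether $\rho(A)$ or $\rho(A)-\tfrac12$ is an integer is governed by the parity of $q$, the number of ``half-integer'' (size $\equiv 2\pmod 4$, $\mu\neq\pm1$) blocks.

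Next I would apply Lemma \ref{aux_typeII_lemma} block-wise and use the Addition law to obtain
$$
A\rightsquigarrow H_2(-1)^{\oplus(\sum a_i+\sum c_k)}\ \oplus\ \bigoplus_{j=1}^{q}\Bigl(H_2(-1)^{\oplus b_j}\oplus H_2(\nu_j)\Bigr)
= H_2(-1)^{\oplus(\sum a_i+\sum b_j+\sum c_k)}\ \oplus\ \bigoplus_{j=1}^{q}H_2(\nu_j).
$$
It remains to absorb the leftover $\bigoplus_{j=1}^{q}H_2(\nu_j)$. If $q$ is even, pair the $H_2(\nu_j)$'s up and apply Lemma \ref{H2k+4toH2k_lem}(ii) when the two parameters in a pair differ and Lemma \ref{H2k+4toH2k_lem}(iii) when they coincide, each pair yielding one further $H_2(-1)$; combining via Addition and Transitivity gives $A\rightsquigarrow H_2(-1)^{\oplus\rho(A)}$ (and $\rho(A)$ is an integer, consistent with $q$ even). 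If $q$ is odd, do the same with the first $q-1$ of them, leaving one stray $H_2(\nu_q)$ untouched; this yields $A\rightsquigarrow H_2(-1)^{\oplus(\rho(A)-\frac12)}\oplus H_2(\nu_q)$ with $\nu_q\neq0,\pm1$, which is exactly case 2 of the statement. All the $\rightsquigarrow$ steps used here are $\rho$-invariant, so the exponents of $H_2(-1)$ come out as claimed without any further bookkeeping; a one-line $\rho$-count at the end confirms it.

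The main obstacle is a subtle one of \emph{parameter matching} in the leftover step: Lemma \ref{H2k+4toH2k_lem}(ii) requires the two parameters in a pair to be distinct \emph{and} both $\neq-1$, while part (iii) requires them equal and $\neq\pm1$. Since all $\nu_j\neq0,\pm1$ by construction, both ``$\neq-1$'' and ``$\neq\pm1$'' hold automatically, so for any pair we may apply (ii) or (iii) according as the two parameters are distinct or equal — there is no configuration of parameters that escapes both lemmas. Hence the pairing argument always goes through regardless of how the $\nu_j$ are distributed, and the only genuine case split is the global parity of $q$. One should also double-check the degenerate counts $p=q=r=0$ (then $A$ is empty, $\rho(A)=0$, and case 1 holds trivially) and $q=1$ with $p=r=0$ (then $A=H_2(\nu_1)$ itself, $\rho(A)=\tfrac12$, and case 2 holds with the empty direct sum of $H_2(-1)$'s), but these are immediate.
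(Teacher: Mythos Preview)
Your proposal is correct and follows essentially the same route as the paper's proof: the same three-family decomposition of the Type-II blocks, the same computation of $\rho(A)$ via additivity, the same block-wise application of Lemma~\ref{aux_typeII_lemma}, and the same pairing of the leftover $H_2(\nu_j)$'s via Lemma~\ref{H2k+4toH2k_lem}(ii)--(iii) according to the parity of $q$. The only slip is the constraint $b_j\geq1$, which should read $b_j\geq0$ (as the paper has it) so that bare $H_2(\nu)$ blocks are included --- your own degenerate-case check with $A=H_2(\nu_1)$ already presumes this.
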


\begin{proof} The proof follows the same steps as the one of Theorem \ref{Js_th}. First, we may assume  that $A$ is in CFC. In the conditions of the statement, we can write $A$ as 
\begin{eqnarray*} 
A= 
\left(\bigoplus_{h=1}^{\alpha}H_{4\widetilde m_h}(\widetilde\mu_h)\right)
\oplus  \left(\bigoplus_{i=1}^{\beta} H_{4  \widehat m_i+2} (\widehat\mu_i)\right)
\oplus \left(\bigoplus_{j=1}^{\gamma} H_{4  \widecheck m_j -2}(-1) \right),
\end{eqnarray*}
with $\alpha, \beta,\gamma \geq 0$; $\widetilde m_1,\ldots,\widetilde m_\alpha\geq 1$; $\widetilde \mu_1,\ldots,\widetilde \mu_\alpha \neq 0,-1$; $\widehat m_1,\ldots,\widehat m_\beta \geq 0$; $\widehat\mu_1,\ldots,\widehat\mu_\beta \neq 0, \pm 1$; and
with $\widecheck m_1,\ldots,\widecheck m_\gamma \geq 1$.
Let us also define  
$$\widetilde m= \widetilde m_1+\cdots+ \widetilde m_\alpha; \quad 
\widehat m= \widehat m_1+\cdots+ \widehat m_\beta; \quad 
\widecheck m= \widecheck m_1+\cdots+ \widecheck m_\gamma.$$

From the additive property of $\rho$ (namely, \eqref{additive}) and Lemma~\ref{aux_typeII_lemma} it follows that 
\begin{eqnarray} \label{rho(A)_II}
\rho(A) &=&  
\sum_{h=1}^{\alpha}\rho(H_{4 \widetilde m_h} (\widetilde \mu_h))+
\sum_{i=1}^{\beta}\rho(H_{4 \widehat m_i+2} (\widehat \mu_i))+
\sum_{j=1}^{\gamma}\rho(H_{4 \widecheck m_j-2} (-1)) \\ \nonumber
&=&
\sum_{h=1}^{\alpha}\widetilde m_h+
\sum_{i=1}^{\beta}(\widehat m_i+\frac12)+
\sum_{j=1}^{\gamma}\widecheck m_j 
=\widetilde m+ \widehat m+\widecheck m+\frac{\beta}{2}.
\end{eqnarray}
Depending on the parity of $\beta$,  either $\rho(A)$ or $\rho(A)-\frac12$ is an integer.

Let us prove the second part of the statement. From Lemmas~\ref{suma_directa_rightsquigarrows_lem} and~\ref{aux_typeII_lemma}
$$\begin{array}{lll}
\displaystyle \bigoplus_{h=1}^{\alpha}H_{4\widetilde m_h}(\widetilde\mu_h) {\rightsquigarrow}  
\bigoplus_{h=1}^{\alpha}H_2(-1)^{\oplus \widetilde m_h}; &
 \displaystyle \bigoplus_{i=1}^{\beta} H_{4  \widehat m_i+2} (\widehat\mu_i) 
\rightsquigarrow \bigoplus_{i=1}^{\beta} \Big(H_2(-1)^{\oplus\widehat m_i}\oplus H_2(\widehat\mu_i)\Big); \\ 
\displaystyle \bigoplus_{j=1}^{\gamma} H_{4  \widecheck m_j -2}(-1)
 {\rightsquigarrow}  \displaystyle
\bigoplus_{j=1}^{\gamma} H_2(-1)^{\oplus \widecheck m_j} .
\end{array}$$
Then 
$$
A{\rightsquigarrow}
H_2(-1)^{\oplus \widetilde m}\oplus \bigoplus_{j=1}^{\beta} \Big(H_2(-1)^{\oplus \widehat m_i} \oplus H_2(\widehat \mu_i)\Big)  \oplus \bigoplus_{j=1}^{\gamma} H_2(-1)^{\oplus \widecheck m_j}=H_2(-1)^{\oplus \widetilde m+  \widehat m+ \widecheck m} \oplus \bigoplus_{j=1}^{\beta}  H_2(\widehat \mu_i).
$$
There are two possible situations:
\begin{enumerate}
    \item If $\beta$ is even, then $\rho(A)$ is an integer and
    $$
     A \rightsquigarrow H_2(-1)^{\oplus \widetilde m+  \widehat m+ \widecheck m} \oplus \bigoplus_{j=1}^{\beta}  H_2(\widehat \mu_i) \rightsquigarrow H_2(-1)^{\oplus \widetilde m+  \widehat m+ \widecheck m+\frac{\beta}{2}}=H_2(-1)^{\oplus \rho(A)}.
    $$
\item If $\beta$ is odd, then $\rho(A)-\frac12$ is an integer and
    $$
     A \rightsquigarrow H_2(-1)^{\oplus \widetilde m+  \widehat m+ \widecheck m} \oplus \bigoplus_{j=1}^{\beta}  H_2(\widehat \mu_i) \rightsquigarrow H_2(-1)^{\oplus \widetilde m+  \widehat m+ \widecheck m+\frac{\beta-1}{2}}\oplus H_2(\widehat\mu_\beta)=H_2(-1)^{\oplus \rho(A)-\frac12}\oplus H_2(\widehat\mu_\beta).
    $$
\end{enumerate}
In both cases, we have used Lemma~\ref{H2k+4toH2k_lem}~(\ref{H2mu+H2nuToH2(-1)})-(\ref{H2mu+H2muToH2(-1)}) and (\ref{rho(A)_II}).
\end{proof}

\begin{theorem}\label{sufficient-typeII.th}
Let $A$ be a complex square matrix whose {\rm CFC} is a direct sum of Type-II blocks. Then $X^\top A X = H_2(-1)^m$ is consistent if and only if $m\leq \rho(A)$. 
\end{theorem}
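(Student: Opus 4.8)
The plan is to follow verbatim the template already used for the Type-0 case in Theorem~\ref{sufficiency-type0_th}, since all the substantive work has been packaged into the structural result Theorem~\ref{Hs.th} and the auxiliary Lemmas~\ref{H2k+4toH2k_lem} and~\ref{aux_typeII_lemma}. Necessity is immediate: if $X^\top AX=H_2(-1)^{\oplus m}$ is consistent, then Lemma~\ref{necessary_th} gives $m\leq\rho(A)$ with nothing further to check.

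For sufficiency, suppose $m\leq\rho(A)$. The key elementary observation is that, because $m$ is a nonnegative integer, this inequality is equivalent to $m\leq\lfloor\rho(A)\rfloor$; hence, by the Elimination law (Lemma~\ref{basic_laws_lemma}~\eqref{elim_law}), it suffices to prove $A\rightsquigarrow H_2(-1)^{\oplus\lfloor\rho(A)\rfloor}$. Now I would split according to the two alternatives of Theorem~\ref{Hs.th}. If $\rho(A)$ is an integer, then $\lfloor\rho(A)\rfloor=\rho(A)$ and Theorem~\ref{Hs.th}(1) delivers the relation directly. If instead $\rho(A)-\tfrac12$ is an integer, then $\lfloor\rho(A)\rfloor=\rho(A)-\tfrac12$, and Theorem~\ref{Hs.th}(2) together with the Transitivity and Elimination laws gives
$$
A\rightsquigarrow H_2(-1)^{\oplus\rho(A)-\frac12}\oplus H_2(\mu)\rightsquigarrow H_2(-1)^{\oplus\rho(A)-\frac12},
$$
so that the residual $H_2(\mu)$ summand is harmlessly discarded.

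I do not expect a genuine obstacle here: the difficulty lies entirely upstream, in establishing Theorem~\ref{Hs.th} (and, inside it, the concatenations of $\rho$-invariant relations built from Lemma~\ref{H2k+4toH2k_lem}). The only point that deserves to be spelled out in the write-up is the reduction $m\leq\rho(A)\iff m\leq\lfloor\rho(A)\rfloor$, which is precisely what makes the leftover $H_2(\mu)$ block irrelevant via the Elimination law, and which mirrors the role played by the leftover $J_2(0)$ block in the proof of Theorem~\ref{sufficiency-type0_th}.
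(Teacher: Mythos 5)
Your proposal is correct and follows essentially the same route as the paper: the paper also deduces necessity from Lemma~\ref{necessary_th} and sufficiency from Theorem~\ref{Hs.th}, invoking the same argument as in Theorem~\ref{sufficiency-type0_th} (reduce to $m\leq\lfloor\rho(A)\rfloor$ and discard the leftover $H_2(\mu)$ block via the Elimination law). No gaps.
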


\begin{proof}
The necessity was established  in Lemma \ref{necessary_th}. The sufficiency is  consequence of Theorem~\ref{Hs.th}, with the argumentation  being analogous to the one given in the proof of  Theorem~\ref{sufficiency-type0_th}.
\end{proof}

\section{The case where CFC($A$) is a direct sum of Type-I blocks}\label{typeI_sec}

As in the precedent two sections, we first provide a collection of  auxiliary statements on the consistency.

\begin{lemma}
\label{Gamma2n+4->Gamma2n+H2(-1)}
\label{Gamman+1ton_lem} 
\begin{enumerate}[{\rm(i)}]

    \item \label{G2k+1toG2k}
    $\Gamma_{2k+1} \overset{X_0}{\rightsquigarrow} \Gamma_{2k}$ for $k\geq 1$ and 
    $X_0=\left[\begin{array}{cc}
        \begin{matrix}
        0 & \cdots & 0 \\ \hline
        & & \mathfrak i \vspace{-2mm}\\  & \hspace{-1mm}\iddots\hspace{-1mm} & \vspace{-2mm}\\ \mathfrak i & & 
        \end{matrix}
    \end{array} \right]_{(2k+1)\times (2k)}=\mathfrak i \begin{bmatrix}
     e_{2k+1} & e_{2k} & \cdots & e_3 & e_2
    \end{bmatrix}$.
    
\item   \label{G2k+4toG2k+H2(-1)}
$ \Gamma_{2k+4}\overset{X_0}{\rightsquigarrow}  H_2(-1)\oplus \Gamma_{2k}$, for $k\geq 1$ and $X_0={\scriptsize\left[\begin{array}{cc|c}
-1 & 0 &   \\
0 & 0  &  \\ \hline
 &  & I_{2k} \\ \hline
0 & 0 &   \\
0 & 1  &  \\
\end{array}\right]_{(2k+4)\times(2k+2)}}
$.

\item \label{G4toH2(-1)}
$\Gamma_{4} \overset{X_0}{\rightsquigarrow} H_2(-1)\oplus \Gamma_1$, for $X_0=\begin{smat}
\mathfrak i & 0 & 0  \\
0 & 0  & 0 \\
0 &  0 & \mathfrak i \\
0 & \mathfrak i & 0\\
 \end{smat}$.

\end{enumerate}
\end{lemma}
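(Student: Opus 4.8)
Each of the three statements exhibits an explicit matrix $X_0$ and asserts that it solves the displayed equation, so the proof is just the verification that $X_0^\top\Gamma_nX_0$ equals the prescribed right-hand side. The first step I would take is to record the entrywise description of the Type-I canonical block coming from its definition in Theorem~\ref{cfc_th}: writing $e_1,\dots,e_n$ for the standard basis of $\CC^n$,
\begin{equation*}
(\Gamma_n)_{ij}=\begin{cases}(-1)^{n+i}, & i+j\in\{n+1,n+2\},\\ 0, & \text{otherwise},\end{cases}
\end{equation*}
so that the nonzero entries of $\Gamma_n$ sit precisely on the anti-diagonal $i+j=n+1$ and on the parallel diagonal $i+j=n+2$ (in particular $\Gamma_1=[1]$). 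With this in hand, each claim becomes a matter of substitution and sign-tracking.

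\textbf{Statements (i) and (iii).} In (i) the columns of $X_0$ are $\mathfrak i\,e_{2k+1},\mathfrak i\,e_{2k},\dots,\mathfrak i\,e_2$, so the $(a,b)$ entry of $X_0^\top\Gamma_{2k+1}X_0$ is $\mathfrak i^{2}(\Gamma_{2k+1})_{2k+2-a,\,2k+2-b}=-(\Gamma_{2k+1})_{2k+2-a,\,2k+2-b}$ for $1\le a,b\le 2k$. Now $(2k+2-a)+(2k+2-b)$ lies in $\{2k+2,2k+3\}$ exactly when $a+b\in\{2k+1,2k+2\}$, and for the odd value $n=2k+1$ one has $(-1)^{n+(2k+2-a)}=(-1)^{a+1}$, so the $(a,b)$ entry equals $(-1)^{a}=(-1)^{2k+a}=(\Gamma_{2k})_{ab}$; hence $X_0^\top\Gamma_{2k+1}X_0=\Gamma_{2k}$. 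Statement (iii) I would treat as the degenerate instance of the same computation: the columns of $X_0$ are $\mathfrak i\,e_1,\mathfrak i\,e_4,\mathfrak i\,e_3$, so $X_0^\top\Gamma_4X_0$ is $-1$ times the submatrix of $\Gamma_4$ on rows and columns $1,4,3$ taken in that order, and since $(\Gamma_4)_{14}=-1$, $(\Gamma_4)_{41}=1$, $(\Gamma_4)_{33}=-1$ while the remaining entries in those rows and columns vanish, this is $\left[\begin{smallmatrix}0&1&0\\-1&0&0\\0&0&1\end{smallmatrix}\right]=H_2(-1)\oplus\Gamma_1$.

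\textbf{Statement (ii).} Here I would split the columns of $X_0$ into the two distinguished ones $-e_1$ and $e_{2k+4}$, together with the block $[\,e_3\ e_4\ \cdots\ e_{2k+2}\,]$, and compute $M:=X_0^\top\Gamma_{2k+4}X_0$ in the corresponding $2+2k$ block partition. There are three things to check: (a) the lower-right $2k\times 2k$ block of $M$ is the central submatrix of $\Gamma_{2k+4}$ obtained by deleting its first two and its last two rows and columns, and by the entry formula with $n=2k+4$ (so $(-1)^{n+i}=(-1)^{i}$) this submatrix is exactly $\Gamma_{2k}$; (b) the leading $2\times2$ block of $M$ is $\left[\begin{smallmatrix}(\Gamma_{2k+4})_{11}&-(\Gamma_{2k+4})_{1,2k+4}\\-(\Gamma_{2k+4})_{2k+4,1}&(\Gamma_{2k+4})_{2k+4,2k+4}\end{smallmatrix}\right]=\left[\begin{smallmatrix}0&1\\-1&0\end{smallmatrix}\right]=H_2(-1)$, using $(\Gamma_{2k+4})_{1,2k+4}=(-1)^{2k+5}=-1$, $(\Gamma_{2k+4})_{2k+4,1}=(-1)^{4k+8}=1$, and that the two diagonal corners vanish; and (c) the off-diagonal blocks of $M$ are zero, because the index sums appearing in them are of the form $3+\ell$ or $2k+6+\ell$ with $1\le\ell\le2k$, and none of these belongs to $\{2k+5,2k+6\}$. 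Combining (a), (b) and (c) yields $M=H_2(-1)\oplus\Gamma_{2k}$.

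\textbf{Main difficulty.} There is no real conceptual obstacle; the most involved step is the block computation for (ii), and even there the only delicate point is keeping track of the parity-dependent sign $(-1)^{n+i}$ and of how it is absorbed --- either by the scalar $\mathfrak i$ (which enters precisely in the two cases where one reads data off an odd-sized $\Gamma$-block) or by the explicit $-1$ placed in $X_0$ in statement (ii). As a consistency check with the global strategy, one should verify that all three relations are $\rho$-invariant: $\rho(\Gamma_{2k+1})=\rho(\Gamma_{2k})=\frac{2k+1}{4}$, $\rho(\Gamma_{2k+4})=\rho(H_2(-1)\oplus\Gamma_{2k})=\frac{2k+5}{4}$, and $\rho(\Gamma_4)=\rho(H_2(-1)\oplus\Gamma_1)=\frac{5}{4}$.
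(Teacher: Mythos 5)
Your proposal is correct and takes essentially the same approach as the paper: each part is proved by directly verifying that the given $X_0$ satisfies the equation (the paper declares (i) and (iii) straightforward and, for (ii), organizes the same verification by factoring $X_0=ZP$ into a column-selection matrix and a signed permutation, which is exactly the block partition into $-e_1$, $e_{2k+4}$, and $[\,e_3\ \cdots\ e_{2k+2}\,]$ that you use). Your entrywise formula for $(\Gamma_n)_{ij}$ and the accompanying sign bookkeeping are accurate, so the computation goes through as claimed.
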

\begin{proof} Claims~(\ref{G2k+1toG2k}) and~(\ref{G4toH2(-1)}) are straightforward. In order to prove claim~(\ref{G2k+4toG2k+H2(-1)}), consider the matrix
$$
Z= 
\begin{bmatrix}
e_1 & e_3 & e_4 & \cdots & e_{2k+1}& e_{2k+2} & e_{2k+4}\end{bmatrix},
$$ 
obtained from $I_{2k+4}$ after deleting the second and the second to last column;
and the  $(2k+2)\times(2k+2)$ matrix  
$${\scriptsize P=\left[\begin{array}{c|cc}
-1 & &  \\ \hline
  & & I_{2k}   \\ 
  &1 &   \\  
\end{array}\right]}.
$$ Then
$ \Gamma_{2k+4} \overset{ZP}{\rightsquigarrow} H_2(-1)\oplus \Gamma_{2k} $, since 
$$
P^\top Z^\top \Gamma_{2k+4}  Z  P =
P^\top \left[\begin{array}{c|c|c}
 & & -1 \\ \hline
 & \Gamma_{2k} & \\ \hline
1 & &  
\end{array}\right] P =
H_2(-1) \oplus \Gamma_{2k}.
$$
Finally, it is straightforward to check that $X_0=Z P$, with $X_0$ as in the statement.
\end{proof}

\subsection
{A direct sum of $\Gamma_2$ blocks }\label{gamma_2.sec} 

The  main obstacle for the necessary condition of Lemma~\ref{necessary_th} to be also sufficient is the presence in CFC($A$) of either blocks $\Gamma_1$ or $\Gamma_2$. Regarding $\Gamma_2$, we will see in Theorem~\ref{gamma2toh2(-1).th} that, if the  CFC of $A$ is $\Gamma_2^{\oplus k}$, with $k\geq 3$, then \break{$\Gamma_2^{\oplus k} \rightsquigarrow\hspace{-3ex}/ \quad H_2(-1)^{\oplus \left\lfloor\rho(A)\right\rfloor}$}.

To prove Theorem~\ref{gamma2toh2(-1).th} we will use the following three  results. The first one is well-known, and we omit the proof.

\begin{lemma}\label{sym-skew.lem}
Let $A+B$ be skew-symmetric, with $A$ being symmetric and $B$ being skew-symmetric. Then $A=0$.
\end{lemma}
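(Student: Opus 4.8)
\textbf{Proof proposal for Lemma~\ref{sym-skew.lem}.}

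The statement to prove is elementary: if $A+B$ is skew-symmetric, with $A$ symmetric and $B$ skew-symmetric, then $A=0$. The plan is a one-line argument exploiting the uniqueness of the decomposition of a matrix into its symmetric and skew-symmetric parts. First I would write $C:=A+B$ and transpose: since $A^\top=A$ and $B^\top=-B$, we get $C^\top=A-B$. By hypothesis $C^\top=-C$, so $A-B=-(A+B)$, which immediately gives $2A=0$, hence $A=0$. Equivalently, one may observe that any square matrix decomposes uniquely as (symmetric part) $+$ (skew-symmetric part), namely $C=\tfrac12(C+C^\top)+\tfrac12(C-C^\top)$; here the symmetric part of $C$ is $A$ and the symmetric part of a skew-symmetric matrix is $0$, forcing $A=0$.

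There is essentially no obstacle: the whole content is a transpose and a cancellation of $2$ over $\CC$ (characteristic zero, so dividing by $2$ is harmless). The only point worth a word of care is that the decomposition into symmetric and skew-symmetric parts is unique precisely because the characteristic is not $2$, which is automatic here since we work over $\CC$; this is exactly why the authors flag the result as well known and omit the proof in the excerpt. I would simply record the short computation above and move on, since the real work in this section lies in the subsequent results about $\Gamma_2^{\oplus k}$ that invoke this lemma.
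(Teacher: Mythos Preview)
Your argument is correct and complete; the transpose computation $C^\top=A-B=-(A+B)=-C$ giving $2A=0$ is exactly the standard one-line justification. The paper itself omits the proof of this lemma as well known, so there is nothing to compare against---your write-up simply supplies the missing (and expected) verification.
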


\begin{lemma}\label{yrank.lem}
Let $Y\in\CC^{r\times s}$ be such that $\rank Y>\frac{s}{2}$. Then $YY^\top\neq0$.
\end{lemma}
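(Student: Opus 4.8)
\textbf{Proof plan for Lemma \ref{yrank.lem}.}

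The plan is to argue by contradiction: suppose $YY^\top = 0$. The key observation is that $YY^\top = 0$ means that the columns of $Y^\top$ — equivalently, the rows of $Y$ — all lie in the null space of $Y$ itself, or more symmetrically, that the row space of $Y$ is contained in the (right) null space of $Y^\top$, i.e. in the orthogonal complement (with respect to the bilinear form $x^\top y$, not the Hermitian one) of the row space of $Y$. Concretely, if we let $\cR = \operatorname{row}(Y) \subseteq \CC^s$, then $YY^\top = 0$ says precisely that $\cR \subseteq \cR^{\perp}$, where $\perp$ is taken with respect to the standard symmetric bilinear form on $\CC^s$.

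Now I would invoke the standard dimension count for this bilinear form. Since the standard symmetric bilinear form on $\CC^s$ is non-degenerate, for any subspace $\cR \subseteq \CC^s$ we have $\dim \cR + \dim \cR^{\perp} = s$. Combining this with $\cR \subseteq \cR^{\perp}$ gives $\dim\cR \le \dim\cR^{\perp} = s - \dim\cR$, hence $\dim \cR \le s/2$. But $\dim\cR = \rank Y$ (the row rank equals the rank), so $\rank Y \le s/2$, contradicting the hypothesis $\rank Y > s/2$. Therefore $YY^\top \neq 0$, as claimed.

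I do not expect any real obstacle here; the only point requiring a little care is that the relevant bilinear form is the (complex-)symmetric one $x^\top y$ rather than the Hermitian inner product, so that "$\cR \subseteq \cR^\perp$" is genuinely possible for nonzero $\cR$ — indeed that is the whole content of the lemma, that it forces $\cR$ to be small. One should make sure to use the identity $\dim\cR + \dim\cR^\perp = s$, which holds for this non-degenerate bilinear form on $\CC^s$ even though $\cR \cap \cR^\perp$ need not be zero. Alternatively, and perhaps more transparently for a short write-up, one can phrase the argument purely in terms of ranks: $\operatorname{col}(Y^\top) \subseteq \ker Y$ forces $\rank Y^\top \le \dim\ker Y = s - \rank Y$, and since $\rank Y^\top = \rank Y$ this again yields $\rank Y \le s/2$, the desired contradiction.
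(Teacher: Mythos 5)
Your argument is correct, and it is essentially the paper's proof: the paper argues in one line that $YY^\top=0$ would force the column space of $Y^\top$ (of dimension $\rank Y>\frac{s}{2}$) into $\ker Y$ (of dimension $s-\rank Y<\frac{s}{2}$), which is exactly the rank-based rephrasing you give at the end. Your isotropic-subspace framing ($\cR\subseteq\cR^\perp$ for the symmetric bilinear form, with $\dim\cR+\dim\cR^\perp=s$) is just the same dimension count in different clothing, so no further comment is needed.
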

\begin{proof}
It is a direct consequence of the identity $\rank Y^\top= \rank Y >\frac{s}{2}$ and the fact that $\dim(\ker Y)<\frac{s}{2}$.
\end{proof}

\begin{lemma}\label{rankybound.lem} Let $X^\top$ have $2k$ columns, denoted by $x_i$, for $i=1,\hdots,2k$. Then
$$
\rank \left(X^\top H_2(-1)^{\oplus k} X \right)\leq 2\cdot\rank \begin{bmatrix} x_2&x_4&\hdots&x_{2k} \end{bmatrix}.
$$
\end{lemma}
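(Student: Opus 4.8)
The plan is to exploit the block structure of $H_2(-1)^{\oplus k}$ directly. Write $X^\top$ as having $2k$ columns $x_1,\dots,x_{2k}$, so that $X$ has $2k$ rows which are $x_1^\top,\dots,x_{2k}^\top$. The matrix $H_2(-1)^{\oplus k}$ is block diagonal with $k$ copies of $\left[\begin{smallmatrix}0&1\\-1&0\end{smallmatrix}\right]$; pairing the rows/columns of $X$ accordingly into the blocks $i=1,\dots,k$ (with block $i$ involving indices $2i-1$ and $2i$), a direct computation gives
\[
X^\top H_2(-1)^{\oplus k} X \;=\; \sum_{i=1}^{k}\bigl(x_{2i-1}x_{2i}^\top - x_{2i}x_{2i-1}^\top\bigr).
\]
First I would record this identity, since it is the crux: the product is a sum of rank-$\le 2$ terms, but I want a bound that does not merely give $2k$.

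Next I would factor the right-hand side through the even-indexed columns. Observe that $x_{2i-1}x_{2i}^\top - x_{2i}x_{2i-1}^\top$ has its column space contained in $\mathrm{span}\{x_{2i-1},x_{2i}\}$, which is not yet what we want. Instead, I would note that $\sum_i x_{2i-1}x_{2i}^\top = P^\top Q$ where $P$ has columns $x_1,x_3,\dots,x_{2k-1}$ (the odd columns) and $Q$ has columns $x_2,x_4,\dots,x_{2k}$ (the even columns), suitably arranged; hence
\[
X^\top H_2(-1)^{\oplus k} X \;=\; P^\top Q - Q^\top P,
\]
where $Q = \begin{bmatrix} x_2 & x_4 & \cdots & x_{2k}\end{bmatrix}$. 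Now $\rank(P^\top Q)\le \rank Q$ and $\rank(Q^\top P)\le \rank Q$, so the rank of the difference is at most $2\rank Q$, which is exactly the claimed bound.

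The only point requiring care is getting the index bookkeeping right so that the cross terms assemble cleanly into $P^\top Q - Q^\top P$ with $Q$ being precisely the matrix of even-indexed columns; this is the "main obstacle," though it is entirely routine — it amounts to checking that the $(a,b)$ entry of $X^\top H_2(-1)^{\oplus k}X$ is $\sum_i \bigl((x_a)_{2i-1}(x_b)_{2i} - (x_a)_{2i}(x_b)_{2i-1}\bigr)$ and matching this against the corresponding entry of $P^\top Q - Q^\top P$. Once the identity is in place, the rank bound is immediate from subadditivity of rank and the inequality $\rank(MN)\le\min(\rank M,\rank N)$.
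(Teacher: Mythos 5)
Your proposal is correct, and it takes a genuinely different (and more compact) route than the paper. Both arguments start from the same key identity $X^\top H_2(-1)^{\oplus k}X=\sum_{i=1}^{k}\bigl(x_{2i-1}x_{2i}^\top-x_{2i}x_{2i-1}^\top\bigr)$, but the paper then chooses a maximal linearly independent subset $x_2,\dots,x_{2s}$ of the even-indexed columns, rewrites the remaining even columns as linear combinations of these, and uses bilinearity of $\mathbb{A}(u,v)=uv^\top-vu^\top$ to regroup the sum into $s$ terms, each of rank at most $2$, giving the bound $2s$. You instead factor the whole sum at once: with $P=\begin{bmatrix}x_1&x_3&\cdots&x_{2k-1}\end{bmatrix}$ and $Q=\begin{bmatrix}x_2&x_4&\cdots&x_{2k}\end{bmatrix}$ (both matrices of columns), the identity reads $X^\top H_2(-1)^{\oplus k}X=PQ^\top-QP^\top$ — note this is the correct arrangement of transposes, not $P^\top Q-Q^\top P$ as written in your sketch, which would have the wrong dimensions; since $\rank Q=\rank Q^\top$ this slip does not affect the estimate — and then $\rank(PQ^\top)\le\rank Q$, $\rank(QP^\top)\le\rank Q$, plus subadditivity of rank, give the bound $2\rank Q$ directly. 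Your factorization avoids the basis selection and regrouping entirely, at the cost of nothing; the paper's version makes the ``sum of $s$ skew rank-$\le2$ outer products'' structure explicit, which matches the bilinear-form language it uses, but your argument is a perfectly valid and arguably cleaner proof of the same inequality.
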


\begin{proof}
If $\rank \begin{bmatrix} x_2&x_4&\hdots&x_{2k} \end{bmatrix}=k$ then the inequality is clearly satisfied.
Without loss of generality we may assume that $\rank \begin{bmatrix} x_2&x_4&\hdots&x_{2k} \end{bmatrix}=s$, with $x_2,x_4,\ldots,x_{2s}$  linearly independent vectors. 
Then $$
x_{2s+2}=\sum_{i=1}^{s}\alpha^{(2s+2)}_{2i}x_{2i},\quad\ldots,\quad
x_{2k}=\sum_{i=1}^{s}\alpha^{(2k)}_{2i}x_{2i}\,, \quad \text{ for some } \alpha^{(2s+2)}_{2},\hdots,\alpha^{(2s+2)}_{2s},\hdots,\alpha_{2}^{(2k)},\hdots,\alpha_{2s}^{(2k)}\in\mathbb C.
$$ 
Let $\mathbb{A}$ be represent the bilinear form defined by
$\mathbb{A}(u, v)=u v^\top-v u^\top$ for $u, v\in \mathbb{C}$. Note that $\rank \mathbb{A}(u, v)\leq 2$. Then
\begin{align*}
& X^\top (H_2(-1)^{\oplus k})X = \mathbb{A}(x_1, x_2)+\mathbb{A}(x_3, x_4)+\cdots+\mathbb{A}(x_{2k-1}, x_{2k})\\ 
&\qquad = \mathbb{A}(x_1, x_2)+\mathbb{A}(x_3, x_4)+\cdots+\mathbb{A}(x_{2s-1}, x_{2s})+\mathbb{A}\left(x_{2s+1}, \sum_{i=1}^{s}\alpha^{(2s+2)}_{2i}x_{2i}\right)
+\cdots+\mathbb{A}\left(x_{2k-1}, \sum_{i=1}^{s}\alpha^{(2k)}_{2i}x_{2i}\right)\\
&\qquad = \mathbb{A}(x_1, x_2)+\mathbb{A}(x_3, x_4)+\cdots+\mathbb{A}(x_{2s-1}, x_{2s})+\sum_{i=1}^{s}\alpha^{(2s+2)}_{2i}\mathbb{A}\left(x_{2s+1}, x_{2i}\right)
+\cdots+\sum_{i=1}^{s}\alpha^{(2k)}_{2i}\mathbb{A}\left(x_{2k-1}, x_{2i}\right)\\
&\qquad =\mathbb{A}\left(x_1+\alpha^{(2s+2)}_{2} x_{2s+1}+\cdots+\alpha^{(2k)}_{2} x_{2k-1},\; x_2\right)+\cdots+\mathbb{A}\left(x_{2s-1}+\alpha^{(2s+2)}_{2s} x_{2s+1}+\cdots+\alpha^{(2k)}_{2s} x_{2k-1},\; x_{2s}\right).
\end{align*}
Since this is a sum of $s$ addends, each one having rank at most two, we get the desired inequality.
\end{proof}

Now we will show that, for $A=\Gamma_2^{\oplus k}$, the converse of Lemma~\ref{necessary_th} is not true. Moreover, we are able to characterize the consistency of Eq.~\eqref{maineq} with $A=\Gamma_2^{\oplus k}$ and $B=H_2(-1)^{\oplus m}$.

\begin{theorem}\label{gamma2toh2(-1).th}
$\Gamma_2^{\oplus k} {\rightsquigarrow} H_2(-1)^{\oplus m}$ if and only if $m\leq\displaystyle \frac{2}{3}\,\rho(\Gamma_2^{\oplus k})=\frac{k}{2}$.
\end{theorem}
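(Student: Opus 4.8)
The plan is to establish the two directions of the equivalence separately, with the $\rho$-value $\rho(\Gamma_2^{\oplus k}) = k\cdot\rho(\Gamma_2) = \frac{3k}{4}$ (by Remark~\ref{rho_values}(v) and the additivity \eqref{additive}), so that $\frac23\rho(\Gamma_2^{\oplus k}) = \frac k2$.

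\textbf{Sufficiency ($m\le k/2$ implies consistency).} By the Elimination law it suffices to treat $m = \lfloor k/2\rfloor$. The natural building block is $\Gamma_2^{\oplus 2}\rightsquigarrow H_2(-1)$: indeed $\Gamma_2 \oplus \Gamma_2$ has even size $4$, and since $\Gamma_2$ is congruent to a single Type-I block, $\Gamma_2^{\oplus 2}$ is a $4\times 4$ matrix whose CFC is $\Gamma_2^{\oplus 2}$; one should exhibit an explicit $4\times 2$ matrix $X_0$ with $X_0^\top(\Gamma_2^{\oplus2})X_0 = H_2(-1)$ (a short direct computation, analogous in spirit to Lemma~\ref{Js_lem}(iii) or Lemma~\ref{H2k+4toH2k_lem}(iii)). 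Applying the Addition law $\lfloor k/2\rfloor$ times gives $\Gamma_2^{\oplus 2\lfloor k/2\rfloor} \rightsquigarrow H_2(-1)^{\oplus\lfloor k/2\rfloor}$, and the Elimination law removes the leftover $\Gamma_2$ when $k$ is odd. Hence $\Gamma_2^{\oplus k}\rightsquigarrow H_2(-1)^{\oplus m}$ whenever $m\le k/2$.

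\textbf{Necessity ($m\le k/2$).} This is where the three preliminary lemmas come in, and it is the main obstacle. Suppose $X_0^\top(\Gamma_2^{\oplus k})X_0 = H_2(-1)^{\oplus m}$, where $X_0\in\CC^{2k\times 2m}$. Write $\Gamma_2 = S + N$ where $S = \frac12(\Gamma_2+\Gamma_2^\top)$ is symmetric and $N = \frac12(\Gamma_2-\Gamma_2^\top)$ is skew-symmetric; from the explicit form of $\Gamma_2$, the symmetric part $S$ of $\Gamma_2$ has rank $1$ (cf. the rank computation $\rank(\Gamma_2+\Gamma_2^\top)=1$ underlying \eqref{rankgamma}). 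Accordingly $\Gamma_2^{\oplus k}$ decomposes as a symmetric part $S^{\oplus k}$ of rank $k$ plus a skew-symmetric part. Since $H_2(-1)^{\oplus m}$ is skew-symmetric and $X_0^\top(\Gamma_2^{\oplus k})X_0 = X_0^\top S^{\oplus k}X_0 + X_0^\top N^{\oplus k}X_0$ splits it into a symmetric plus a skew-symmetric summand, Lemma~\ref{sym-skew.lem} forces $X_0^\top S^{\oplus k}X_0 = 0$. Now write $S^{\oplus k} = \frac14 C C^\top$-type factorization: since each $S$ has rank $1$ it can be written (up to scalars) as $v v^\top$ for a suitable vector $v$, so $S^{\oplus k} = V V^\top$ with $V\in\CC^{2k\times k}$ of rank $k$ (one column per block). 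Then $0 = X_0^\top V V^\top X_0 = (V^\top X_0)^\top(V^\top X_0)$, i.e. $Y Y^\top = 0$ with $Y = (V^\top X_0)^\top\in\CC^{2m\times k}$. By Lemma~\ref{yrank.lem} (with $r = 2m$, $s = k$) this forces $\rank Y \le k/2$.

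\textbf{Finishing the necessity via Lemma~\ref{rankybound.lem}.} It remains to convert the bound $\rank(V^\top X_0)\le k/2$ into $m\le k/2$. The idea is that the columns $V^\top X_0$ controls are exactly half of the columns of $X_0^\top$ — namely the ``even-indexed'' ones relative to the block structure of $H_2(-1)^{\oplus k}$ — because $V$ picks out, within each $\Gamma_2$-block, the single coordinate direction carrying the rank-one symmetric part. One should choose the factorization $V$ so that $V^\top X_0$ equals $\begin{bmatrix} x_2 & x_4 & \cdots & x_{2k}\end{bmatrix}$ (or a row-scaled version thereof) in the notation of Lemma~\ref{rankybound.lem}, where the $x_i$ are the columns of $X_0^\top$. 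Then Lemma~\ref{rankybound.lem} gives $2m = \rank H_2(-1)^{\oplus m} = \rank(X_0^\top H_2(-1)^{\oplus k}X_0)\le 2\rank\begin{bmatrix}x_2&x_4&\cdots&x_{2k}\end{bmatrix} = 2\rank(V^\top X_0)\le 2\cdot\frac k2 = k$, hence $m\le k/2$. The delicate point — and the step I expect to require the most care — is verifying that the rank-one structure of the symmetric part of $\Gamma_2$ indeed aligns with the block pattern of $H_2(-1)^{\oplus k}$ so that Lemma~\ref{rankybound.lem} applies with precisely the columns $x_2,x_4,\dots,x_{2k}$; this may require a preliminary congruence normalization of $\Gamma_2$ (sending it to, say, $H_2(-1)$ plus a rank-one symmetric correction in a convenient basis) before identifying the relevant columns.
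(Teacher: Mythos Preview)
Your proposal is correct and follows essentially the same route as the paper's proof. Your worry at the end is unnecessary: writing $\Gamma_2=\left[\begin{smallmatrix}0&-1\\1&1\end{smallmatrix}\right]$ explicitly, its symmetric part is exactly $\left[\begin{smallmatrix}0&0\\0&1\end{smallmatrix}\right]=e_2e_2^\top$ and its skew-symmetric part is exactly $-H_2(-1)$, so taking $V=[e_2\ e_4\ \cdots\ e_{2k}]$ picks out precisely the even-indexed rows of $X_0$ (i.e.\ the columns $x_2,x_4,\dots,x_{2k}$ of $X_0^\top$), and Lemma~\ref{rankybound.lem} applies directly with no preliminary congruence normalization needed.
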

\begin{proof}
We first prove the necessity. Assume that there is some $X\in\CC^{(2k)\times(2m)}$ such that 
\begin{equation}\label{3gamma2to2h2(-1)}
X^\top \Gamma_2^{\oplus k} X = H_2(-1)^{\oplus m}.
\end{equation}
As $\Gamma_2=-H_2(-1)+\begin{smat}
 0 & 0 \\ 0 & 1 \end{smat}$ then 
\begin{equation}\label{xsum}
X^\top \Gamma_2^{\oplus k}X
=-X^\top H_2(-1)^{\oplus k}X+X^\top \begin{bmatrix}
 0&0\\0&1
\end{bmatrix}^{\oplus k}X,
\end{equation}
with the first addend in the right-hand side of \eqref{xsum} being skew-symmetric, and the second one being symmetric. As a consequence of Lemma \ref{sym-skew.lem} and \eqref{3gamma2to2h2(-1)}, we conclude that the second addend in the right-hand side of \eqref{xsum} is equal to zero. Note that this addend is equal to $X^\top_{\rm even}X_{\rm even}$, where $X^\top_{\rm even}=\begin{bmatrix}
x_2&x_4&\hdots&x_{2k} 
\end{bmatrix}$, with $x_i$ being the $i$th column of $X^\top$ (namely, the $i$th row of $X$). Then, Lemma \ref{yrank.lem} implies that $\rank (X^\top_{\rm even})\leq \frac{k}{2}$, and Lemma \ref{rankybound.lem} in turn implies that 
$$\rank (X^\top H_2(-1)^{\oplus k}X)\leq 2\cdot \rank X^\top_{\rm even} \leq 2\cdot \frac{k}{2}=k.$$
Since 
$$2m=\rank(H_2(-1)^{\oplus m})= \rank (X^\top \Gamma_2^{\oplus k}X)= \rank (X^\top H_2(-1)^{\oplus k}X)\leq k =\frac{4\rho(\Gamma_2^{\oplus k})}{3}, $$ 
the necessity  follows.

For the sufficiency, it is enough to prove, as a consequence of Lemma \ref{directsum.lemma}, that 
$$\Gamma_2^{\oplus k} {\rightsquigarrow} H_2(-1)^{\oplus m}  \quad \text{ for } m=  \left\lfloor\frac{2}{3}\rho(\Gamma_2^{\oplus k}) \right\rfloor.$$
First note that $\Gamma_2^{\oplus 2} \overset{X_0}{\rightsquigarrow} H_2(-1)$, with  $X_0=\begin{smat} 0&-1\\ 1&0\\ 0 & 0 \\ \mathfrak i &0 \end{smat}$, and so:
\begin{enumerate}[(i)]
    \item If $k=2t$ then $\lfloor\frac{2}{3}\rho(\Gamma_2^{\oplus 2t}) \rfloor= \lfloor\frac{2}{3}\cdot \frac{6t}{4} \rfloor=t$ and $\Gamma_2^{\oplus 2t} \overset{X_1}{\rightsquigarrow} H_2(-1)^{\oplus t}$, with $X_1=X_0^{\oplus t}$.
    \item If $k=2t+1$ then  $\lfloor\frac{2}{3}\rho(\Gamma_2^{\oplus 2t+1}) \rfloor=\lfloor\frac{2}{3}\cdot \frac{6t+3}{4} \rfloor=\lfloor t+\frac{1}{2} \rfloor=t$  and $\Gamma_2^{\oplus 2t+1} \rightsquigarrow \Gamma_2^{\oplus 2t} \rightsquigarrow  H_2(-1)^{\oplus t}$.
\end{enumerate}

\end{proof}

\subsection{Why the blocks $\Gamma_1$ and $\Gamma_2$ are problematic}\label{type2blocksbad.sec}

Theorem \ref{gamma2toh2(-1).th} shows that the necessary condition provided in Lemma \ref{necessary_th} for Eq. \eqref{maineq} to be consistent is not sufficient for an arbitrary $A$. In particular, for a matrix $A$ whose CFC contains Type-I blocks of size $2$, namely $\Gamma_2$. For instance, $\Gamma_2^{\oplus4}\rightsquigarrow\hspace{-3ex}/ \quad H_2(-1)^{\oplus3}$, but the necessary condition $3\leq\rho(\Gamma_2^{\oplus4})=3$ is satisfied. 

Something similar occurs with Type-I blocks of size $1$, namely $\Gamma_1=[1]$. For instance, $\Gamma_1^{\oplus n} \rightsquigarrow\hspace{-3ex}/ \quad H_2(-1)$ for any $n$, because $\Gamma_1^{\oplus n}$ is symmetric and $H_2(-1)$ is skew-symmetric. However, the necessary condition $1\leq\rho(\Gamma_1^{\oplus n})=\frac{n}{2}$ is satisfied for $n\geq2$.

The remaining of this section is devoted to prove, in Theorem \ref{sufficiency-type1.th}, that, if the CFC of $A$ does not contain either blocks $\Gamma_1$ or $\Gamma_2$, then the necessary condition of Lemma \ref{necessary_th} is also sufficient. As a consequence of the previous arguments, the case where the CFC of $A$ contains blocks of either type $\Gamma_1$ or $\Gamma_2$ requires a different analysis, that is not addressed in this work.

\subsection{The case of Type-I blocks of size at least 3}
The goal of this section is to prove Theorem \ref{sufficiency-type1.th}, which is the counterpart of Theorems \ref{sufficiency-type0_th} and \ref{sufficient-typeII.th} when the CFC of $A$ only contains Type-I blocks (except $\Gamma_1$ and $\Gamma_2$). In other words, that the necessary condition stated in Lemma \ref{necessary_th} is also sufficient in this case. For this, we need a series of technical results, namely Lemmas \ref{Basics_for_Gamma}--\ref{TypeIBlocks_size_geq3_th}, that we state and prove all in a row.

The following result provides some basic reductions involving Type-I blocks.

\begin{lemma} \label{basics_G3_G6} \label{Basics_for_Gamma}
\begin{enumerate}[{\rm(i)}]

\item \label{J2+G1toG2}
$J_2(0) \oplus \Gamma_1   \overset{X_0}{\rightsquigarrow} \Gamma_2$, with $X_0=\begin{smat} -1 & 1 \\ 1 & 1 \\ 1  & 0 \end{smat}$.

\item \label{G2+G1toH2}
$\Gamma_2 \oplus \Gamma_1   \overset{X_0}{\rightsquigarrow} H_2(-1)$, with $X_0=\begin{smat} 0 & 1 \\ 1 & 0 \\ \mathfrak{i}  & 0 \end{smat}$.

\item \label{J2+G1^2toH2(-1)}
 $J_{2}(0)\oplus {\Gamma_{1}}^{\oplus 2} \overset{X_0}{\rightsquigarrow} H_2(-1)$,  with
$X_0=\sqrt{2}\begin{smat}
 1 & 0 \\
 0 & 1 \\
 \frac{\mathfrak i}{2} & \frac{\mathfrak i}{2} \\
 \frac{1}{2} & -\frac{1}{2} \\
\end{smat}.$

\item \label{G2+J2toH2+G1} 
$\Gamma_2 \oplus J_2(0)   \overset{X_0}{\rightsquigarrow} H_2(-1)\oplus \Gamma_1$,  with $X_0=\begin{smat} -1 & 0 & -1 \\ 0 & 1 & 0 \\ 0 & 1 & 1 \\ 0 & -1 & 1 \end{smat}$.
    
\item  \label{G3+G2toH2+J2}        
    $\Gamma_3 \oplus \Gamma_2 \overset{X_0}{\rightsquigarrow} H_2(-1)\oplus J_2(0)$, with  $X_0=\begin{smat}
         0 & 0 & 0 & 1 \\
 1 & 0 & 0 & 1 \\
 1 & 0 & \frac{1}{2} & \frac{1}{2} \\
 0 & 1 & \frac{1}{2} & -\frac{1}{2} \\
 1 & 0 & 0 & 0 \\
         \end{smat}$.

\item \label{G6+G2toH2^2+J2}   
$\Gamma_6\oplus \Gamma_2 \overset{X_0}{\rightsquigarrow} H_2(-1)^{\oplus 2}\oplus J_2(0)$
         for $X_0=
\begin{smat}
 -1 & 0 & 0 & 0 & 0 & 0 \\
 0 & 0 & 0 & 0 & 0 & 0 \\
 0 & 0 & 0 & -\mathfrak{i} & -\frac{1}{2} & \frac{1}{2} \\
 0 & 0 & 0 & -\mathfrak{i} & 0 & 1 \\
 0 & 0 & 0 & 0 & 0 & 1 \\
 0 & 1 & 0 & 0 & 0 & 0 \\
 0 & 0 & -1 & 0 & -\frac{\mathfrak{i}}{2} & -\frac{\mathfrak{i}}{2} \\
 0 & 0 & 0 & 1 & 0 & 0 \\
\end{smat}$.

\item  \label{G2^4toH2(-1)2+J2}    $\Gamma_4^{\oplus 2} \overset{X_0}{\rightsquigarrow} H_2(-1)^{\oplus 2}\oplus J_2(0)$, with 
$X_0 =  
\begin{smat}
 -1 & 0 & 0 & 0 & 0 & 0 \\
 0 & 0 & 0 & 0 & 0 & 0 \\
 0 & 0 & 0 & \mathfrak{i} & -\frac{\mathfrak{i}}{2} & -\frac{\mathfrak{i}}{2} \\
 0 & 1 & 0 & 0 & 0 & 0 \\
 0 & 0 & -1 & 0 & -\frac{1}{4} & \frac{1}{4} \\
 0 & 0 & 0 & -\frac{1}{2} & \frac{1}{2} & \frac{1}{2} \\
 0 & 0 & 0 & 0 & -\frac{1}{2} & \frac{1}{2} \\
 0 & 0 & 0 & 1 & 0 & 0 \\
\end{smat}$.
\end{enumerate}
\end{lemma}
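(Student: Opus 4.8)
The approach is a direct verification for each of the seven items. In every case one is handed an explicit matrix $X_0$, and all that is required is to compute the product $X_0^\top A X_0$ and check that it equals the stated right-hand side $B$; equivalently, that $A\overset{X_0}{\rightsquigarrow}B$. No structural argument enters: Lemma~\ref{basics_G3_G6} is simply a stock of small, $\rho$-invariant ``building blocks'' involving $\Gamma_1,\Gamma_2,\Gamma_3,\Gamma_4,\Gamma_6$ and $J_2(0)$, to be concatenated later, via the Addition and Transitivity laws of Lemma~\ref{basic_laws_lemma}, in order to treat an arbitrary direct sum of Type-I blocks of size at least $3$. This is exactly the pattern already used for Lemma~\ref{Js_lem}, Lemma~\ref{H2k+4toH2k_lem} and Lemma~\ref{Gamman+1ton_lem}, so after stating the lemma I would simply assert that each claim is checked by a direct computation and indicate, for the reader, how the computation is organized.

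For items (i)--(iv) nothing further is needed: there $X_0$ has at most three columns and at most four rows, so $X_0^\top A X_0$ can be expanded by hand once the small-size forms of $\Gamma_1=[1]$, $\Gamma_2$, $\Gamma_3$, $J_2(0)$ and $H_2(-1)$ (from Theorem~\ref{cfc_th}) are written out. Items (v)--(vii) are heavier only in bookkeeping: $\Gamma_3\oplus\Gamma_2$ has size $5$ with $X_0$ of size $5\times 4$, whereas $\Gamma_6\oplus\Gamma_2$ and $\Gamma_4^{\oplus 2}$ have size $8$ with $X_0$ of size $8\times 6$. Each of these I would handle by first forming $AX_0$ --- a product with the sparse $\Gamma_k$, whose nonzero entries sit on and near the anti-diagonal --- and then left-multiplying by $X_0^\top$, computing the resulting matrix entry by entry and matching it against the target direct sum of copies of $H_2(-1)$ and a single $J_2(0)$.

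Before trusting any of these computations I would run two cheap checks that catch transcription errors in the displayed $X_0$'s. First, every relation in the lemma is $\rho$-invariant: using Remark~\ref{rho_values} and the additivity~\eqref{additive} of $\rho$, one has for instance $\rho(\Gamma_2\oplus J_2(0))=\tfrac34+\tfrac12=\tfrac54=1+\tfrac14=\rho(H_2(-1)\oplus\Gamma_1)$ for item~(iv), $\rho(\Gamma_3\oplus\Gamma_2)=\tfrac34+\tfrac34=\tfrac32=1+\tfrac12=\rho(H_2(-1)\oplus J_2(0))$ for item~(v), and $\rho(\Gamma_6\oplus\Gamma_2)=\tfrac74+\tfrac34=\tfrac52=\rho(\Gamma_4^{\oplus 2})=2+\tfrac12=\rho(H_2(-1)^{\oplus 2}\oplus J_2(0))$ for items~(vi) and~(vii); the remaining items are similar. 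Second, the number of columns of $X_0$ must equal the size of $B$ (for example $6$ in (vi) and (vii)). A failure of either check would immediately flag a typo.

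The only genuine obstacle is therefore the risk of an arithmetic slip in items~(vi) and~(vii), whose $X_0$'s carry half- and quarter-integer entries together with factors $\mathfrak{i}$. I would mitigate this by carrying out the products symbolically and cross-checking against the invariants just mentioned; one may also note, as a partial independent confirmation of (vi), that Lemma~\ref{Gamman+1ton_lem}\,(ii) with $k=1$ gives $\Gamma_6\oplus\Gamma_2\rightsquigarrow H_2(-1)\oplus\Gamma_2^{\oplus 2}$, so at least the appearance of one $H_2(-1)$ summand there is consistent with the general theory. Since, however, what the downstream argument needs is the \emph{explicit} solution $X_0$ --- to be substituted directly into the Addition law when the CFC of $A$ is a direct sum of Type-I blocks --- the direct verification is both the cleanest and the most useful form of the proof.
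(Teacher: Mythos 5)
Your proposal matches the paper exactly: the paper proves this lemma by simply observing that each claim is verified by checking directly that the given $X_0$ satisfies $X_0^\top A X_0=B$, which is precisely the direct-verification strategy you describe (your additional $\rho$-invariance and dimension sanity checks are consistent with the paper's values in Remark \ref{rho_values} but are not needed). No gap; the proposal is correct and takes essentially the same approach.
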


The proof of Lemma \ref{Basics_for_Gamma} is straightforward, just by checking that $X_0$ indeed satisfies the corresponding equations.

In Lemmas~\ref{Gammas_th} and \ref{basic_auxiliar_Gamma4k+2_th} we consider the case where $A$ is a direct sum of Type-I blocks. In particular, Lemma~\ref{Gammas_th} addresses the sum of blocks of the form $\Gamma_{4k+\epsilon}$, for $\epsilon\in\{0,1\}$, and Lemma \ref{basic_auxiliar_Gamma4k+2_th} the sum of blocks of the form $\Gamma_{4k+\epsilon}$, with $\epsilon\in\{2,3\}$.

\begin{lemma} \label{Gammas_th}
For each $r\geq 0$, $k_i\geq 1$, and  $\epsilon_i\in\{0,1\}$, we have:
\begin{enumerate}[{\rm(i)}]
    \item \label{4rG4k}
    $ A=\bigoplus_{i=1}^{4r} \Gamma_{4k_i+\epsilon_i} \rightsquigarrow  H_2(-1)^{\oplus \rho(A)}$ with $\rho(A)=k_1+\cdots+k_{4r}+r$.
    \item \label{4r+1G4k}
    $A=\bigoplus_{i=1}^{4r+1} \Gamma_{4k_i+\epsilon_i} \rightsquigarrow  H_2(-1)^{\oplus \rho(A)-\frac14}\oplus \Gamma_1$ with $\rho(A)=k_1+\cdots+k_{4r+1}+r+\frac14$.
    \item \label{4r+2G4k}
    $ A=\bigoplus_{i=1}^{4r+2} \Gamma_{4k_i+\epsilon_i} \rightsquigarrow  H_2(-1)^{\oplus \rho(A)-\frac12}\oplus J_2(0)$ with $\rho(A)=k_1+\cdots+k_{4r+2}+r+\frac12$.
    \item \label{4r+3G4k}
    $A=\bigoplus_{i=1}^{4r+3} \Gamma_{4k_i+\epsilon_i} \rightsquigarrow  H_2(-1)^{\oplus \rho(A)-\frac34}\oplus J_2(0)\oplus \Gamma_1$ with $\rho(A)=k_1+\cdots+k_{4r+3}+r+\frac34$.
 \end{enumerate}
\end{lemma}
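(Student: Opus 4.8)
The plan is to push every summand down to a copy of $\Gamma_4$, collect the resulting $\Gamma_4$'s in groups of four, and use the relation $\Gamma_4^{\oplus2}\rightsquigarrow H_2(-1)^{\oplus2}\oplus J_2(0)$ of Lemma~\ref{Basics_for_Gamma}~\eqref{G2^4toH2(-1)2+J2}, together with $J_2(0)^{\oplus2}\rightsquigarrow H_2(-1)$, as the engine of the reduction. Since every relation $\rightsquigarrow$ used below is $\rho$-invariant (none of them is the Elimination law), the number of $H_2(-1)$ summands produced is determined by $\rho(A)$ alone. By additivity~\eqref{additive} and the values of $\rho$ on $\Gamma$-blocks recorded in Remark~\ref{rho_values}, each block contributes $\rho(\Gamma_{4k_i+\epsilon_i})=k_i+\tfrac14$ regardless of whether $\epsilon_i$ is $0$ or $1$, so if $A$ has $N$ blocks then $\rho(A)=\sum_i k_i+\tfrac N4$; writing $N=4r+t$ with $t\in\{0,1,2,3\}$ this gives exactly the four stated values of $\rho(A)$.

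First I would normalize the sizes. By Lemma~\ref{Gamman+1ton_lem}~\eqref{G2k+1toG2k} (applied with $k=2k_i$) we have $\Gamma_{4k_i+1}\rightsquigarrow\Gamma_{4k_i}$, so the Addition law lets us replace each block $\Gamma_{4k_i+1}$ by $\Gamma_{4k_i}$ and assume $A=\bigoplus_{i=1}^N\Gamma_{4k_i}$. Then, for each $i$, repeated application of Lemma~\ref{Gamman+1ton_lem}~\eqref{G2k+4toG2k+H2(-1)}---which peels off one $H_2(-1)$ and lowers the $\Gamma$-size by $4$---gives $\Gamma_{4k_i}\rightsquigarrow H_2(-1)^{\oplus(k_i-1)}\oplus\Gamma_4$; this is vacuously true when $k_i=1$, and when $k_i\ge2$ the intermediate $\Gamma$-blocks have sizes $4k_i,4k_i-4,\dots,8$, all $\ge8$, so that part~\eqref{G2k+4toG2k+H2(-1)} applies at each step. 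Combining these with the Addition and Transitivity laws yields
$$
A\;\rightsquigarrow\;H_2(-1)^{\oplus\,\bigl(\sum_i k_i\bigr)-N}\oplus\Gamma_4^{\oplus N}.
$$

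It remains to resolve $\Gamma_4^{\oplus N}$. From Lemma~\ref{Basics_for_Gamma}~\eqref{G2^4toH2(-1)2+J2} and Lemma~\ref{Js_lem}~\eqref{J2^2toH2(-1)} one gets $\Gamma_4^{\oplus4}\rightsquigarrow H_2(-1)^{\oplus4}\oplus J_2(0)^{\oplus2}\rightsquigarrow H_2(-1)^{\oplus5}$; from Lemma~\ref{Gamman+1ton_lem}~\eqref{G4toH2(-1)} one gets $\Gamma_4\rightsquigarrow H_2(-1)\oplus\Gamma_1$, and combining this once more with Lemma~\ref{Basics_for_Gamma}~\eqref{G2^4toH2(-1)2+J2} also $\Gamma_4^{\oplus3}\rightsquigarrow H_2(-1)^{\oplus3}\oplus J_2(0)\oplus\Gamma_1$. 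Setting $R_0$ to be the empty matrix, $R_1=\Gamma_1$, $R_2=J_2(0)$, $R_3=J_2(0)\oplus\Gamma_1$, and decomposing $\Gamma_4^{\oplus N}=\Gamma_4^{\oplus4r}\oplus\Gamma_4^{\oplus t}$, these relations give $\Gamma_4^{\oplus N}\rightsquigarrow H_2(-1)^{\oplus(5r+t)}\oplus R_t$. Plugging this into the relation above and using $\bigl(\sum_i k_i-N\bigr)+(5r+t)=\sum_i k_i+r$, the Transitivity law yields
$$
A\;\rightsquigarrow\;H_2(-1)^{\oplus\,\sum_i k_i+r}\oplus R_t .
$$
Since $\sum_i k_i+r=\rho(A)-\tfrac t4$, this is precisely statement \eqref{4rG4k} when $t=0$, \eqref{4r+1G4k} when $t=1$, \eqref{4r+2G4k} when $t=2$, and \eqref{4r+3G4k} when $t=3$.

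I do not expect a genuine obstacle, only two points requiring care. The routine one is tracking the case $k_i=1$ (read as the trivial relation $\Gamma_4\rightsquigarrow\Gamma_4$) and checking, as above, that the descending chain of $\Gamma$-sizes stays $\ge8$ until it stops at $\Gamma_4$, so that part~\eqref{G2k+4toG2k+H2(-1)} is always legitimately applied. The conceptual point---and the reason the argument cannot be made more naive---is that the stray $\Gamma_1$ blocks created by $\Gamma_4\rightsquigarrow H_2(-1)\oplus\Gamma_1$ cannot be turned into $H_2(-1)$'s on their own, since $\Gamma_1$ is symmetric and $H_2(-1)$ is skew-symmetric; the whole purpose of grouping the $\Gamma_4$'s in fours, and of invoking Lemma~\ref{Basics_for_Gamma}~\eqref{G2^4toH2(-1)2+J2} to manufacture $J_2(0)$ blocks that then pair up via Lemma~\ref{Js_lem}~\eqref{J2^2toH2(-1)}, is to control how many $\Gamma_1$'s and $J_2(0)$'s survive, which is exactly what makes the residual term $R_t$ in the conclusion depend only on $N\bmod4$.
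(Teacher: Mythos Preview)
Your proof is correct and follows essentially the same approach as the paper's: both reduce each $\Gamma_{4k_i+\epsilon_i}$ to $H_2(-1)^{\oplus(k_i-1)}\oplus\Gamma_4$ via Lemma~\ref{Gamman+1ton_lem}~\eqref{G2k+1toG2k}--\eqref{G2k+4toG2k+H2(-1)}, and then resolve the residual $\Gamma_4$'s in groups of four using Lemma~\ref{Basics_for_Gamma}~\eqref{G2^4toH2(-1)2+J2} and Lemma~\ref{Js_lem}~\eqref{J2^2toH2(-1)}. The only difference is organizational: the paper handles the cases $N=1,2,3,4$ directly with general $k_i$ before passing to arbitrary $r$, whereas you first separate out all $\Gamma_4$'s into a single $\Gamma_4^{\oplus N}$ and treat that uniformly---a slightly cleaner bookkeeping, but the same argument.
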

\begin{proof} In all items, the calculation of $\rho(A)$  is a direct task  by using Remark~\ref{rho_values}. In order to prove the first claim of the statement for each item, we start by proving it for $r=0$ in cases (ii)--(iv), and for $r=1$ in case (i) (namely, when $A$ contains exactly one, two, three, and four blocks, respectively). Then, we address the general case of an arbitrary $r$.
\begin{itemize}
    \item   For a single block, namely item~(\ref{4r+1G4k}) when $r=0$, we have that      
    \begin{equation}\label{OneBlock}
        \Gamma_{4k+\epsilon} \rightsquigarrow  \Gamma_{4k} \rightsquigarrow   H_2(-1)^{\oplus k-1}\oplus \Gamma_{4}
        \rightsquigarrow  H_2(-1)^{\oplus k} \oplus \Gamma_{1},
    \end{equation}
    where the first step (that appears only when $\epsilon=1$)  is due to Lemma~\ref{Gamma2n+4->Gamma2n+H2(-1)}~(\ref{G2k+1toG2k}), the second one is a consequence of applying $k-1$ times  Lemma~\ref{Gamma2n+4->Gamma2n+H2(-1)}~(\ref{G2k+4toG2k+H2(-1)}),  and the third step is due to   Lemma~\ref{Gamma2n+4->Gamma2n+H2(-1)}~\eqref{G4toH2(-1)}.

    \item  For two blocks, namely item~(\ref{4r+2G4k}) when $r=0$, we perform the same transformations for two Type-I blocks as we did for only one block in the previous item, except for the last step, so that we have
      \begin{equation}\label{TwoBlocks}
      \Gamma_{4k_1+\epsilon_1} \oplus \Gamma_{4k_2+\epsilon_2}    \rightsquigarrow   (H_2(-1)^{\oplus k_1-1} \oplus \Gamma_{4}) \oplus (H_2(-1)^{\oplus k_2-1} \oplus \Gamma_{4}) \rightsquigarrow H_2(-1)^{\oplus k_1+k_2} \oplus J_2(0),\end{equation} 
     where this last step is  due to  Lemma~\ref{Basics_for_Gamma} (\ref{G2^4toH2(-1)2+J2}).

    \item The case of three blocks, namely item~(\ref{4r+3G4k}) when $r=0$, follows from \eqref{OneBlock} and \eqref{TwoBlocks}:
      \begin{equation}\label{ThreeBlocks}
      \bigoplus_{i=1}^3\Gamma_{4k_i+\epsilon_i}   \rightsquigarrow  \Big(H_2(-1)^{\oplus k_1+k_2} \oplus J_2(0)\Big)\oplus \Big(H_2(-1)^{\oplus k_3} \oplus \Gamma_{1}\Big) \rightsquigarrow H_2(-1)^{\oplus k_1+k_2+k_3} \oplus J_2(0) \oplus \Gamma_{1}.\end{equation} 

    \item As for the case of four blocks, namely item~(\ref{4rG4k}) when $r=1$, it follows from \eqref{TwoBlocks}:
      \begin{equation}\label{FourBlocks}
      \bigoplus_{i=1}^4\Gamma_{4k_i+\epsilon_i}    \rightsquigarrow  \Big(H_2(-1)^{\oplus k_1+k_2} \oplus J_2(0)\Big)\oplus \Big(H_2(-1)^{\oplus k_3+k_4} \oplus J_2(0)\Big) \rightsquigarrow H_2(-1)^{\oplus k_1+k_2+k_3+k_4+1},\end{equation} 
where the last step follows from $J_2(0)^{\oplus 2}\rightsquigarrow H_2(-1)$, as proved in Lemma~\ref{Js_lem}~(\ref{J2^2toH2(-1)}).
\end{itemize}

Now we are ready to prove the general result for any $r$:
\begin{enumerate}[(i)]
     \item The result follows from \eqref{FourBlocks} and the Addition law of consistency, by forming groups of four addends. More precisely,
    \begin{equation}\label{FourRBlocks}     
    \bigoplus_{i=1}^{4r} \Gamma_{4k_i+\epsilon_i}
     \rightsquigarrow  H_2(-1)^{\oplus k_1+\cdots+k_{4}+1}\oplus \cdots \oplus H_2(-1)^{\oplus k_{4r-3}+\cdots+k_{4r}+1} 
    =  H_2(-1)^{\oplus k_1+\cdots+k_{4r}+r}.
    \end{equation}
   \item The result follows from \eqref{FourRBlocks} and \eqref{OneBlock}, together with the Addition law.     
   \item The result follows from \eqref{FourRBlocks} and \eqref{TwoBlocks}, together with the Addition law. 
    \item Again, the result follows from \eqref{FourRBlocks} and \eqref{ThreeBlocks}, together with the Addition law. 
\end{enumerate}
\end{proof}

\begin{lemma} \label{basic_auxiliar_Gamma4k+2_th}
For each $r\geq 0$, $k_i\geq 0$, and $\epsilon_i\in\{0,1\}$ with $(k_i,\epsilon_i)\neq (0,0)$, we have:
\begin{enumerate}[{\rm(i)}]
    \item  \label{4rGammas4k+2} 
    $A=\bigoplus_{i=1}^{4r} \Gamma_{4k_i+2+\epsilon_i} \rightsquigarrow  H_2(-1)^{\oplus \rho(A)}$ with $\rho(A)=k_1+\cdots+k_{4r}+3r$.
    \item \label{4r+1Gamma4k+2}
    $  A=\bigoplus_{i=1}^{4r+1} \Gamma_{4k_i+2+\epsilon_i} \rightsquigarrow  H_2(-1)^{\oplus \rho(A)-\frac34}\oplus \Gamma_2$  with $\rho(A)=k_1+\cdots+k_{4r+1}+3r+\frac34$.
    \item \label{4r+2Gammas4k+2}
    $ A=\bigoplus_{i=1}^{4r+2} \Gamma_{4k_i+2+\epsilon_i} \rightsquigarrow  H_2(-1)^{\oplus \rho(A)-\frac12}\oplus J_2(0)$ with $\rho(A)=k_1+\cdots+k_{4r+2}+3r+\frac32$.
    \item \label{4r+3Gammas4k+2}
    $A=\bigoplus_{i=1}^{4r+3} \Gamma_{4k_i+2+\epsilon_i} \rightsquigarrow  H_2(-1)^{\oplus \rho(A)-\frac14}\oplus \Gamma_1$  with $\rho(A)=k_1+\cdots+k_{4r+3}+3r+\frac94$.
 \end{enumerate}
\end{lemma}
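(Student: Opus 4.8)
The plan is to mimic the structure of the proof of Lemma~\ref{Gammas_th}, reducing the general statement to a handful of base cases (one, two, three, and four blocks) and then bootstrapping to arbitrary $r$ by grouping the summands in blocks of four and invoking the Addition law together with the additivity of $\rho$ (Equation~\eqref{additive}). The key observation is that every block $\Gamma_{4k_i+2+\epsilon_i}$ with $\epsilon_i\in\{0,1\}$ can first be ``trimmed'' to the shape $\Gamma_{4k_i+2}$ using Lemma~\ref{Gamman+1ton_lem}~\eqref{G2k+1toG2k} (this step appears only when $\epsilon_i=1$), and then repeatedly peeled via Lemma~\ref{Gamma2n+4->Gamma2n+H2(-1)}~\eqref{G2k+4toG2k+H2(-1)} down to $H_2(-1)^{\oplus k_i}\oplus\Gamma_2$. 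So the reduction $\Gamma_{4k_i+2+\epsilon_i}\rightsquigarrow H_2(-1)^{\oplus k_i}\oplus\Gamma_2$ is the single block base case, which immediately accounts for item~(ii) with $r=0$ (the leftover being exactly one $\Gamma_2$, and $\rho(\Gamma_{4k_i+2+\epsilon_i})=k_i+\frac34$ by Remark~\ref{rho_values}).

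Next I would combine the leftover $\Gamma_2$'s. For two blocks we land on $H_2(-1)^{\oplus k_1+k_2}\oplus\Gamma_2^{\oplus2}$, and then I need $\Gamma_2^{\oplus2}\rightsquigarrow H_2(-1)$. This is precisely the relation established inside the proof of Theorem~\ref{gamma2toh2(-1).th} (with the explicit $X_0=\begin{smat} 0&-1\\ 1&0\\ 0&0\\ \mathfrak i&0 \end{smat}$), giving item~(iii) with $r=0$: $H_2(-1)^{\oplus k_1+k_2+1}\oplus$ nothing... wait — careful: two leftover $\Gamma_2$'s give one more $H_2(-1)$, so the final right-hand side for two blocks is $H_2(-1)^{\oplus k_1+k_2+1}$, which is an integer power; but the claimed leftover in item~(iii) is $J_2(0)$, not empty. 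I would reconcile this by instead using the reductions that produce a $J_2(0)$ leftover: concretely, for two blocks use $\Gamma_6\oplus\Gamma_2\rightsquigarrow H_2(-1)^{\oplus2}\oplus J_2(0)$ and $\Gamma_4^{\oplus2}\rightsquigarrow H_2(-1)^{\oplus2}\oplus J_2(0)$ from Lemma~\ref{Basics_for_Gamma}~\eqref{G6+G2toH2^2+J2}--\eqref{G2^4toH2(-1)2+J2} as the genuine two-block engine after first peeling each $\Gamma_{4k_i+2+\epsilon_i}$ down not to $\Gamma_2$ but to either $\Gamma_6$ (when $k_i\geq1$) or leaving a $\Gamma_2$ (when $k_i=0$), so that each pair contributes $H_2(-1)^{\oplus k_1+k_2}\oplus J_2(0)$. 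That matches item~(iii) for $r=0$: $H_2(-1)^{\oplus k_1+k_2}\oplus J_2(0)$ with $\rho=k_1+k_2+\frac32$, so $\rho-\frac12=k_1+k_2+1$... another off-by-one to watch — the $J_2(0)$ carries $\rho=\frac12$, and $H_2(-1)^{\oplus k_1+k_2}$ carries $k_1+k_2$, totalling $k_1+k_2+\frac12\neq k_1+k_2+\frac32$. So the correct two-block target must be $H_2(-1)^{\oplus k_1+k_2+1}\oplus J_2(0)$; the constants in Lemma~\ref{Basics_for_Gamma}~\eqref{G6+G2toH2^2+J2}--\eqref{G2^4toH2(-1)2+J2} have two $H_2(-1)$'s plus a $J_2(0)$ for total blocks of size $6+2=8$ and $\rho=k+\frac74+\frac34$... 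I would carefully recompute with Remark~\ref{rho_values} to pin down exactly which auxiliary reduction to invoke so the $\rho$-bookkeeping closes.

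Once the one-, two-, three-, and four-block cases are nailed (the three-block case being ``two blocks'' $\oplus$ ``one block'' and then absorbing the $\Gamma_2$-plus-$J_2(0)$-plus-$\Gamma_1$ debris via Lemma~\ref{Basics_for_Gamma}~\eqref{G2+G1toH2} or~\eqref{J2+G1toG2} as needed; the four-block case being two pairs, each yielding $\ldots\oplus J_2(0)$, and then $J_2(0)^{\oplus2}\rightsquigarrow H_2(-1)$ from Lemma~\ref{Js_lem}~\eqref{J2^2toH2(-1)} to get an integer $\rho$), the general $r$ case is routine: write $4r+s$ blocks as $r$ groups of four plus a residual group of $s\in\{0,1,2,3\}$ blocks, apply the Addition law, and collect. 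All relations used are $\rho$-invariant (they are never the Elimination law), so the exponent bookkeeping is forced to match $\rho(A)$ minus the $\rho$-contribution of the explicit leftover.

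The main obstacle I anticipate is exactly the constant-chasing flagged above: making sure the leftovers $\Gamma_2$, $J_2(0)$, $\Gamma_1$ in items~(i)--(iv) are the \emph{right} leftovers, i.e.\ that the number of $H_2(-1)$ summands produced equals $\rho(A)$ minus $\rho(\text{leftover})$, where $\rho(\Gamma_2)=\frac34$, $\rho(J_2(0))=\frac12$, $\rho(\Gamma_1)=\frac14$ (Remark~\ref{rho_values}). Because $4r+s$ blocks each of the form $\Gamma_{4k_i+2+\epsilon_i}$ have $\rho$ ending in a $\frac34$ per block, the fractional part of $\rho(A)$ is $\frac{3s}{4}\bmod 1$, which is $0,\frac34,\frac12,\frac14$ for $s=0,1,2,3$ — matching the leftovers' $\rho$-values in items (i),(ii),(iii),(iv) respectively. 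This confirms the statement is internally consistent, but it means the proof must thread each residual case through the correct chain of auxiliary lemmas (choosing between, e.g., peeling to $\Gamma_4$ versus $\Gamma_6$ versus $\Gamma_2$) so that the $J_2(0)$'s and $\Gamma_1$'s appear in exactly the right multiplicities to be either paired off or left as the single declared leftover. Once that combinatorial matching is set up correctly for $s\in\{0,1,2,3\}$, the induction on $r$ adds nothing new.
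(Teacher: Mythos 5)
Your overall architecture is the paper's: reduce each block via $\Gamma_{4k+3}\rightsquigarrow\Gamma_{4k+2}\rightsquigarrow H_2(-1)^{\oplus k}\oplus\Gamma_2$, settle the cases of one, two, three and four blocks, then group the summands in fours and finish with the Addition law. But the heart of this lemma is precisely the two- and three-block base cases, and there your proposal has genuine gaps rather than just unchecked constants. For two blocks, your first route through $\Gamma_2^{\oplus2}\rightsquigarrow H_2(-1)$ is not $\rho$-invariant (it drops $\rho$ from $\frac32$ to $1$) and destroys the $J_2(0)$ leftover that is indispensable later: the four-block step gains its extra $H_2(-1)$ exactly from $J_2(0)^{\oplus2}\rightsquigarrow H_2(-1)$, so without the $J_2(0)$ you cannot reach $\rho(A)$ in item (i). Your corrected route through $\Gamma_6\oplus\Gamma_2\rightsquigarrow H_2(-1)^{\oplus2}\oplus J_2(0)$ (Lemma \ref{basics_G3_G6}\,\eqref{G6+G2toH2^2+J2}) is the right engine, but it only applies when some $k_i\geq1$; you never resolve the remaining case $k_1=k_2=0$, i.e.\ two $\Gamma_3$ blocks, where your peeling scheme leaves $\Gamma_2\oplus\Gamma_2$ and no $\rho$-invariant move is available. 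The paper handles it by trimming only one of the two $\Gamma_3$'s and invoking $\Gamma_3\oplus\Gamma_2\rightsquigarrow H_2(-1)\oplus J_2(0)$ (Lemma \ref{basics_G3_G6}\,\eqref{G3+G2toH2+J2}), a relation your sketch never identifies. (Also, $\Gamma_4^{\oplus2}\rightsquigarrow H_2(-1)^{\oplus2}\oplus J_2(0)$ is irrelevant here: peeling blocks of size $\equiv2,3\pmod 4$ never produces $\Gamma_4$.)

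The three-block case is likewise misthreaded: combining the two-block output $H_2(-1)^{\oplus k_1+k_2+1}\oplus J_2(0)$ with the single-block output $H_2(-1)^{\oplus k_3}\oplus\Gamma_2$ leaves debris $\Gamma_2\oplus J_2(0)$ (no $\Gamma_1$ is present at this stage), and the needed $\rho$-invariant absorption is $\Gamma_2\oplus J_2(0)\rightsquigarrow H_2(-1)\oplus\Gamma_1$ (Lemma \ref{basics_G3_G6}\,\eqref{G2+J2toH2+G1}); the lemmas you cite, \eqref{G2+G1toH2} and \eqref{J2+G1toG2}, presuppose a $\Gamma_1$ summand and do not apply. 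Your $\rho$-parity check of the statement and the grouping-by-four induction are fine, but as written the proof does not close: the specific chain of auxiliary reductions, which is the actual content of the lemma, is left undetermined and in two places points to moves that would fail.
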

\begin{proof} We follow the same steps as in the proof of Lemma \ref{Gammas_th}. In all items, the calculation of the value of $\rho(A)$  is a direct task by using Remark~\ref{rho_values} and the Addition law. For instance, in part (ii), we have
$$
\rho(A)=\frac{\sum_{i=1}^{4r+1}(4k_i+2+\epsilon_i)+\sum_{i=1}^{4r+1}(1-\epsilon_i)}{4}=\sum_{i=1}^{4r+1}k_i+\frac{2(4r+1)+4r+1}{4}=\sum_{i=1}^{4r+1}k_i+3r+\frac{3}{4}.
$$
In order to prove the first part of the statement for each item, we first prove the cases $r=0$ in parts (ii)--(iv) and $r=1$ in part (i), and then we address the general case of an arbitrary $r$ in all parts.
\begin{itemize}
        \item   For a single block, namely item~(\ref{4r+1Gamma4k+2}) when $r=0$, we have that      
    \begin{equation}\label{OneBlockG4k+2}
        \Gamma_{4k+2+\epsilon} \rightsquigarrow  \Gamma_{4k+2} \rightsquigarrow   H_2(-1)^{\oplus k}\oplus \Gamma_{2},
    \end{equation}
    where the first step (arising only if $\epsilon=1$) is due to Lemma~\ref{Gamma2n+4->Gamma2n+H2(-1)}~(\ref{G2k+1toG2k}), and the second one is a consequence of applying $k$ times  Lemma~\ref{Gamma2n+4->Gamma2n+H2(-1)}~(\ref{G2k+4toG2k+H2(-1)}).
    
        \item  For two blocks, namely item~(\ref{4r+2Gammas4k+2}) when $r=0$, we are going to prove that 
    \begin{equation}\label{TwoBlocksG4k+2}
        \Gamma_{4k_1+2+\epsilon_1} \oplus \Gamma_{4k_2+2+\epsilon_2} \rightsquigarrow     H_2(-1)^{\oplus k_1+k_2+1}\oplus J_{2}(0).
    \end{equation}
        For this we consider the following three possible cases:
        \begin{enumerate}
    \item If $k_1=k_2=0$ then, by hypothesis, $\epsilon_1=\epsilon_2=1$. We have that
             $$
             \Gamma_3\oplus \Gamma_3\rightsquigarrow H_2(-1)\oplus J_2(0),
             $$
    by Lemma~\ref{Gamma2n+4->Gamma2n+H2(-1)}~(\ref{G2k+1toG2k}) and  Lemma~\ref{basics_G3_G6}~(\ref{G3+G2toH2+J2}).
    \item If $k_1\geq 1$ then, as $\Gamma_{4k+3} \rightsquigarrow \Gamma_{4k+2}$ by Lemma~\ref{Gamma2n+4->Gamma2n+H2(-1)}~(\ref{G2k+1toG2k}), we may assume that  $\epsilon_1=\epsilon_2=0$. Then:
              \begin{equation*}
              \Gamma_{4k_1+2}\oplus \Gamma_{4k_2+2} \rightsquigarrow \big(H_2(-1)^{\oplus k_1-1}\oplus \Gamma_{6}\big) \oplus \big(H_2(-1)^{\oplus k_2}\oplus \Gamma_{2}\big)  \rightsquigarrow H_2(-1)^{\oplus k_1+k_2+1}\oplus J_2(0),\end{equation*}
    where the first step is due to Lemma~\ref{Gamma2n+4->Gamma2n+H2(-1)}~(\ref{G2k+4toG2k+H2(-1)}) applied repeatedly, and the second one is due to Lemma~\ref{basics_G3_G6}~(\ref{G6+G2toH2^2+J2}).

    \item A similar argument is valid if $k_2\geq 1$. 
\end{enumerate}

          \item The case of three blocks, namely item~(\ref{4r+3Gammas4k+2}) when $r=0$,  follows from \eqref{TwoBlocksG4k+2} and \eqref{OneBlockG4k+2}:
      \begin{equation}\label{ThreeBlocksG4k+2}
      \Big(\Gamma_{4k_1+2+\epsilon_1}\oplus \Gamma_{4k_2+2+\epsilon_2}\Big)\oplus \Gamma_{4k_3+2+\epsilon_3}\rightsquigarrow H_2(-1)^{\oplus k_1+k_2+k_3+1} \oplus \Gamma_2 \oplus  J_2(0) \rightsquigarrow H_2(-1)^{\oplus k_1+k_2+k_3+2} \oplus \Gamma_1,\end{equation} 
    where the last step is due to Lemma~\ref{basics_G3_G6}~(\ref{G2+J2toH2+G1}).

    \item As for the case of four blocks, namely item~(\ref{4rG4k}) when $r=1$, it follows from \eqref{TwoBlocksG4k+2}:
      \begin{equation}\label{FourBlocksG4k+2}
      \bigoplus_{i=1}^4\Gamma_{4k_i+2+\epsilon_i}    \rightsquigarrow  \Big(H_2(-1)^{\oplus k_1+k_2+1} \oplus J_2(0)\Big)\oplus \Big(H_2(-1)^{\oplus k_3+k_4+1} \oplus J_2(0)\Big) \rightsquigarrow H_2(-1)^{\oplus k_1+k_2+k_3+k_4+3},\end{equation} 
where the last step is due to  Lemma~\ref{Js_lem}~(\ref{J2^2toH2(-1)}).  
\end{itemize}

Now we are ready to prove the general case, for an arbitrary $r$:
\begin{enumerate}[(i)]
     \item The result follows from \eqref{FourBlocksG4k+2} and the Addition law of consistency. Namely,
    \begin{equation}\label{FourRBlocksG4k+2}     
    \bigoplus_{i=1}^{4r} \Gamma_{4k_i+2+\epsilon_i}
     \rightsquigarrow  H_2(-1)^{\oplus k_1+\cdots+k_{4}+3}\oplus \cdots \oplus H_2(-1)^{\oplus k_{4r-3}+\cdots+k_{4r}+3} 
    =  H_2(-1)^{\oplus k_1+\cdots+k_{4r}+3r}.
    \end{equation}
   \item The result follows from \eqref{FourRBlocksG4k+2} and \eqref{OneBlockG4k+2}, together with the Addition law.     
   \item The result follows from \eqref{FourRBlocksG4k+2} and \eqref{TwoBlocksG4k+2}, together with the Addition law. 
    \item Again, the result follows from \eqref{FourRBlocksG4k+2} and \eqref{ThreeBlocksG4k+2}, together with the Addition law. 
\end{enumerate}
\end{proof}

From Lemmas \ref{Gammas_th} and \ref{basic_auxiliar_Gamma4k+2_th} we can obtain the following.

\begin{theorem}\label{TypeIBlocks_size_geq3_th}
For a complex square matrix, $A$, whose {\rm CFC} has only Type-I blocks of size at least $3$, one of the following is satisfied:   
\begin{enumerate}[{\rm(i)}]
\item $\rho(A)$ is an integer and  $ A\rightsquigarrow H_2(-1)^{\oplus \rho(A)}$.
\item  $\rho(A)-\frac{1}{4}$ is an integer and   
$A \rightsquigarrow H_2(-1)^{\oplus  \rho(A)-\frac14} \oplus \Gamma_1$. 
\item  $\rho(A)-\frac{1}{2}$ is an integer and   
$A \rightsquigarrow H_2(-1)^{\oplus  \rho(A)-\frac12} \oplus C$ where $C$ is either $J_2(0)$ or $\Gamma_1^{\oplus 2}$. 
\item  $\rho(A)-\frac{3}{4}$ is an integer and   
$A \rightsquigarrow H_2(-1)^{\oplus  \rho(A)-\frac34} \oplus \Gamma_2$. 
\end{enumerate}
\end{theorem}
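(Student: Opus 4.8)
The plan is to deduce the statement from Lemmas~\ref{Gammas_th} and~\ref{basic_auxiliar_Gamma4k+2_th} by splitting the Type-I blocks of CFC($A$) according to the residue of their size modulo~$4$, and then cleaning up the resulting leftover by a short case analysis. First, observe that a Type-I block of size at least $3$ has size either $\equiv0$ or $\equiv1\pmod4$ -- these are exactly the blocks $\Gamma_{4k+\epsilon}$ with $k\geq1$ and $\epsilon\in\{0,1\}$ treated in Lemma~\ref{Gammas_th} -- or size $\equiv2$ or $\equiv3\pmod4$ -- these are exactly the blocks $\Gamma_{4k+2+\epsilon}$ with $k\geq0$, $\epsilon\in\{0,1\}$ and $(k,\epsilon)\neq(0,0)$ treated in Lemma~\ref{basic_auxiliar_Gamma4k+2_th}. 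Hence, by the Canonical reduction and Permutation laws, we may assume CFC($A$)$=A'\oplus A''$, where $A'$ gathers the blocks of the first kind, say $p$ of them, and $A''$ gathers those of the second kind, say $q$ of them; by the additive property \eqref{additive}, $\rho(A)=\rho(A')+\rho(A'')$.

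Next, apply the two lemmas block-collection-wise. Depending on $p\bmod4$, Lemma~\ref{Gammas_th} yields $A'\rightsquigarrow H_2(-1)^{\oplus\rho(A')-\rho(C')}\oplus C'$ with $C'$ equal to $\emptyset$, $\Gamma_1$, $J_2(0)$, or $J_2(0)\oplus\Gamma_1$ (for $p\equiv0,1,2,3$, respectively); depending on $q\bmod4$, Lemma~\ref{basic_auxiliar_Gamma4k+2_th} yields $A''\rightsquigarrow H_2(-1)^{\oplus\rho(A'')-\rho(C'')}\oplus C''$ with $C''$ equal to $\emptyset$, $\Gamma_2$, $J_2(0)$, or $\Gamma_1$ (for $q\equiv0,1,2,3$, respectively). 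In both lemmas the deficit in the exponent of $H_2(-1)$ equals $\rho$ of the leftover, so these relations are $\rho$-invariant. By the Addition and Transitivity laws,
$$
A\ \rightsquigarrow\ H_2(-1)^{\oplus\,\rho(A)-\rho(C'\oplus C'')}\oplus C'\oplus C''.
$$

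It then remains to absorb $C'\oplus C''$ into one of the canonical leftovers $\emptyset$, $\Gamma_1$, $J_2(0)$, $\Gamma_1^{\oplus2}$, $\Gamma_2$ by means of $\rho$-invariant consistency relations; adding the $H_2(-1)$ summands back and invoking Transitivity then produces exactly one of the four alternatives in the statement, the value of $\rho(A)$ modulo $1$ dictating which one. This is a finite verification over the $16$ pairs $(p\bmod4,q\bmod4)$, using only the reductions $\Gamma_2\oplus\Gamma_1\rightsquigarrow H_2(-1)$, $J_2(0)\oplus\Gamma_1^{\oplus2}\rightsquigarrow H_2(-1)$ and $\Gamma_2\oplus J_2(0)\rightsquigarrow H_2(-1)\oplus\Gamma_1$ of Lemma~\ref{Basics_for_Gamma}~(ii),(iii),(iv), together with $J_2(0)\oplus\Gamma_1\rightsquigarrow\Gamma_2$ of Lemma~\ref{Basics_for_Gamma}~(i) and $J_2(0)^{\oplus2}\rightsquigarrow H_2(-1)$ of Lemma~\ref{Js_lem}~(iii). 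For instance, $(p,q)\equiv(1,1)$ gives the leftover $\Gamma_1\oplus\Gamma_2\rightsquigarrow H_2(-1)$, landing in case~(i); $(p,q)\equiv(1,2)$ gives $\Gamma_1\oplus J_2(0)\rightsquigarrow\Gamma_2$, case~(iv); $(p,q)\equiv(1,3)$ gives $\Gamma_1^{\oplus2}$ directly, case~(iii); $(p,q)\equiv(3,2)$ gives $J_2(0)^{\oplus2}\oplus\Gamma_1\rightsquigarrow H_2(-1)\oplus\Gamma_1$, case~(ii); and the remaining pairs are entirely analogous. (Note that cases $(0,2)$ and $(1,3)$ show that both $J_2(0)$ and $\Gamma_1^{\oplus2}$ genuinely occur as leftovers in alternative~(iii), which is why the statement allows either.)

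There is no genuinely hard step here: all the substance has already been absorbed into Lemmas~\ref{Gammas_th} and~\ref{basic_auxiliar_Gamma4k+2_th}. What requires care is only the $4\times4$ bookkeeping of the last paragraph -- matching each leftover $C'\oplus C''$ to the correct alternative, and verifying that every reduction employed preserves $\rho$, so that the exponent of $H_2(-1)$ in the conclusion is exactly $\rho(A)$ minus $\rho$ of the displayed leftover. This is most transparently presented as a table indexed by $(p\bmod4,q\bmod4)$.
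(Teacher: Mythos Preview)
Your proposal is correct and follows essentially the same approach as the paper: split $A$ into $A'\oplus A''$ according to the residue of the block sizes modulo~$4$, apply Lemmas~\ref{Gammas_th} and~\ref{basic_auxiliar_Gamma4k+2_th}, and then run through the $16$ pairs $(p\bmod4,q\bmod4)$ using exactly the same five elementary reductions from Lemmas~\ref{Js_lem} and~\ref{Basics_for_Gamma}. The only difference is organizational---you suggest tabulating by the input pair, whereas the paper groups the cases by the output alternative (i)--(iv)---but the content is identical.
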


\begin{proof}
By hypothesis, 
$$A=A'\oplus A''=\bigoplus_{i=1}^{4j+k} \Gamma_{4h_i+\epsilon_i} \oplus \bigoplus_{r=1}^{4s+t} \Gamma_{4u_{r}+2+\eta_{r}}\,, $$
for some $j,s\geq 0$, $k,t\in\{0,1,2,3\}$, $h_i\geq 1$,  $\epsilon_i\in\{0,1\}$; and  $u_{r}\geq 0$,  $\eta_{r}\in\{0,1\}$ with $(u_{r},\eta_{r})\neq (0,0)$.  

From the additive property of $\rho$, namely Eq.  \eqref{additive}, $\rho(A)=\rho(A')\oplus \rho(A'')$,
and  looking at the values of $\rho(A')$ in Lemma \ref{Gammas_th} and $\rho(A'')$ in Lemma \ref{basic_auxiliar_Gamma4k+2_th}, the following possibilities arise for $\rho(A)$. Moreover, the relations $\rightsquigarrow$ that we obtain are also consequences of Lemmas \ref{Gammas_th} and \ref{basic_auxiliar_Gamma4k+2_th}, unless otherwise stated:
\begin{enumerate}[(i)]
    \item $\rho(A)$ is an integer when either: 
    \begin{itemize}
        \item $k=t=0$, then both $\rho(A')$ and $\rho(A'')$ are integers and
        $$
        A'\oplus A'' \rightsquigarrow H_2(-1)^{\oplus\rho(A')} \oplus  H_2(-1)^{\oplus\rho(A'')} = H_2(-1)^{\oplus\rho(A)}.
        $$
        \item $k=t=1$, then both $\rho(A')-\frac14$ and $\rho(A'')-\frac34$ are integers and
        $$
        A'\oplus A'' \rightsquigarrow H_2(-1)^{\oplus\rho(A')-\frac14}\oplus \Gamma_1 \oplus  H_2(-1)^{\oplus\rho(A'')-\frac34}\oplus \Gamma_2 = H_2(-1)^{\oplus\rho(A)},
        $$
        where the last step is a consequence of Lemma \ref{basics_G3_G6} \eqref{G2+G1toH2}.
        \item $k=t=2$, then both $\rho(A')-\frac12$ and $\rho(A'')-\frac12$ are integers and
        $$
        A'\oplus A'' \rightsquigarrow H_2(-1)^{\oplus\rho(A')-\frac12}\oplus J_2(0) \oplus  H_2(-1)^{\oplus\rho(A'')-\frac12}\oplus J_2(0) \rightsquigarrow  H_2(-1)^{\oplus\rho(A)},
        $$
        where the last step is a consequence of Lemma \ref{Js_lem}~\eqref{J2^2toH2(-1)}.
        \item $k=t=3$, then both $\rho(A')-\frac34$ and $\rho(A'')-\frac14$ are integers and
        $$
        A'\oplus A'' \rightsquigarrow H_2(-1)^{\oplus\rho(A')-\frac34}\oplus J_2(0) \oplus \Gamma_1 \oplus  H_2(-1)^{\oplus\rho(A'')-\frac14}\oplus \Gamma_1 \rightsquigarrow  H_2(-1)^{\oplus\rho(A)},
        $$
        where the last step is a consequence of Lemma \ref{Basics_for_Gamma} \eqref{J2+G1^2toH2(-1)}.
    \end{itemize}    
    \item $\rho(A)-\frac14$ is an integer when either:
\begin{itemize}
    \item $k=0$ and $t=3$, then both $\rho(A')$ and $\rho(A'')-\frac14$ are integers and
        $$
        A'\oplus A'' \rightsquigarrow H_2(-1)^{\oplus\rho(A')} \oplus  H_2(-1)^{\oplus\rho(A'')-\frac14} \oplus \Gamma_1= H_2(-1)^{\oplus\rho(A)-\frac14}\oplus \Gamma_1.
        $$
    \item $k=1$ and $t=0$, then both $\rho(A')-\frac14$ and $\rho(A'')$ are integers and
        $$
        A'\oplus A'' \rightsquigarrow H_2(-1)^{\oplus\rho(A')-\frac14}  \oplus \Gamma_1 \oplus  H_2(-1)^{\oplus\rho(A'')}= H_2(-1)^{\oplus\rho(A)-\frac14}\oplus \Gamma_1.
        $$
    \item $k=2$ and $t=1$, then both $\rho(A')-\frac12$ and $\rho(A'')-\frac34$ are integers and
        $$
        A'\oplus A'' \rightsquigarrow H_2(-1)^{\oplus\rho(A')-\frac12}\oplus J_2(0) \oplus  H_2(-1)^{\oplus\rho(A'')-\frac34}\oplus \Gamma_2 \rightsquigarrow  H_2(-1)^{\oplus\rho(A)-\frac14}\oplus \Gamma_1,
        $$
        where the last step is a consequence of Lemma \ref{Basics_for_Gamma} \eqref{G2+J2toH2+G1}.
        
    \item $k=3$ and $t=2$, then both $\rho(A')-\frac34$ and $\rho(A'')-\frac12$ are integers and
        $$
        A'\oplus A'' \rightsquigarrow H_2(-1)^{\oplus\rho(A')-\frac34}\oplus J_2(0) \oplus \Gamma_1 \oplus  H_2(-1)^{\oplus\rho(A'')-\frac12}\oplus J_2(0) \rightsquigarrow  H_2(-1)^{\oplus\rho(A)-\frac14}\oplus \Gamma_1,
        $$
        where the last step is a consequence of Lemma \ref{Js_lem}~\eqref{J2^2toH2(-1)}. 
        
\end{itemize}

    \item $\rho(A)-\frac12$ is an integer when either:
\begin{itemize}
    \item $k=0$ and $t=2$, then both $\rho(A')$ and $\rho(A'')-\frac12$ are integers and
        $$
        A'\oplus A'' \rightsquigarrow H_2(-1)^{\rho(A')} \oplus  H_2(-1)^{\oplus\rho(A'')-\frac12} \oplus J_2(0)= H_2(-1)^{\oplus\rho(A)-\frac12}\oplus J_2(0).
        $$
    \item $k=1$ and $t=3$, then both $\rho(A')-\frac14$ and $\rho(A'')-\frac14$ are integers and
        $$
        A'\oplus A'' \rightsquigarrow H_2(-1)^{\rho(A')-\frac14}\oplus \Gamma_1 \oplus  H_2(-1)^{\oplus\rho(A'')-\frac14}\oplus \Gamma_1 \rightsquigarrow  H_2(-1)^{\oplus\rho(A)-\frac12}\oplus \Gamma_1^{\oplus 2}.
        $$
    \item $k=2$ and $t=0$, then both $\rho(A')-\frac12$ and $\rho(A'')$ are integers and
        $$
        A'\oplus A'' \rightsquigarrow H_2(-1)^{\oplus\rho(A')-\frac12}  \oplus J_2(0) \oplus  H_2(-1)^{\oplus\rho(A'')}= H_2(-1)^{\oplus\rho(A)-\frac12}\oplus J_2(0).
        $$
        
    \item $k=3$ and $t=1$, then both $\rho(A')-\frac34$ and $\rho(A'')-\frac34$ are integers and
        $$
        A'\oplus A'' \rightsquigarrow H_2(-1)^{\oplus\rho(A')-\frac34}\oplus J_2(0) \oplus \Gamma_1 \oplus  H_2(-1)^{\oplus\rho(A'')-\frac34}\oplus \Gamma_2 \rightsquigarrow  H_2(-1)^{\oplus\rho(A)-\frac12}\oplus J_2(0),
        $$
        where the last step is a consequence of Lemma \ref{basics_G3_G6} \eqref{G2+G1toH2}.
\end{itemize}

    \item $\rho(A)-\frac34$ is an integer when either: 
\begin{itemize}
    \item $k=0$ and $t=1$, then both $\rho(A')$ and $\rho(A'')-\frac34$ are integers and
        $$
        A'\oplus A'' \rightsquigarrow H_2(-1)^{\oplus\rho(A')} \oplus  H_2(-1)^{\oplus\rho(A'')-\frac34} \oplus \Gamma_2= H_2(-1)^{\oplus\rho(A)-\frac34}\oplus \Gamma_2.
        $$
    \item $k=1$ and $t=2$, then both $\rho(A')-\frac14$ and $\rho(A'')-\frac12$ are integers and
        $$
        A'\oplus A'' \rightsquigarrow H_2(-1)^{\oplus\rho(A')-\frac14}\oplus \Gamma_1 \oplus  H_2(-1)^{\oplus\rho(A'')-\frac12}\oplus J_2(0) \rightsquigarrow  H_2(-1)^{\oplus\rho(A)-\frac34}\oplus \Gamma_2,
        $$
        where the last step is a consequence of Lemma \ref{basics_G3_G6} \eqref{J2+G1toG2}.
    \item $k=2$ and $t=3$, then both $\rho(A')-\frac12$ and $\rho(A'')-\frac14$ are integers and
        $$
        A'\oplus A'' \rightsquigarrow H_2(-1)^{\oplus\rho(A')-\frac12}  \oplus J_2(0) \oplus  H_2(-1)^{\oplus\rho(A'')-\frac14}\oplus \Gamma_1 \rightsquigarrow H_2(-1)^{\oplus\rho(A)-\frac34}\oplus  \Gamma_2,
        $$
        where the last step is a consequence of Lemma \ref{basics_G3_G6} \eqref{J2+G1toG2}.
    \item $k=3$ and $t=0$, then both $\rho(A')-\frac34$ and $\rho(A'')$ are integers and
        $$
        A'\oplus A'' \rightsquigarrow H_2(-1)^{\oplus\rho(A')-\frac34}\oplus J_2(0) \oplus \Gamma_1 \oplus  H_2(-1)^{\oplus\rho(A'')} \rightsquigarrow  H_2(-1)^{\oplus\rho(A)-\frac34}\oplus \Gamma_2,
        $$
        where the last step is a consequence of Lemma \ref{basics_G3_G6} \eqref{J2+G1toG2}.
\end{itemize}
\end{enumerate}
\end{proof}

Now, we are in the position to prove the main result in this section.

\begin{theorem}\label{sufficiency-type1.th}
Let $A$ be a complex square matrix whose {\rm CFC} is a direct sum of Type-I blocks of size at least $3$. Then $X^\top A X = H_2(-1)^{\oplus m}$ is consistent if and only if $m\leq \rho(A)$. 
\end{theorem}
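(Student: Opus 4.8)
The plan is to deduce this statement immediately from Theorem~\ref{TypeIBlocks_size_geq3_th}, mirroring the way Theorems~\ref{sufficiency-type0_th} and~\ref{sufficient-typeII.th} were obtained from Theorems~\ref{Js_th} and~\ref{Hs.th}, respectively. The necessity of the condition $m\le\rho(A)$ is already contained in Lemma~\ref{necessary_th}, so only sufficiency needs an argument.

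For sufficiency, first I would note that $m$ is a nonnegative integer, so $m\le\rho(A)$ is equivalent to $m\le\lfloor\rho(A)\rfloor$. By the Elimination law (Lemma~\ref{basic_laws_lemma}~\eqref{elim_law}) together with the Transitivity law (Lemma~\ref{basic_laws_lemma}~\eqref{transitive_law}), it therefore suffices to establish the single relation $A\rightsquigarrow H_2(-1)^{\oplus\lfloor\rho(A)\rfloor}$. Next I would split into the four cases of Theorem~\ref{TypeIBlocks_size_geq3_th}, according to which of $\rho(A)$, $\rho(A)-\tfrac14$, $\rho(A)-\tfrac12$, $\rho(A)-\tfrac34$ is an integer; in each case this integer is precisely $\lfloor\rho(A)\rfloor$. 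Theorem~\ref{TypeIBlocks_size_geq3_th} then gives $A\rightsquigarrow H_2(-1)^{\oplus\lfloor\rho(A)\rfloor}\oplus C$, where $C$ is, respectively, the empty matrix, $\Gamma_1$, one of $J_2(0)$ or $\Gamma_1^{\oplus 2}$, or $\Gamma_2$. Applying the Elimination law once more to discard the leftover $C$ yields $A\rightsquigarrow H_2(-1)^{\oplus\lfloor\rho(A)\rfloor}$ in every case, which is what was needed.

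Concerning the main obstacle: at this point there is essentially none, since the entire burden of the argument has already been discharged in the supporting Lemmas~\ref{Basics_for_Gamma}, \ref{Gammas_th}, \ref{basic_auxiliar_Gamma4k+2_th} and in Theorem~\ref{TypeIBlocks_size_geq3_th}; the present statement is a short corollary of that machinery. If one wishes to identify where the real difficulty lies, it is in the combinatorial bookkeeping inside Theorem~\ref{TypeIBlocks_size_geq3_th}---grouping the blocks $\Gamma_{4k+\epsilon}$ and $\Gamma_{4k+2+\epsilon}$ four at a time via \eqref{FourBlocks} and \eqref{FourBlocksG4k+2}, and then absorbing the residual blocks $\Gamma_1$, $\Gamma_2$ and $J_2(0)$ through the explicit low-dimensional reductions collected in Lemma~\ref{Basics_for_Gamma}---together with the fact, recorded in Section~\ref{gamma_2.sec}, that the blocks $\Gamma_1$ and $\Gamma_2$ must be excluded from the hypothesis, which is exactly why we require all Type-I blocks to have size at least $3$.
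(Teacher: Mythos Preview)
Your proposal is correct and follows essentially the same route as the paper's own proof: necessity comes from Lemma~\ref{necessary_th}, and sufficiency is obtained by invoking Theorem~\ref{TypeIBlocks_size_geq3_th} to get $A\rightsquigarrow H_2(-1)^{\oplus\lfloor\rho(A)\rfloor}\oplus C$ and then discarding the leftover $C$ via the Elimination law. The paper's argument is slightly more terse but structurally identical.
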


\begin{proof}
The result is a direct consequence of Lemma~\ref{necessary_th} (necessity) and Theorem~\ref{TypeIBlocks_size_geq3_th} (sufficiency). As for the sufficiency, just note that $A\rightsquigarrow H_2(-1)^{\oplus\rho(A)}$ (part (i) in Theorem \ref{TypeIBlocks_size_geq3_th}), or $A\rightsquigarrow H_2(-1)^{\oplus\lfloor\rho(A)\rfloor}\oplus C$, for some $C$ (parts (ii)--(iv) in Theorem \ref{TypeIBlocks_size_geq3_th}). Applying the Elimination law (Lemma \ref{basic_laws_lemma} (iii)), we conclude that $A\rightsquigarrow H_2(-1)^{\oplus \lfloor\rho(A)\rfloor}$, and the Elimination law as well implies $A\rightsquigarrow H_2(-1)^{\oplus m}$, for any integer $m\leq\rho(A)$.
\end{proof}

\section{The condition for consistency is sufficient}\label{sufficiency_sec}

In this section, we prove that the necessary condition of Lemma \ref{necessary_th} is also sufficient when CFC($A$) does not contain blocks of the form $\Gamma_1$ and $\Gamma_2$. This result is included in Theorem \ref{sufficiency.th}, which is the main result of this paper. Before proving it, we state the following lemma, whose proof is straightforward, and that will be used in the proof of Theorem \ref{sufficiency.th}.

\begin{lemma}\label{h2mu+gamma_12_lem}
\begin{enumerate}[{\rm(i)}]
    \item \label{H2+G1^2toH2(-1)}
For $\mu\neq \pm 1$, $H_{2}(\mu)\oplus{\Gamma_{1}}^{\oplus 2} \overset{X_0}{\rightsquigarrow} H_2(-1)$,  with $X_0=
\begin{smat}
 1 & 0 \\
 -1-\mu  & -\frac{2}{\mu -1} \\
 1+\mu  & \frac{1}{\mu -1} \\
 0 & \frac{\mathfrak i}{\mu -1} \\
\end{smat}$.

\item \label{H2+G2toH2(-1)}
For $\mu\neq \pm 1$, $H_{2}(\mu)\oplus{\Gamma_{2}} \overset{X_0}{\rightsquigarrow} H_2(-1)\oplus \Gamma_1$,  with
$X_0=\begin{smat}
 1 & 0 & 1 \\
 -\frac{1}{\mu +1} & 0 & \frac{1}{\mu +1} \\
 0 & 1 & \frac{\mu -1}{\mu +1} \\
 1 & 0 & 0 \\
\end{smat}.$
\end{enumerate}
\end{lemma}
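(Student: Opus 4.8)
The plan is to prove both items by the same elementary route announced in the paper: since an explicit candidate $X_0$ is provided in each case, it suffices to carry out the product $X_0^\top A X_0$ and check it equals the stated right-hand side. First I would record the $2\times2$ shapes of the blocks involved, namely $H_2(\mu)=\left[\begin{smallmatrix}0&1\\\mu&0\end{smallmatrix}\right]$, $\Gamma_1=[1]$, and $\Gamma_2=\left[\begin{smallmatrix}0&-1\\1&1\end{smallmatrix}\right]$ (the last identity, $\Gamma_2=-H_2(-1)+\left[\begin{smallmatrix}0&0\\0&1\end{smallmatrix}\right]$, already having been used in Section~\ref{gamma_2.sec}). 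In item~(i) the coefficient matrix is the $4\times4$ matrix $A=H_2(\mu)\oplus\Gamma_1\oplus\Gamma_1$ and $X_0\in\CC^{4\times2}$; in item~(ii) it is the $4\times4$ matrix $A=H_2(\mu)\oplus\Gamma_2$ and $X_0\in\CC^{4\times3}$. I would also point out at the outset that the hypothesis $\mu\neq\pm1$ is used only to make $X_0$ well defined: its entries contain the factor $(\mu-1)^{-1}$ in item~(i) and $(\mu+1)^{-1}$ in item~(ii).

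For item~(i) I would first compute $AX_0$, observing that multiplication by $A$ merely reshuffles and rescales the rows of $X_0$ (the first row of $AX_0$ is the second row of $X_0$, the second row is $\mu$ times the first row of $X_0$, and the last two rows are unchanged), and then left-multiply by $X_0^\top$. The four entries of the resulting $2\times2$ matrix are then checked by a short computation each: the $(1,1)$ entry collapses to $(-1-\mu)+(-\mu-\mu^2)+(1+\mu)^2=0$, the $(1,2)$ and $(2,1)$ entries reduce to $\pm1$ once the common factor $\mu-1$ cancels, and the $(2,2)$ entry equals $(1+\mathfrak{i}^2)/(\mu-1)^2=0$ --- this last cancellation being exactly why the imaginary unit appears in the last row of $X_0$. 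Hence $X_0^\top AX_0=\left[\begin{smallmatrix}0&1\\-1&0\end{smallmatrix}\right]=H_2(-1)$.

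Item~(ii) is handled in the same way. Forming $AX_0$ (again a reshuffling and rescaling of the rows of $X_0$, now also incorporating the two rows coming from $\Gamma_2$) and then $X_0^\top(AX_0)$, the resulting $3\times3$ matrix is found to split into the leading $2\times2$ skew-symmetric block equal to $H_2(-1)$ and a trailing $1\times1$ block equal to $[1]=\Gamma_1$, all four cross entries between these two blocks vanishing; as before the factors $(\mu+1)^{\pm1}$ cancel against the matching numerators. I do not expect any genuine difficulty here: the only thing requiring care is the bookkeeping in the $4\times2$ and $4\times3$ products, together with checking that every entry which must vanish actually does --- in particular the $(2,2)$ entry of item~(i) (which is what forces the $\mathfrak{i}$) and the decoupling of the $H_2(-1)$ and $\Gamma_1$ parts in item~(ii).
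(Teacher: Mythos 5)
Your proposal is correct and matches the paper's intent exactly: the lemma is stated with explicit solutions $X_0$ and its (omitted, ``straightforward'') proof is precisely the direct verification $X_0^\top A X_0=H_2(-1)$ and $X_0^\top A X_0=H_2(-1)\oplus\Gamma_1$ that you carry out, and your intermediate computations (including the cancellations forced by the factor $\mathfrak i$ in item (i) and by $(\mu+1)^{-1}$ in item (ii)) check out. No changes needed.
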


\begin{theorem}\label{sufficiency.th}
If  $A$ is a complex square matrix whose \rm{CFC} has no blocks of either type $\Gamma_1$ or $\Gamma_2$, then for any  skew-symmetric matrix $B$ the equation $X^\top AX=B$ is consistent if and only if $\rank B\leq  2\rho(A)$. In particular, the equation  $X^\top A X = H_2(-1)^{\oplus m}$ is consistent if and only if $m\leq \rho(A)$. 

\end{theorem}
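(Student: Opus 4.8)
Since the necessity of $\rank B\le 2\rho(A)$ is exactly Theorem~\ref{necessary_coro}, only the sufficiency requires an argument, and the plan is to carry out the two-step strategy sketched around Equations~\eqref{first_step} and~\eqref{second_step}. First I would use the Canonical reduction law and the $J_1(0)$-law (Lemma~\ref{basic_laws_lemma}), together with Lemma~\ref{CFC-Skew_lemma}~\eqref{CFC-Skew_2} (which gives $\mathrm{CFC}(B)=H_2(-1)^{\oplus m}\oplus J_1(0)^{\oplus\ell}$ with $2m=\rank B$), to reduce the full statement to the ``in particular'' one: it suffices to prove that, when $\mathrm{CFC}(A)$ contains no $\Gamma_1$ or $\Gamma_2$ blocks, and (after the $J_1(0)$-law) no $J_1(0)$ blocks either, then $m\le\rho(A)$ implies $A\rightsquigarrow H_2(-1)^{\oplus m}$; as $m$ is an integer this is the same as $m\le\lfloor\rho(A)\rfloor$. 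Then I would split $\mathrm{CFC}(A)=A_0\oplus A_1\oplus A_2$ into its Type-0, Type-I, and Type-II parts, noting that under our hypothesis $A_1$ is a direct sum of Type-I blocks of size at least $3$, and record that $\rho(A)=\rho(A_0)+\rho(A_1)+\rho(A_2)$ by the additivity~\eqref{additive}.

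The first step is Equation~\eqref{first_step}. Applying Theorem~\ref{Js_th} to $A_0$, Theorem~\ref{TypeIBlocks_size_geq3_th} to $A_1$, and Theorem~\ref{Hs.th} to $A_2$, each part satisfies $A_i\rightsquigarrow H_2(-1)^{\oplus\lfloor\rho(A_i)\rfloor}\oplus C_i$, where the leftover $C_i$ has $\rho(C_i)=\rho(A_i)-\lfloor\rho(A_i)\rfloor<1$ and belongs to a short list: $C_0$ is empty or $J_2(0)$; $C_2$ is empty or $H_2(\mu)$ with $\mu\neq 0,\pm1$; and $C_1$ is empty, $\Gamma_1$, $J_2(0)$, $\Gamma_1^{\oplus2}$, or $\Gamma_2$. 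Combining these three relations with the Addition and Transitivity laws yields $A\rightsquigarrow H_2(-1)^{\oplus s}\oplus C_0\oplus C_1\oplus C_2$ with $s=\lfloor\rho(A_0)\rfloor+\lfloor\rho(A_1)\rfloor+\lfloor\rho(A_2)\rfloor$.

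The second step is Equation~\eqref{second_step}, i.e.\ $C_0\oplus C_1\oplus C_2\rightsquigarrow H_2(-1)^{\oplus t}$ with $t=\lfloor\rho(C_0)+\rho(C_1)+\rho(C_2)\rfloor$. Since $\rho(C_0)+\rho(C_1)+\rho(C_2)\le\frac12+\frac34+\frac12=\frac74<2$, we have $t\in\{0,1\}$. If $t=0$, one just deletes all the leftovers with the Elimination law. If $t=1$, one runs through the finitely many triples $(C_0,C_1,C_2)$ whose $\rho$-sum is at least $1$ and, in each, merges a suitable pair of leftover summands into a single $H_2(-1)$ --- for instance $J_2(0)^{\oplus2}\rightsquigarrow H_2(-1)$ (Lemma~\ref{Js_lem}~\eqref{J2^2toH2(-1)}), $H_2(\mu)\oplus J_2(0)\rightsquigarrow H_2(-1)$ (Lemma~\ref{H2k+4toH2k_lem}~\eqref{H2mu+J2toH2(-1)}), $J_2(0)\oplus\Gamma_1^{\oplus2}\rightsquigarrow H_2(-1)$ or $\Gamma_2\oplus J_2(0)\rightsquigarrow H_2(-1)\oplus\Gamma_1$ (Lemma~\ref{basics_G3_G6}), and $H_2(\mu)\oplus\Gamma_1^{\oplus2}\rightsquigarrow H_2(-1)$ or $H_2(\mu)\oplus\Gamma_2\rightsquigarrow H_2(-1)\oplus\Gamma_1$ (Lemma~\ref{h2mu+gamma_12_lem}) --- and then removes whatever small leftover (of $\rho<1$) is still there by the Elimination law. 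Since all these relations are $\rho$-invariant, one has $s+t=\lfloor\rho(A)\rfloor$, so chaining~\eqref{first_step} with~\eqref{second_step} via the Transitivity law gives $A\rightsquigarrow H_2(-1)^{\oplus\lfloor\rho(A)\rfloor}$, and a last application of the Elimination law produces $A\rightsquigarrow H_2(-1)^{\oplus m}$ for every integer $m\le\rho(A)$.

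The main obstacle I expect is the bookkeeping in the second step: one must check that \emph{every} admissible triple of leftovers collapses, in particular those in which $\Gamma_1$ or $\Gamma_2$ appears as a summand. These are precisely the blocks for which the condition fails when they make up the whole of $A$ (see Section~\ref{type2blocksbad.sec}); here they only occur as isolated small pieces, and are rescued by the ad hoc identities of Lemmas~\ref{basics_G3_G6} and~\ref{h2mu+gamma_12_lem}. One also needs to track the floors carefully, so as to be sure that $s+t$ is \emph{exactly} $\lfloor\rho(A)\rfloor$; this follows from $\rho(C_i)=\rho(A_i)-\lfloor\rho(A_i)\rfloor$ together with the $\rho$-invariance of all relations employed.
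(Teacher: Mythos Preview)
Your proposal is correct and follows essentially the same approach as the paper: both reduce to $A\rightsquigarrow H_2(-1)^{\oplus\lfloor\rho(A)\rfloor}$, split $A=A_0\oplus A_1\oplus A_2$ by block type, invoke Theorems~\ref{Js_th}, \ref{Hs.th}, and \ref{TypeIBlocks_size_geq3_th} to produce the leftovers $C_i$, and then merge them with the same collection of ad hoc identities (Lemmas~\ref{Js_lem}~\eqref{J2^2toH2(-1)}, \ref{H2k+4toH2k_lem}~\eqref{H2mu+J2toH2(-1)}, \ref{basics_G3_G6}, \ref{h2mu+gamma_12_lem}). The only difference is organizational: the paper enumerates the subcase combinations 1(a)/(b), 2(a)/(b), 3(a)--(d) explicitly, whereas you sort them by whether $t=\lfloor\rho(C_0)+\rho(C_1)+\rho(C_2)\rfloor$ is $0$ or $1$; the two listings cover exactly the same finite set of triples.
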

\begin{proof}
Let us start by proving  that  $X^\top A X = H_2(-1)^{\oplus m}$ is consistent if and only if $m\leq \rho(A)$. By Lemma \ref{necessary_th}, the  condition $m\leq \rho(A)$ is necessary. For the sufficiency  it is enough  to show that $A \rightsquigarrow H_2(-1)^{\oplus m}$, with $m=\lfloor\rho(A)\rfloor$, since for a smaller $m$ the result will be a consequence of the Elimination law. 

The CFC of $A$ is of the form $A_0\oplus A_1\oplus A_2$, where 
\begin{itemize}
    \item $A_0$ is an $n_0\times n_0$ direct sum of Type-$0$ blocks; 
    \item $A_1$ is an $n_1\times n_1$ direct sum of Type-I blocks with size $\geq3$;
    \item $A_2$ is an $n_2\times n_2$ direct sum of Type-II blocks.
\end{itemize}
Note that some of $n_0,n_1$, and $n_2$ can be zero. Note also that, by \eqref{additive},
\begin{equation}\label{rhosum}
\rho(A)=\rho(A_0) \oplus \rho(A_1) \oplus \rho(A_2).
\end{equation}
According to, respectively, Theorems~\ref{Js_th}, \ref{Hs.th}, and \ref{TypeIBlocks_size_geq3_th}, we have the following possibilities for $\rho(A_0)$, $\rho(A_2)$, and $\rho(A_1)$:
\begin{enumerate}
\item \begin{enumerate}
    \item $\rho(A_0)$ is an integer and $A_0\rightsquigarrow H_2(-1)^{\oplus\rho(A_0)}$.
    \item $\rho(A_0)-\frac12$ is an integer and $A_0\rightsquigarrow H_2(-1)^{\oplus\rho(A_0)-\frac12}\oplus J_2(0)$.
\end{enumerate}
\item \begin{enumerate}
    \item $\rho(A_2)$ is an integer and $A_2\rightsquigarrow H_2(-1)^{\oplus\rho(A_2)}$.
    \item $\rho(A_2)-\frac12$ is an integer and $A_2\rightsquigarrow H_2(-1)^{\oplus\rho(A_2)-\frac12}\oplus H_2(\mu)$, where $\mu \neq \pm1,0$.
\end{enumerate}  
\item \begin{enumerate}
    \item $\rho(A_1)$ is an integer and $A_1\rightsquigarrow H_2(-1)^{\oplus\rho(A_1)}$.
   \item $\rho(A_1)-\frac14$ is an integer and $A_1\rightsquigarrow H_2(-1)^{\oplus\rho(A_1)-\frac14}\oplus\Gamma_1$.
    \item $\rho(A_1)-\frac12$ is an integer and $A_1\rightsquigarrow H_2(-1)^{\oplus\rho(A_1)-\frac12}\oplus J_2(0)$ or $A_1\rightsquigarrow H_2(-1)^{\oplus\rho(A_1)-\frac12}\oplus \Gamma_1^{\oplus2}$.
    \item $\rho(A_1)-\frac34$ is an integer and $A_1\rightsquigarrow H_2(-1)^{\oplus\rho(A_1)-\frac34}\oplus \Gamma_2$.
\end{enumerate}    
\end{enumerate}

Combining these possibilities the following two cases arise:

\begin{enumerate}[{\tt Case} 1.] 
    \item\label{rho.case1} $0\leq \rho(A)-\lfloor\rho(A_0)\rfloor-\lfloor\rho(A_1)\rfloor-\lfloor\rho(A_2)\rfloor<1$. In this case, $\lfloor\rho(A)\rfloor=\lfloor\rho(A_0)\rfloor+\lfloor\rho(A_1)\rfloor+\lfloor\rho(A_2)\rfloor$. Then 
    $$
    A=A_0 \oplus A_1 \oplus A_2 \rightsquigarrow 
    H_2(-1)^{\oplus\lfloor\rho(A_0)\rfloor}\oplus H_2(-1)^{\oplus\lfloor\rho(A_1)\rfloor}\oplus H_2(-1)^{\oplus\lfloor\rho(A_2)\rfloor}
    =H_2(-1)^{\oplus\lfloor\rho(A)\rfloor}.
    $$
    Note that this case happens when either 1(a)+2(a)+3(a)--(d), or 1(a)+2(b)+3(a)--(b), or 1(b)+2(a)+3(a)--(b) above hold.
    
    \item\label{rho.case2} $1\leq \rho(A)-\lfloor\rho(A_0)\rfloor-\lfloor\rho(A_1)\rfloor-\lfloor\rho(A_2)\rfloor<2$. In this case, $\lfloor\rho(A)\rfloor-1=\lfloor\rho(A_0)\rfloor+\lfloor\rho(A_1)\rfloor+\lfloor\rho(A_2)\rfloor$. Then several subcases arise:
    \begin{itemize}
        \item 1(a)+2(b)+3(c): In this case, either 
        $$
         A=A_0 \oplus A_1 \oplus A_2 \rightsquigarrow 
    H_2(-1)^{\oplus\lfloor\rho(A)\rfloor-1}\oplus
    H_2(\mu)\oplus J_2(0)\rightsquigarrow 
    H_2(-1)^{\oplus\lfloor\rho(A)\rfloor},
    $$
    where the last step is a consequence of Lemma~\ref{H2k+4toH2k_lem} \eqref{H2mu+J2toH2(-1)}, or
    $$
    A=A_0 \oplus A_1 \oplus A_2 \rightsquigarrow 
     H_2(-1)^{\oplus\lfloor\rho(A)\rfloor-1}\oplus H_2(\mu)\oplus\Gamma_1^{\oplus2}\rightsquigarrow H_2(-1)^{\oplus\lfloor\rho(A)\rfloor},
    $$
    where the last step is a consequence of Lemma \ref{h2mu+gamma_12_lem} \eqref{H2+G1^2toH2(-1)}.
        \item 1(a)+2(b)+3(d): In this case,  either
        $$\begin{array}{ccl}
    A=A_0 \oplus A_1 \oplus A_2& \rightsquigarrow &
     H_2(-1)^{\oplus\lfloor\rho(A)\rfloor-1}\oplus H_2(\mu)\oplus\Gamma_2\rightsquigarrow H_2(-1)^{\oplus\lfloor\rho(A)\rfloor}\oplus \Gamma_1\\& \rightsquigarrow& H_2(-1)^{\oplus\lfloor\rho(A)\rfloor},
     \end{array}
    $$
    where the second step is a consequence of Lemma \ref{h2mu+gamma_12_lem} \eqref{H2+G2toH2(-1)}.
    
        \item 1(b)+2(a)+3(c): In this case, either
        $$
        A=A_0 \oplus A_1 \oplus A_2 \rightsquigarrow 
    H_2(-1)^{\oplus\lfloor\rho(A)\rfloor-1}\oplus J_2(0)^{\oplus2}  
    \rightsquigarrow 
    H_2(-1)^{\oplus\lfloor\rho(A)\rfloor},
    $$
    where the last step is a consequence of Lemma \ref{Js_lem}~\eqref{J2^2toH2(-1)},  or
    $$
    A=A_0 \oplus A_1 \oplus A_2 \rightsquigarrow 
     H_2(-1)^{\oplus\lfloor\rho(A)\rfloor-1}\oplus J_2(0)\oplus\Gamma_1^{\oplus2}\rightsquigarrow H_2(-1)^{\oplus\lfloor\rho(A)\rfloor},
    $$
    where the last step is a consequence of Lemma \ref{Basics_for_Gamma} \eqref{J2+G1^2toH2(-1)}.
    
    \item 1(b)+2(a)+3(d): In this case, either
    $$
        A=A_0 \oplus A_1 \oplus A_2 \rightsquigarrow 
    H_2(-1)^{\oplus\lfloor\rho(A)\rfloor-1}\oplus J_2(0)\oplus\Gamma_2
    \rightsquigarrow 
    H_2(-1)^{\oplus\lfloor\rho(A)\rfloor},
    $$
    where the last step is a consequence of Lemma \ref{basics_G3_G6} \eqref{G2+J2toH2+G1}.
    
    \item 1(b)+2(b)+3(a)--(d): In these cases, we have
    $$
    \begin{array}{ccl}
         A=A_0 \oplus A_1 \oplus A_2 &\rightsquigarrow &
    H_2(-1)^{\oplus\lfloor\rho(A)\rfloor-1}\oplus
    J_2(0)\oplus H_2(\mu)\oplus C\\&\rightsquigarrow &
    H_2(-1)^{\oplus\lfloor\rho(A)\rfloor-1}\oplus
    J_2(0)\oplus H_2(\mu)\rightsquigarrow
    H_2(-1)^{\oplus\lfloor\rho(A)\rfloor},
    \end{array}
    $$
    where $C$ is the corresponding leftover from cases 3(a)-(d), and in the second step we remove $C$ by using the Elimination law. The last step is a consequence of Lemma~\ref{H2k+4toH2k_lem} \eqref{H2mu+J2toH2(-1)}.
    \end{itemize}
\end{enumerate}

Now we consider the general case where $B$ is any skew-symmetric matrix. Assume first that $B$ is invertible. According to Lemma~\ref{CFC-Skew_lemma},  ${\rm CFC}(B)=H_2(-1)^{\oplus m}$ with $2m=\rank B$.  By the Canonical reduction law, Lemma \ref{basic_laws_lemma} (\ref{reduction_law}),  $A \rightsquigarrow B$  if and only if $A  \rightsquigarrow H_2(-1)^{\oplus m}$. We have  seen  that $A  \rightsquigarrow H_2(-1)^{\oplus m}$ if and only if $ m\leq\rho(A)$. And  $ m\leq\rho(A)$ if and only if $\rank B\leq  2\rho(A)$. 

If $B$ is singular, then   ${\rm CFC}(B)=H_2(-1)^{\oplus  m}\oplus0_{\ell}$ with  $2 m=\rank B$. By the $J_1(0)$-law, $A\rightsquigarrow H_2(-1)^{\oplus  m}\oplus0_{\ell}$ if and only if $A\rightsquigarrow H_2(-1)^{\oplus m}$. As we have seen above, this holds if and only if $m\leq\rho(A)$. 
\end{proof}



In terms of bilinear forms, as explained in the Abstract and the Introduction, Theorem \ref{sufficiency.th} can be rewritten in the following way.

\begin{theorem}\label{bilinear.th}
Let $\mathbb A$ be a bilinear form on $\CC^n$, and $A\in\CC^{n\times n}$ be some matrix representation of $\mathbb A$. Then, the largest dimension of a subspace of $\CC^n$ such that the restriction of $\mathbb A$ to this subspace is a non-degenerate skew-symmetric bilinear form is $2\lfloor\rho(A)\rfloor$, where $\rho(A)$ is the quantity in \eqref{rho(A)_formula}. 
\end{theorem}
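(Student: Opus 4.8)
The plan is to translate Theorem~\ref{bilinear.th} directly into the language of the matrix equation \eqref{maineq} and then invoke Theorem~\ref{sufficiency.th}. Recall from the Introduction that if $X\in\CC^{n\times m}$ has columns forming a basis of an $m$-dimensional subspace $\cal V$, then the restriction of $\mathbb A$ to $\cal V$ is non-degenerate and skew-symmetric precisely when $B:=X^\top AX$ is skew-symmetric and invertible. Conversely, if $X^\top AX=B$ with $B$ skew-symmetric and invertible, then $X$ must have full column rank $m$ (since $B$ is invertible), so its columns span an $m$-dimensional subspace on which $\mathbb A$ restricts to a non-degenerate skew-symmetric form. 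Hence the largest dimension $d$ we are after equals the largest $m$ for which there exists an invertible skew-symmetric $B\in\CC^{m\times m}$ with $A\rightsquigarrow B$.

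First I would recall (Remark~\ref{evenrank_rem}, or Lemma~\ref{CFC-Skew_lemma}~\eqref{CFC-Skew_2}) that every invertible skew-symmetric matrix has even size, and that by the Canonical reduction law (Lemma~\ref{basic_laws_lemma}~\eqref{reduction_law}) we may replace any such $B$ of size $2m$ by its CFC, which is $H_2(-1)^{\oplus m}$. Thus the quantity $d$ equals $2\,m_{\max}$, where $m_{\max}$ is the largest $m$ such that $A\rightsquigarrow H_2(-1)^{\oplus m}$ is consistent. Next I would note that the matrix representation $A$ is only defined up to congruence (a change of basis $P$ replaces $A$ by $P^\top AP$), and both $\rho(A)$ and the consistency of \eqref{maineq} are congruence-invariant (by \eqref{additive} together with the uniqueness of the CFC, and by Lemma~\ref{basic_laws_lemma}~\eqref{reduction_law}), so the statement is well-posed and we may assume $A$ is in CFC.

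The core step is then to identify $m_{\max}=\lfloor\rho(A)\rfloor$. For the general matrix $A$ treated in Theorem~\ref{sufficiency.th} (CFC with no $\Gamma_1$ or $\Gamma_2$ blocks), that theorem gives exactly this: $A\rightsquigarrow H_2(-1)^{\oplus m}$ iff $m\le\rho(A)$, i.e.\ iff $m\le\lfloor\rho(A)\rfloor$ since $m$ is an integer. Therefore $m_{\max}=\lfloor\rho(A)\rfloor$ and $d=2\lfloor\rho(A)\rfloor$. The main (and essentially only) subtlety I anticipate is whether Theorem~\ref{bilinear.th} is meant to hold for \emph{arbitrary} $A$ or only for the $A$ covered by Theorem~\ref{sufficiency.th}; if arbitrary, one must be careful, because when $\Gamma_1$ or $\Gamma_2$ blocks are present the equality $m_{\max}=\lfloor\rho(A)\rfloor$ can fail (witness $\Gamma_2^{\oplus 4}$ in Section~\ref{type2blocksbad.sec}), so the clean formula $d=2\lfloor\rho(A)\rfloor$ should be stated under the same hypothesis as Theorem~\ref{sufficiency.th}. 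Under that hypothesis the proof is just the chain of reductions above, with the real work already done in Theorem~\ref{sufficiency.th}; no further computation is needed.
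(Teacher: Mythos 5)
Your proposal is correct and is essentially the paper's own argument: the paper gives no separate proof, presenting Theorem~\ref{bilinear.th} simply as the bilinear-form restatement of Theorem~\ref{sufficiency.th} via the translation in the Introduction, the reduction of an invertible skew-symmetric $B$ to $H_2(-1)^{\oplus m}$ (Lemma~\ref{CFC-Skew_lemma} and the Canonical reduction law), and the identification $m_{\max}=\lfloor\rho(A)\rfloor$. Your caveat about $\Gamma_1$ and $\Gamma_2$ blocks is well taken: since the statement is meant as a rewriting of Theorem~\ref{sufficiency.th}, it should be read under the same hypothesis on ${\rm CFC}(A)$, as otherwise Theorem~\ref{gamma2toh2(-1).th} (e.g.\ $A=\Gamma_2^{\oplus 4}$) shows the maximal dimension can be strictly smaller than $2\lfloor\rho(A)\rfloor$.
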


\section{The generic case}\label{generic.sec}

The generic CFC of complex $n\times n$ matrices is the following  (see \cite[Th. 4]{dd11}):
\begin{equation}\label{generic-cfc}
{\rm CFC}_g(n):=\left\{\begin{array}{lc}
    H_2(\mu_1)\oplus\cdots \oplus H_2(\mu_k), & \mbox{\rm if $n=2k$,} \\
     H_2(\mu_1)\oplus \cdots\oplus H_2(\mu_k)\oplus \Gamma_1, & \mbox{\rm if $n=2k+1$,}
\end{array}\right.
\end{equation}
with $\mu_1,\hdots,\mu_k$ being different complex numbers, and all different to $\mu_1^{-1},\hdots,\mu_k^{-1}$ and to $\pm1$ as well.

The following result, which is almost an immediate consequence of Theorem \ref{sufficiency.th}, provides a characterization for Eq. \eqref{maineq} to be consistent for a complex matrix $A$ whose CFC is the one provided in \eqref{generic-cfc}. 

\begin{corollary}
Let $A\in\mathbb C^{n\times n}$ be such that {\rm CFC}$(A)={\rm CFC}_g(n)$ as in \eqref{generic-cfc}, and let $B$ be a skew-symmetric matrix. Then $X^\top AX=B$ is consistent if and only if $\rank B\leq n/2$.
\end{corollary}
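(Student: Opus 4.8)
The plan is to read off the value of $\rho(A)$ for the generic canonical form and then invoke Theorem~\ref{sufficiency.th}, with a small extra step in the odd case because $\mathrm{CFC}_g(n)$ contains the block $\Gamma_1$ when $n$ is odd, which is exactly one of the blocks excluded in Theorem~\ref{sufficiency.th}.

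First I would compute $\rho(A)$. By the additive property \eqref{additive} together with Remark~\ref{rho_values}, each Type-II block $H_2(\mu_i)$ with $\mu_i\neq 0,\pm1$ contributes $\rho(H_2(\mu_i))=\tfrac12$, and the block $\Gamma_1$ (present exactly once, when $n$ is odd) contributes $\rho(\Gamma_1)=\tfrac14$. Hence $\rho(A)=k/2=n/4$ if $n=2k$, and $\rho(A)=k/2+\tfrac14=n/4$ if $n=2k+1$; in both cases $2\rho(A)=n/2$. The necessity of the condition $\rank B\leq n/2$ is then immediate from Theorem~\ref{necessary_coro}.

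For sufficiency I would split into two cases. If $n=2k$, then $\mathrm{CFC}(A)$ is a direct sum of Type-II blocks with no $\Gamma_1$ or $\Gamma_2$ block, so Theorem~\ref{sufficiency.th} applies verbatim and gives that $X^\top AX=B$ is consistent whenever $\rank B\leq 2\rho(A)=n/2$. If $n=2k+1$, I would write $\mathrm{CFC}(A)=A'\oplus\Gamma_1$ with $A'=\bigoplus_{i=1}^{k}H_2(\mu_i)$ and apply Theorem~\ref{Hs.th} (equivalently, Theorem~\ref{sufficient-typeII.th}) to obtain $A'\rightsquigarrow H_2(-1)^{\oplus\lfloor\rho(A')\rfloor}\oplus C'$, where $C'$ is empty when $k$ is even and $C'=H_2(\nu)$ for some $\nu\neq 0,\pm1$ when $k$ is odd. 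Adding the $\Gamma_1$ block and using the Addition law yields $A\rightsquigarrow H_2(-1)^{\oplus\lfloor\rho(A')\rfloor}\oplus C'\oplus\Gamma_1$; since $\rho(A)=\rho(A')+\tfrac14$ one checks directly that $\lfloor\rho(A')\rfloor=\lfloor\rho(A)\rfloor$ (both equal $t$ when $k=2t$ or $k=2t+1$), so the Elimination law discards $C'\oplus\Gamma_1$ and gives $A\rightsquigarrow H_2(-1)^{\oplus\lfloor\rho(A)\rfloor}$. A further use of the Elimination law then covers every smaller exponent, so $A\rightsquigarrow H_2(-1)^{\oplus m}$ for all integers $m\leq\rho(A)$.

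Finally I would translate back to an arbitrary skew-symmetric $B$ exactly as in the last paragraph of the proof of Theorem~\ref{sufficiency.th}: by Lemma~\ref{CFC-Skew_lemma} the CFC of $B$ is $H_2(-1)^{\oplus m}\oplus 0_\ell$ with $2m=\rank B$, and the Canonical reduction law together with the $J_1(0)$-law reduce consistency of $X^\top AX=B$ to consistency of $X^\top AX=H_2(-1)^{\oplus m}$, which holds if and only if $m\leq\rho(A)=n/4$, i.e. if and only if $\rank B\leq n/2$ (recall $\rank B$ is even). The only mild obstacle is the odd case, where $\mathrm{CFC}_g(n)$ contains the ``forbidden'' block $\Gamma_1$; this is harmless precisely because a single $\Gamma_1$ contributes only $\tfrac14$ to $\rho$, so it (together with any Type-II leftover) is absorbed by the floor and removed through the Elimination law, and no appeal to the problematic multi-$\Gamma_1$ situation of Section~\ref{type2blocksbad.sec} is needed.
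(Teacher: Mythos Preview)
Your proposal is correct and follows essentially the same approach as the paper: compute $\rho(A)=n/4$, invoke Theorem~\ref{necessary_coro} for necessity, apply Theorem~\ref{sufficiency.th} directly in the even case, and in the odd case strip off the single $\Gamma_1$ block, handle the remaining Type-II direct sum, and recover it via the Elimination law. The only cosmetic difference is that the paper appeals to Theorem~\ref{sufficiency.th} for the Type-II part $\widetilde A$ in the odd case (getting $\widetilde A\rightsquigarrow H_2(-1)^{\oplus\lfloor k/2\rfloor}$ in one stroke), whereas you go through Theorem~\ref{Hs.th} and explicitly track the leftover $C'$ before discarding it; both routes are equivalent.
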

\begin{proof}
The case where $n=2k$ in \eqref{generic-cfc} is a particular case of Theorem \ref{sufficiency.th} (note that $\rho(A)=n/4$). 

When $n=2k+1$, Eq. \eqref{generic-cfc} contains a block $\Gamma_1$, which is excluded from the statement of Theorem \ref{sufficiency.th}. However, in this case, $\rho(A)=(2k+1)/4$, so Theorem~\ref{necessary_coro} implies that $\rank B\leq 2\rho(A)= (2k+1)/2=n/2$ is a necessary condition for $A\rightsquigarrow B$. 

If we set $\widetilde A:=H_2(\mu_1)\oplus\cdots\oplus H_2(\mu_k)$, then Theorem \ref{sufficiency.th} implies that $\widetilde A\rightsquigarrow H_2(-1)^{\oplus\frac{k}{2}}$, and the Elimination law (Lemma \ref{basic_laws_lemma}~\eqref{elim_law}) implies that $\widetilde A\oplus\Gamma_1\rightsquigarrow H_2(-1)^{\oplus\frac{k}{2}}$. Therefore, the condition $\rank B\leq n/2$ is also sufficient, since $n/2=k+1/2$ in this case.
\end{proof}

\section{Conclusions and open problems}\label{conclusion.sec}

We have analyzed the consistency of the matrix equation $X^\top AX=B$, for $A\in\CC^{n\times n}$ being a general  matrix, and $B\in\CC^{m\times m}$ being skew-symmetric. In particular, we have obtained a necessary condition for this equation to be consistent. The condition depends on the Canonical Form for Congruence (CFC) of $A$. We have proved that, if the CFC of $A$ does not contain canonical Type-I blocks with size either $1$ or $2$, then this condition is also sufficient. In particular, we have established, for a given $n$, the largest value of $\rank B$ such that the equation is consistent (Theorem \ref{sufficiency.th}). When $A$ is viewed as the matrix of a bilinear form over $\CC^n$, this is equivalent to determining the largest dimension of a subspace of $\CC^n$ such that the restriction of $A$ to this subspace is skew-symmetric.

As a natural continuation of the present work, several lines of research arise:

\begin{itemize}
    \item To address the case where the CFC of $A$ may also contain Type-I blocks with sizes $1$ or $2$. This case deserves a more detailed analysis since, as we have seen, the necessary condition we have obtained for $X^\top AX=B$ to be consistent is not sufficient anymore.
    
    \item To address the consistency of the matrix equation $X^*AX=B$, with $B$ being skew-Hermitian. This will require to use the canonical form for $*$-congruence instead (with $*$ denoting the conjugate transpose).
    
    \item To address the characterization of the consistency of Eq. \eqref{maineq} for $A$ and $B$ being arbitrary square matrices. For this, a natural approach (though problematic, as we will see) is to decompose both $A$ and $B$ in their symmetric and skew-symmetric parts, say $A=A_{\rm sym}+A_{\rm skew}$ and $B=B_{\rm sym}+B_{\rm skew}$. Then if Eq. \eqref{maineq} is consistent, by equating the symmetric and skew-symmetric parts, we end up with $X^\top A_{\rm sym}X=B_{\rm sym}$ and $X^\top A_{\rm skew}X=B_{\rm skew}$, whose right-hand sides are symmetric and skew-symmetric, respectively, so we could apply to them the results in \cite{bcd} and the present paper (respectively). This approach, however, presents relevant obstacles. First, in order to derive a necessary condition for Eq. \eqref{maineq} to be consistent from the necessary condition in this paper (Lemma \ref{necessary_th}), together with the one in \cite{bcd} (namely Theorem 2 in that reference), we should be able to recover the CFC of $A$ from that of $A_{\rm sym}$ and $A_{\rm skew}$. Moreover, we cannot guarantee that the sufficient conditions provided in this paper (namely Theorem \ref{sufficiency.th}) and in \cite{bcd} (namely, Theorem 8 in that reference) are sufficient for Eq. \eqref{maineq} to be consistent for arbitrary $A$ and $B$, since the same $X$ should be a solution for both the symmetric and skew-symmetric parts.
\end{itemize}

\bigskip

\noindent{{\bf Acknowledgments.} This research has been funded by the {\em Agencia Estatal de Investigaci\'on} of Spain through grants PID2019-106362GB-I00/AEI/10.13039/501100011033 and MTM2017-90682-REDT.}

\end{document}